\newtheorem{thm}{Theorem}[section]
\newtheorem{lem}[thm]{Lemma}
\newtheorem{prop}[thm]{Proposition}
\newtheorem{propx}{Proposition}
\DeclareMathOperator{\id}{id}
\newcommand{\R}{\mathbb{R}}
\newcommand{\Z}{\mathbb{Z}} 
\newcommand{\C}{\mathbb{C}} 
\newcommand{\T}{\mathbb{T}} 
\newcommand{\N}{\mathbb{N}}
\newcommand{\td}{\mathbb{T}^d}
\newcommand{\dom}{D_{r, s}}
\newcommand{\initialmod}{D_{r_0, s_0}}
\newcommand{\Avg}{\mathcal{M}_q}
\newcommand{\pw}{\partial_\omega}
\newcommand{\step}{5}
\newcommand{\hone}{\hspace{1cm}}
\newcommand{\dep}{d$, $l$, $r_0$, $s_0$, $\gamma$, $\tau$, $M_0$, $Q_0$, $h_0}
\newcommand{\depp}{d, l, r_0, s_0, \gamma, \tau, M_0, Q_0, h_0}
\newcommand{\deppp}{d, l, r_0, s_0, \gamma, \tau, M_0, h_0}
\newcommand{\firstop}{\mathcal{L}^1}
\newcommand{\secondop}{\mathcal{L}^2}
\newcommand{\thirdop}{\mathcal{L}^3}
\newcommand{\rsnorm}{2, r, s}
\newcommand{\rspnorm}{2, r^+, s^+}
\newcommand{\rsnnorm}{2, r_n, s_n}
\newcommand{\rsnpnorm}{2, r_{n +1}, s_{n +1}}
\newcommand{\polk}{\mathcal{T}^K}
\newcommand{\polkz}{\mathcal{T}^K_0}
\newcommand{\normalrs}{\mathcal{N}_{r, s}}
\newcommand{\normalrszero}{\mathcal{N}_{r_0, s_0}}
\newcommand{\normalrsn}{\mathcal{N}_{r_n, s_n}}
\newcommand{\normalrsnp}{\mathcal{N}_{r_{n + 1}, s_{n + 1}}}
\newcommand{\normalrsp}{\mathcal{N}_{r^+, s^+}}
\newcommand{\DCd}{\textup{DC}_d(\gamma, \tau)}
\begin{document}

\author{Frank Trujillo}
\date{}
\address{Institut für Mathematik, Universität Zürich, Winterthurerstrasse 190, CH-8057 Zürich, Switzerland}
\email{frank.trujillo@math.uzh.ch}

\title[Surviving Lower Dimensional Tori of a Resonant Torus]{Surviving Lower Dimensional Tori of an invariant Resonant Torus with any Number of Resonances}

\begin{abstract}
We provide sufficient conditions on integrable analytic Hamiltonians that guarantee the existence, under arbitrary sufficiently small analytic perturbations, of invariant lower dimensional tori associated to an invariant resonant torus of the unperturbed Hamiltonian. 
\end{abstract}

\maketitle

\section{Introduction} 
Let $H$ be a real analytic Hamiltonian over $\T^m \times \R^m$ of the form
\begin{equation}
\label{eq: initial_system}
 H(\theta, I) = N(I) + f(\theta, I).
 \end{equation}
Its associated \textit{Hamiltonian system} is given by
 \begin{equation}
 \label{eq: initial_equations}
 \left\{ \begin{array}{l}
 \dot{\theta} = \nabla N (I) + \partial_I f(\theta, I), \\
 \dot{I} = - \partial_\theta f(\theta, I), \\
 \end{array}\right.
 \end{equation}
and its solution, which we denote by $\Psi_H^t$, is called the \textit{Hamiltonian flow} of $H$. We refer to the coordinates $\theta$ and $I$ as \textit{angle} and \textit{action} variables and to $m$ as  the number of \textit{degrees of freedom} of the system.

For $f \equiv 0,$ the system above is \textit{integrable}, that is, its phase space is completely foliated by invariant tori whose restricted dynamics is given by (or more generally, smoothly conjugated to) translations. 
Indeed, if $f \equiv 0,$ the associated flow is given by
 \[ \Psi_H^t (\theta, I) = \left(R^t_{\nabla N(I)}(\theta), I\right) \text{ for all } t \in \R,\] 
 where for any $\omega \in \R^m$, $R^t_\omega$ denotes the continuous translation by $\omega$ on $\T^m$.
Hamiltonians of the form (\ref{eq: initial_system}) are sometimes called \textit{near-integrable} whenever one considers $f$ as a small perturbation of the integrable Hamiltonian $N$. 

A classical question in dynamical systems concerns the persistence of invariant structures for a given system after sufficiently small perturbations. In the context of near-integrable Hamiltonians, whenever 
 the Hessian matrix $D^2 N(I_0)$ is non-singular and the vector $\omega_0 = \nabla N(I_0)$ is \textit{Diophantine}, i.e. verifies a condition of the form 
\begin{equation*} 
\label{Diophantine}
 \left| \langle \omega_0 , k \rangle \right| \geq \dfrac{\gamma}{|k|^{\tau}} \hskip0.5cm \text{ for all } k \in \Z^m \,\setminus\, \{0\},
 \end{equation*}
for some $\gamma, \tau > 0$, the classical KAM theorem guarantees the \textit{persistence} of the invariant torus $T_{I_0} = \T^m \times \{ I_0 \}$ for sufficiently small perturbations of $N$, that is, provided $f$ is sufficiently small, the perturbed system $H$ admits an invariant 
torus, close to $T_{I_0}$, whose induced dynamics can be smoothly conjugated to $R^t_{\omega_0}$. 

On the other hand, if  $\omega_0$ is \textit{resonant}, i.e. if there exists $k \in \Z^m \, \setminus \, \{0\}$ such that $\langle \omega_0, k \rangle = 0,$ the classical KAM theorem cannot be applied. In this case, the \textit{resonant invariat torus} $T_{I_0}$ is completely foliated by invariant tori (for the unperturbed system) of positive codimension $l$ with respect to $T_{I_0}$,  whose restricted dynamics is conjugated to a translation by a non-resonant vector $\omega \in \R^{m - l}$.  The codimension $l$ is equal to the maximum number of linearly independent \textit{resonances} of $\omega_0$, where by resonance we mean any integer vector $k \in \Z^m \, \setminus \, \{0\}$  verifying $\langle \omega_0, k \rangle = 0.$ In this case, we say  that $\omega_0$ has exactly $l$ resonances.  

In this resonant setting, not only the hypotheses of the classical KAM theorem are not satisfied but, in general, the resonant invariant torus, or equivalently, the associated collection of \textit{lower dimensional invariant tori}, where use the term \textit{lower dimensional} to emphasize the fact that these invariant tori have dimension is smaller than the number of degrees of freedom of the system, does not persist under small perturbations. 

Nevertheless, invariant lower dimensional tori with the same dynamics as that of the ones in the invariant foliation of a resonant torus might still be found in the perturbed system.  Most of the existing results in this direction deal with \textit{generic} perturbations and hold for resonant vectors with any number of resonances \cite{treshchev_mechanism_1991}, \cite{eliasson_biasymptotic_1994} \cite{cheng_surviving_1999}.  However, similar results for \textit{arbitrary} perturbations are only available when the resonant vector has exactly $1$  or $m - 1$ resonances \cite{bernstein_birkhoff_1987}, \cite{cheng_birkhoff-kolmogorov-arnold-moser_1996}, \cite{cheng_surviving_1999}, \cite{plotnikov_kolmogorovs_2011}, \cite{corsi_lower-dimensional_2013}. 

The aim of this work is to fill this gap, by proving a positive result in the case of resonant tori with any number of resonances.
 
 \subsection{Perturbation of a resonant torus}
 \label{sc: example} 
 
 Let us start by illustrating the situation in the following example.  Fix $\omega_0 = (\omega, 0) \in \R^d \times \R^l = \R^m$, with $\omega \in \R^d$ non-resonant, and consider the integrable Hamiltonian
 \begin{equation}
 \label{eq: example}
 N(p, y) = \langle \omega, p \rangle - \frac{1}{2}|p|^2 + \frac{1}{2}|y|^2,
 \end{equation}
 and a perturbation of the form
  \[ f(q, x, p, y) = f(x),\]
in (\ref{eq: initial_system}), where we denote $(\theta, I)$  by  $(q, x, p, y) \in \T^d \times \T^l \times \R^d \times \R^l$. Then, for $H = N + f$, the system in (\ref{eq: initial_equations}) can be decoupled as
 \[  \left\{ \begin{array}{l}
 \dot{q} = \omega - p, \\
  \dot{p} = 0, \\
 \end{array}\right. \hskip1cm   \left\{ \begin{array}{l}
 \dot{x} =  y, \\
 \dot{y} = -\nabla f(x). \\
 \end{array}\right. \] 
For the unperturbed system, the torus $\T^{m} \times \{0\}$ is invariant and foliated by invariant tori of the form $\T^d \times \{(x, 0, 0)\}$, whose restricted dynamics are given by $R^t_\omega.$ For the perturbed system, any critical point $x_0 \in \T^l$ of $f$ defines an invariant torus of the form $\T^d \times \{(x_0, 0, 0)\}$, whose restricted dynamics is also given by $R^t_\omega.$ Moreover, if we suppose that $f$ admits a non-degenerate minimum at $x_0 \in \T^l$, the associated invariant lower dimensional invariant torus is of \textit{hyperbolic type} and therefore it is an isolated invariant torus. Using this fact and the particular form of the perturbation, it is easy to show that the perturbed system does not admit a collection of invariant tori as in the unperturbed system.

Given a Hamiltonian $H: \T^d \times \R^d \times \R^{l} \times \R^l \rightarrow \R$ of the form $H(q, x, p, y) = \langle \omega, p \rangle + O^2(x, p, y)$, we say that the invariant torus $\T^d \times \{(0, 0, 0)\}$ is of \textit{hyperbolic type} if, after a suitable symplectic change of coordinates, we can express $H$ as
\[ H(q, x, p, y) = \langle \omega, p \rangle + \frac{1}{2} \langle  M p, p \rangle + \langle \Omega x, y \rangle + O^3(x, p, y),\]
where $M$ is a symmetric, invertible matrix and $\Omega$ is a positive definite symmetric matrix.


Let us point out that the existence of lower dimensional invariant tori as in the previous example (although not necessarily of hyperbolic type) can be proven for general integrable Hamiltonians $N(p, y)$ and perturbations of the form $f(x, p, y)$. A proof of this fact can be found in \cite{cheng_birkhoff-kolmogorov-arnold-moser_1996}. In particular, this shows that for a general perturbation $f(q, x, p, y),$ the \textit{averaged perturbed system} $ N(p,y) + \Avg f(x, p, y)$, where
\[ \Avg f(x, p, y) = \int_{\T^d} f(q, x, p, y) dq,\]
admits such invariant lower dimensional tori. Averaged systems are fundamental importance in perturbation theory for near-integrable dynamical systems, since they often provide good approximations of the initial system. We refer the interested reader to \cite[Chapter 10]{arnold_mathematical_2007}.

\subsection{Previous results}
\label{sc: previous_results}

Although resonant invariant tori tend to disappear for general perturbations, the remnants of the associated foliations are not completely understood. As illustrated in the previous example, it is sometimes possible to find invariant lower dimensional tori, having the same dynamics as that of the tori belonging to the associated invariant foliation, in the perturbed system. 

Most of the existing results concerning the existence of such tori deal only with generic perturbations of a given integrable system and hold for resonant tori with any number of resonances.  See for example the works of D. Treshchev \cite{treshchev_mechanism_1991}, H. Eliasson \cite{eliasson_biasymptotic_1994} and C. Cheng, S. Wang \cite{cheng_surviving_1999}.  These results provide sufficient conditions on an integrable system $N$ and on the resonant vector $\omega_0 = \nabla N(0)$, which for simplicity we suppose of the form $\omega_0 = (\omega, 0) \in \R^d \times \R^l$, with $\omega$ non-resonant, such that for \textit{almost every} perturbation $f$, the following holds:   There exists $\epsilon_0 > 0$ such that for any $|\epsilon| < \epsilon_0$, the perturbed Hamiltonian $N + \epsilon f$ admits an invariant $d$-dimensional torus whose restricted dynamics is conjugated to $R^t_\omega$. Notice that the size of the allowed perturbation depends on the function $f$ being considered. 

These works rely heavily on results or techniques concerning the persistence of \textit{non-degenerate lower dimensional invariant tori}. By non-degenerate we mean that these tori admit a special set of coordinates around them, commonly classified as \textit{elliptic}, \textit{hyperbolic} or \textit{mixed}, see \cite{russmann_invariant_2001} for the concerning definitions and for an overview of several results in this setting. In contrast to the case of a resonant torus, these non-degenerate lower dimensional invariant tori are always isolated. However, it is possible to prove generic persistence results by exploiting properties of the perturbation in order to conjugate the system, in a neighbourhood of one of the lower dimensional invariant tori in the invariant foliation, to a Hamiltonian for which the existence of one of the aforementioned special set of coordinates is clear. 

Similar results for arbitrary perturbations are only available when the number of resonances $l$  is equal to $1$ or $m - 1$. Notice that in the latter case the resonant torus is completely foliated by periodic orbits. This situation was considered by D. Bernstein and A. Katok  in \cite{bernstein_birkhoff_1987}, where they proved that for any sufficiently small $C^2$ perturbation of a $C^2$ integrable \textit{convex} Hamiltonian, i.e. a Hamiltonian having a positive definite Hessian matrix, the perturbed system possesses at least $m$ periodic orbits.  

Concerning resonant tori with exactly one resonance,  C. Cheng \cite{cheng_birkhoff-kolmogorov-arnold-moser_1996} showed the existence of at least one invariant torus of codimension one, whose restricted dynamics are conjugated to $R^t_\omega$, for any sufficiently small analytic perturbation of an analytic convex Hamiltonian, where, as before and for simplicity, we suppose the resonant rotation vector $\omega_0$ to be of the form $\omega_0 = (\omega, 0) \in \R^d \times \R^l$, with $\omega$ non-resonant. Let us point out that in Cheng's result a \textit{relatively Diophantine condition} on the rotation vector $\omega_0$ of the resonant torus is required. In this simplified setting, this condition amounts to $\omega$ being Diophantine. We give a formal definition of this property in (\ref{eq: relatively_diophantine}). A similar result to that of C. Cheng, for a class of non-convex Hamiltonians, was announced by P. Plotnikov and I. Kuznetsov in \cite{plotnikov_kolmogorovs_2011}. A particular case of the previous result was proven by  L. Corsi, R. Feola and G. Gentile \cite{corsi_lower-dimensional_2013} for perturbations not depending on the action variable.  

\subsection{Invariant resonant tori with arbitrary number of resonances}

The main result of this work, Theorem \ref{thm: main_thm}, is a generalization of the results of  P. Plotnikov, I. Kuznetsov \cite{plotnikov_kolmogorovs_2011} and  L. Corsi, R. Feola, G. Gentile \cite{corsi_lower-dimensional_2013} to invariant resonant tori with any number of resonances $1 \leq l \leq m - 1$. Let us mention that the method we use to prove Theorem \ref{thm: main_thm}, a KAM scheme with counter-term, is different from the methods in the works mentioned above. 

We will consider a class of non-convex analytic integrable Hamiltonians with $m$ degrees of freedom for which $\T^m \times \{0\}$ is a resonant invariant torus with exactly $l$ resonances, and show that, for any Hamiltonian $N$ in this class such that the resonant rotation vector $\omega_0 = \nabla N (0)$ verifies a relative Diophantine condition of the form
\begin{equation}
\label{eq: relatively_diophantine}
\exists K \in SL(m, \Z) \text{ such that } K\omega_0 = (\omega, 0) \in \R^d \times \R^l \text{ and } \omega \text{ is Diophantine,}
\end{equation}
 then any sufficiently small analytic perturbation of $N$ possesses an invariant $d$-dimensional torus whose restricted dynamics is conjugated to $R_\omega^t$.

Notice that for resonant vector $\omega_0$ with exactly $l$ resonances, a matrix $K \in SL(m, \Z) $ taking it to the form $K\omega_0 = (\omega, 0)$, with $\omega$ non-resonant, always exists. To see this it suffices to take $l$ linearly independent primitive resonances $k_{d + 1}, \dots, k_{d +l} \in \Z^m$ and to construct vectors $k_1, \dots, k_d \in  \Z^m$ such that the matrix $K$ of rows $k_1, \dots, k_{d + l}$ is unimodular. 

We point out that for $l = 1$, the class of non-convex Hamiltonians in Theorem \ref{thm: main_thm} coincides with the one considered in \cite{plotnikov_kolmogorovs_2011} and contains the class of Hamiltonians studied in \cite{corsi_lower-dimensional_2013}. The integrable Hamiltonian (\ref{eq: example}) considered in Section \ref{sc: example} belongs to this class. 

For the sake of clarity we postpone precise statements of our results to the next section.  

\section{Statements of the main results}

In this section we will provide precise statements for our main results. We start by setting some general notations that will be used throughout this work.

\subsection{Notations}
Given $z \in \C$ we denote its modulus by $|z|$. For $z \in \C^m$, we denote 
\[ |z|_1 = |z_1| + \dots + |z_m|, \hone |z| = \sqrt{|z_1|^2 + \dots + |z_m|^2}.\]
Let $ M_n(\R)$ denote the set of square matrices of order $n$ with values in $\R$. For $M \in M_n(\R)$, we denote
 \[ \nu_{\max}(M) = \max_{v \in \mathbb{S}^{n - 1}} \langle M v, v \rangle, \hskip0.5cm \nu_{\min}(M) = \min_{v \in \mathbb{S}^{n - 1}} \langle M v, v \rangle, \hskip0.5cm \| M\| = \max_{v \in \mathbb{S}^{n - 1}} | Mv|. \]
Notice that for a symmetric matrix $M$ the values $\nu_{\max}$, $\nu_{\min}$ correspond to the biggest and smallest eigenvalues of $M$ respectively. We denote the identity matrix by $I_n \in M_n(\R)$. Given $\gamma, \tau > 0$, we say that $\omega \in \R^d$ is \textit{Diophantine of type} $(\gamma, \tau)$ if 
\[ | \langle \omega, k \rangle | \geq \dfrac{\gamma}{|k|^{d + \tau}} \hskip0.5cm \text{ for all } k \in \Z^d \,\setminus\, \{ 0 \}.\] 
We denote by $\DCd$ the set of Diophantine vectors of type $(\gamma, \tau)$ in $\R^d$.  Given $k \in \N \cup \{ \infty, \omega\}$, we denote by $C^k(U , V)$ the space of $C^k$ functions defined on $U$ and taking values in $V$. Unless otherwise specified, all the analytic functions we consider are supposed to be real analytic. If $V = \C$, we denote this space simply by $C^k(U)$. Similarly, given $k_1, k_2 \in \N   \cup \{ \infty, \omega\}$, we denote by $C^{k_1, k_2}(U_1 \times U_2, V)$ and $C^{k_1, k_2}(U_1 \times U_2)$ the space of functions defined on $U_1 \times U_2$ which are of class $C^{k_1}$ in the first coordinate and of class $C^{k_2}$ in the second one. Given $f: U \subset \C^m \rightarrow \C^n$ we denote its sup-norm by 
\[ \| f\|_U = \sup_{z \in U} | f(z) |.\]
Given $f \in C^k(U, \C^n)$, with $k \in \N$ and $U \subset \C^m$, we denote its $C^k$-norm by
\[ \| f\|_{C^k(U)} = \sum_{\substack{\alpha \in \N^m \\ |\alpha|_1 \leq k}} \|\partial^\alpha f\|_U.\] 
If there is no risk of confusion we will denote $\| \cdot\|_{C^k(U)}$ simply by $\| \cdot\|_{C^k}$. Given $f \in C^{k_1, k_2}(U_1 \times U_2, \C^n)$, with $k_1, k_2 \in \N$ and $U_1 \subset \C^d$, $U_2 \subset \C^l$, we define its $C^{k_1, k_2}$-norm by
\[ \| f\|_{C^{k_1, k_2}(U_1 \times U_2)} = \sum_{\substack{\alpha = (\alpha_1, \alpha_2) \in \N^d \times \N^l \\ |\alpha_1|_1 \leq k_1,  |\alpha_2|_1 \leq k_2 }} \|\partial^\alpha f\|_{U_1 \times U_2}.\] 
As before, if there is no risk of confusion, we will denote $\| \cdot\|_{C^{k_1, k_2}(U_1 \times U_2)}$ simply by $\| \cdot\|_{C^{k_1, k_2}}$.

 Given a Hamiltonian $H: M \rightarrow \R$ of class $C^1$ over a symplectic manifold $(M, \omega)$,  we denote its associated Hamiltonian vector field by $X_H$ and the corresponding flow by $\Psi_H^t$. We denote by $\{ \cdot, \cdot \}$ the Poisson bracket associated to $\omega$. 
 The formal definition of these objects can be found in the Appendix. 

\subsection{Statements}

Let
 \[ \T^m_r = \left( \{ z \in \C \mid |\textup{Im}(z)| < r\} / \Z \right)^m, \hone B^{m}_s = \{ z \in \C^m \, \mid \, |z| < s \}, \] 
and define
\[\Sigma^{d, l}_{r, s} = \T^{d}_r \times \T^{l}_r \times B^{d}_s \times B^{l}_s, \hone D^{d, l}_{r,s} = \T^d_r \times B^{l}_s \times B^{d}_s \times B^{l}_s.\]
We denote coordinates in $\Sigma^{d, l}_{r, s}$ or $D^{d, l}_{r, s}$ by $(q, x, p, y)$. Notice that these domains define symplectic manifolds when endowed with the canonical symplectic form $$\sum_{i= 1}^{d} dq_i \wedge dp_i + \sum_{i = 1}^l dx_i \wedge dy_i.$$ The following is a simplified version of our main result (Theorem \ref{thm: main_thm}). 

\begin{thm}
\label{thm: main_thm_simplified}
Suppose $r, s, \gamma, \tau > 0$, $d, l \in \N$, $\omega \in DC_d(\gamma, \tau)$.  Let $M \in M_d(\R)$ and $Q \in M_l(\R)$ be, respectively, negative and positive definite symmetric matrices. Define $N \in C^\omega(\Sigma^{d, l}_{r, s})$ as
\begin{equation}
\label{eq: initial_unperturbed_simplified}
 N(p, y) =  \langle \omega, p \rangle + \dfrac{1}{2}\langle M p , p \rangle + \dfrac{1}{2}\langle Q y , y \rangle. 
 \end{equation}
There exists $\epsilon_0(r, s, d, l, \gamma, \tau, M, Q) > 0$, such that for any $f \in C^\omega(\Sigma^{d, l}_{r, s})$ with $\| f \|_{\Sigma^{d, l}_{r, s}} < \epsilon_0$, the Hamiltonian $H \in C^\omega(\Sigma^{d, l}_{r, s})$ given by
\[ H(q, x, p, y) = N(p, y) + f(q, x, p, y),\]
admits an invariant $d$-dimensional torus parametrized by a function $\phi \in C^\omega(\td, \Sigma_{r, s}^{d, l})$ obeying $\phi^* X_H = X_\omega,$ where $X_\omega$ denotes the constant vector field $\omega$ over $\T^d$.
\end{thm}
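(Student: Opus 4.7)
The plan is to run an analytic KAM scheme with a finite-dimensional counter-term, producing the sought invariant torus as the limit of a Newton-type iteration. I look for the embedding $\phi:\T^d\to\Sigma_{r,s}^{d,l}$, $\phi=(Q,X,P,Y)$, satisfying the conjugacy equation $D\phi\cdot\omega=X_H\circ\phi$, starting from the unperturbed solution $\phi_0(\theta)=(\theta,x_0,0,0)$ for a base point $x_0\in\T^l$ to be chosen at the end. Writing $\phi(\theta)=(\theta+u,\,x_0+v,\,p,\,y)(\theta)$ with small $(u,v,p,y)$, the conjugacy equation splits into four coupled scalar equations for $u,v,p,y$; the three compatibility conditions on the equations for $u$, $v$ and $p$ are enforced respectively by adjusting the averages $\bar p,\bar y$ (using invertibility of $M$ and $Q$) and by the translation symmetry in $\theta$, but the compatibility condition $\int_{\T^d}\partial_x f\circ\phi\,d\theta=0$ on the $y$-equation is not automatic — it is the genuine $l$-dimensional obstruction created by the degeneracy of $N$ in the $x$-variable.

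To bypass this obstruction I introduce a counter-term $\lambda\in\R^l$ and, at each KAM step, solve the relaxed equation $D\phi\cdot\omega=X_H\circ\phi+(0,0,0,\lambda)$. Linearizing around an approximate solution $\phi_n$ yields an operator $L_n:=\omega\cdot\partial_\theta-A_n(\theta)$ with $A_n$ the Jacobian of $X_H$ along $\phi_n$. I would invert $L_n$ on zero-average functions using the Diophantine estimate for $\omega\cdot\partial_\theta$, compensating the missing average by the counter-term update $\delta\lambda_n$, and then choose the Newton update $\Delta\phi_n$ on a slightly smaller analytic strip. With a geometric sequence of radii $r_n\downarrow r_\infty>0$, $s_n\downarrow s_\infty>0$, standard analytic-KAM bounds of the form $\|\Delta\phi_n\|_{r_{n+1},s_{n+1}}\lesssim\gamma^{-1}(r_n-r_{n+1})^{-c}\epsilon_n$ give the quadratic recursion $\epsilon_{n+1}\le C_n\epsilon_n^2$, which converges provided $\epsilon_0=\epsilon_0(d,l,r,s,\gamma,\tau,M,Q)$ is chosen small enough. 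This produces an analytic embedding $\phi_\infty(x_0)$ and an accumulated counter-term $\lambda_\infty(x_0)\in\R^l$, depending continuously on $x_0\in\T^l$.

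The final step is to locate $x_0^\star\in\T^l$ with $\lambda_\infty(x_0^\star)=0$, for which $\phi_\infty(x_0^\star)$ solves the original, unrelaxed conjugacy equation. I expect the Hamiltonian structure and the translation invariance in the $x$-variable of $\T^l$ to force $x_0\mapsto\lambda_\infty(x_0)$ to be the gradient of a $C^1$ effective potential $\Phi:\T^l\to\R$, whose leading part is the average $x_0\mapsto\int_{\T^d}f(\theta,x_0,0,0)\,d\theta$. Since $\T^l$ is compact, $\Phi$ attains a critical point $x_0^\star$ at which $\lambda_\infty(x_0^\star)=\nabla\Phi(x_0^\star)=0$, and the corresponding $\phi_\infty$ is the desired invariant $d$-torus for $H$ with dynamics conjugate to $R^t_\omega$.

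The hard part will be this last step: proving that the counter-term is genuinely a gradient throughout the scheme. This demands that the symplectic coordinate changes used to bring $H$ into normal form at each iteration be equivariant with respect to $x$-translations, so that the gradient structure is inherited by $\lambda_n$ from step to step. The mixed-sign definiteness $\nu_{\max}(M)<0<\nu_{\min}(Q)$ — which substitutes for the classical convexity hypothesis of Bernstein–Katok — is what provides the quantitative control needed to keep $\Phi$ and $\nabla\Phi$ uniformly bounded in $x_0$ and to close the argument.
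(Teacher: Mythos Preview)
Your overall strategy—counter-term KAM plus a variational argument on $\T^l$ to kill the counter-term—matches the paper's. But two intertwined issues are missing, and together they are the heart of the matter.

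\textbf{The iteration cannot be run uniformly in $x_0$.} You treat the linearized operator $L_n=\omega\!\cdot\!\partial_\theta-A_n$ as if it were invertible with uniform bounds for every base point $x_0\in\T^l$. It is not. The $(v,y)$-block of $A_n$ has the form $\bigl(\begin{smallmatrix}0&Q\\-\beta_n(x_0)&0\end{smallmatrix}\bigr)$ with $\beta_n(x_0)\approx\partial_x^2(\mathcal{M}_q f)(x_0)$ plus corrections; inverting $L_n$ on the $k$-th Fourier mode requires $\nu_{\max}(\beta_n(x_0))\le \tfrac12\min_{|k|\le K_n}\langle\omega,k\rangle^2$, and this threshold tends to~$0$ because $K_n\to\infty$. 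Since $\beta_n$ does \emph{not} tend to zero globally (it converges to a nontrivial $\beta_\infty(x_0)$ of size~$\epsilon_0^{1/2}$), the cohomological step eventually fails on the set where $\beta_\infty$ has a positive eigenvalue. The paper circumvents this by a bump-function cutoff: at each step the generating function is multiplied by a cutoff supported on $\{\nu_{\max}(\beta_n)<3\delta_n^+\}$, and the residual outside this set is stored in an extra coordinate $g(\mathbf{N}_n)$ of the normal form. The price is that the limit normal form $N_\infty$ has an invariant torus \emph{only} at those $x_0$ where $\nu_{\max}(\beta_\infty(x_0))\le 0$.

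\textbf{A critical point is not enough; you need a maximum.} Because of the above, the equation $\lambda_\infty(x_0^\star)=0$ alone does not give an invariant torus: you must simultaneously ensure $\nu_{\max}(\beta_\infty(x_0^\star))\le 0$. The paper proves two estimates, both valid only on the good set $\{\nu_{\max}(\beta_{n-1})\le\delta_{n-1}\}$: first $\alpha_n=\nabla_\varphi\zeta_n+O(\epsilon_n)$, and second
\[
\beta_n-\Gamma_nM_n^{-1}\Gamma_n^T=L_n\,\nabla_\varphi^2\zeta_n\,R_n+O(\epsilon_n),
\]
with $L_n,R_n$ close to $I_l$. Since $M$ is negative definite, $\Gamma_nM_n^{-1}\Gamma_n^T$ is negative semidefinite, so $\nu_{\max}(\beta_\infty(x_0))\le 0$ follows precisely when the Hessian of $\zeta_\infty$ is negative semidefinite at $x_0$—that is, at a \emph{maximum}. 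An arbitrary critical point (a saddle of the effective potential) would leave $\beta_\infty$ with a positive eigenvalue, land you outside the good set, and invalidate both the gradient identity and the existence of the torus. This is exactly where the sign hypotheses on $M$ and $Q$ are used, and your last paragraph does not capture it.

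A secondary point: the gradient structure of the counter-term is not preserved exactly by the scheme (your proposed equivariance mechanism does not survive the cutoff), only up to $O(\epsilon_n)$ and only on the good set; this is why the paper runs the argument at finite $n$ and passes to the limit, rather than asserting $\lambda_\infty=\nabla\Phi$ directly.
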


In the following, we will introduce a parameter $\varphi \in \T^l$ and consider, instead of $\Sigma^{d, l}_{r, s}$, domains of the form $\T^l \times D^{d, l}_{r, s}$. We do this in order to localize a perturbed Hamiltonian  $H = N + f \in C^\omega(\Sigma^{d, l}_{r, s})$ around all tori of the form $\T^d \times \{(\varphi, 0, 0)\}$ (which by assumption are invariant for the unperturbed system $N$) simultaneously. For $H$ as in the previous statement, this amounts to consider a parametrized Hamiltonian over $\T^l \times D^{d, l}_{r, s}$ of the form $N(p, y) + f(q, x + \varphi, p, y)$.

Given $f \in C(\T^l \times D^{d, l}_{r, s})$  we denote by $\Avg f$ the averaged function
\[ \Avg f(\varphi, x, p, y) = \int_{\td} f(\varphi, q, x, p, y)dq.\] 
The next theorem can be seen as a parametrized version of Theorem \ref{thm: main_thm_simplified}.  Its proof is based on a counter-term KAM scheme which we summarize in Proposition \ref{prop: iteration}. 

\begin{thm}
\label{thm: main_thm_parametrized}
Suppose $r_0, s_0, \gamma, \tau > 0$, $d, l \in \N$, $\omega \in DC_d(\gamma, \tau)$.  Let $M_0 \in M_d(\R)$ and $Q_0 \in M_l(\R)$ be, respectively, negative and positive definite symmetric matrices. Let $h_0 \in C^\omega(\T^l \times D^{d, l}_{r, s}) \cap O^3(p, y).$  Define $N_0 \in C^\omega(\T^l \times D^{d, l}_{r, s})$ as 
\begin{equation}
\label{eq: initial_unperturbed}
 N_0(\varphi, q, x, p, y) =  \langle \omega, p \rangle + \dfrac{1}{2}\langle M_0 p , p \rangle + \dfrac{1}{2}\langle Q_0 y , y \rangle + h_0(\varphi, q, x, p, y). 
 \end{equation}
There exists $\epsilon_0(d, l, r_0, s_0, \gamma, \tau, M_0, Q_0, h_0) > 0$, such that for any $f_0 \in C^{\omega}(\T^l \times D_{r_0, s_0}^{d, l})$ with $\| f_0\|_{\T^l \times D_{r_0, s_0}^{d, l}} < \epsilon_0$ and obeying 
\begin{equation}
\label{eq: equal_derivatives}
\Avg(\partial_ xf_0) = \Avg(\partial_\varphi f_0),
\end{equation}
there exists $\varphi_0 \in \T^l$ for which the Hamiltonian $\overline{H} \in C^\omega(D_{r_0, s_0}^{d, l})$ given by 
\[ \overline{H}(q, x, p, y)  = N_0(\varphi_0, q, x, p, y) + f_0(\varphi_0, q, x, p, y), \]
admits an invariant $d$-dimensional torus parametrized by an embedding $\overline{\phi} \in C^\omega(\td, D_{r_0, s_0}^{d, l})$ obeying $\overline{\phi}^* X_{\overline{H}} = X_\omega,$ where $X_\omega$ denotes the constant vector field $\omega$ over $\T^d$. Furthermore
\[ \left\| \overline{\phi} - \phi_0\right\|_{\T^d} = O\big( \| f_0 \|_{C^2}^{1/2}\big),\] where $\phi_0 :\td \rightarrow D_{r_0, s_0}^{d, l}$ is the trivial embedding given by $q \mapsto (q, 0)$.
\end{thm}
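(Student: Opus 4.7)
The plan is to apply the counter-term KAM scheme summarized in Proposition \ref{prop: iteration} to the parametrized Hamiltonian $H_0 = N_0 + f_0$. Each iteration yields a symplectic change of coordinates $\Phi_n$ (analytic jointly in $\varphi$ and the phase variables) together with an incremental counter-term, so that after $n$ steps the Hamiltonian, once corrected by the cumulative counter-term $\lambda_n \in C^\omega(\T^l, \R^l)$, takes the form $N_n + f_n$ on a shrinking domain $\T^l \times D_{r_n, s_n}^{d, l}$, with $N_n$ of the same shape as (\ref{eq: initial_unperturbed}) (and matrices $M_n, Q_n$ still negative/positive definite) and $\|f_n\|$ decaying super-exponentially. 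The definiteness of $M_0, Q_0$ is essential here, since it controls the hyperbolic--elliptic block of the normal form and ensures that the homological equations remain solvable and the matrices $M_n, Q_n$ remain non-degenerate at every step. In the limit one obtains an analytic family of embeddings $\phi_\infty(\varphi, \cdot) : \td \to D_{r_0, s_0}^{d, l}$ and a limit counter-term $\lambda_\infty \in C^\omega(\T^l, \R^l)$ such that, for every $\varphi \in \T^l$, the image $\phi_\infty(\varphi, \cdot)(\td)$ is an invariant $d$-torus carrying the flow $R^t_\omega$ for the Hamiltonian $H_0(\varphi, \cdot)$ modified by $\lambda_\infty(\varphi)$.

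The remaining task is to select $\varphi_0 \in \T^l$ with $\lambda_\infty(\varphi_0) = 0$, for then the correction vanishes and $\overline H = N_0(\varphi_0, \cdot) + f_0(\varphi_0, \cdot)$ admits $\overline\phi := \phi_\infty(\varphi_0, \cdot)$ as the desired invariant torus. I would establish this by showing that $\lambda_\infty = \nabla F_\infty$ for some analytic $F_\infty : \T^l \to \R$; compactness of $\T^l$ then ensures that $F_\infty$ attains its extrema, and any critical point of $F_\infty$ furnishes an admissible $\varphi_0$. The gradient structure is extracted from hypothesis (\ref{eq: equal_derivatives}): one checks inductively that the identity $\Avg \partial_x f_n = \Avg \partial_\varphi f_n$ is preserved throughout the scheme, since the normalizing generating functions are chosen to respect the shift-symmetry between $\varphi$ and $x$ at the $q$-averaged level. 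At step $n$ the counter-term increment $\delta\lambda_n$ is, up to errors of order $\|f_n\|^2$, equal to the $q$-averaged $x$-gradient of $f_n$ at the approximate invariant torus; the preserved identity converts this into the $\varphi$-gradient of the periodic function $\Avg f_n$ at that torus, so $\delta\lambda_n = \nabla (\delta F_n)$ with $\delta F_n$ single-valued on $\T^l$. Summing over $n$ produces $\lambda_\infty = \nabla F_\infty$ with $F_\infty$ analytic and periodic, as required.

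The quantitative bound $\|\overline\phi - \phi_0\|_{\T^d} = O(\|f_0\|_{C^2}^{1/2})$ is read off from the zeroth step of the iteration: balancing the quadratic forms $\langle M_0 p, p\rangle$ and $\langle Q_0 y, y\rangle$ against the linear forcing extracted from $f_0$ produces an initial displacement of order $\|f_0\|^{1/2}$, and all subsequent corrections are super-exponentially smaller and do not affect the leading scaling. The principal obstacle in the whole program is the parametric bookkeeping: the KAM scheme must be carried out with uniform estimates in $\varphi \in \T^l$, condition (\ref{eq: equal_derivatives}) must be stably propagated through the nonlinear canonical transformations so that $\lambda_\infty$ is genuinely an \emph{exact} 1-form on $\T^l$ (rather than merely closed, which could leave a harmonic component and preclude $\lambda_\infty$ from vanishing anywhere), and sufficient analytic regularity in $\varphi$ must be retained for critical-point theory on $\T^l$ to deliver the parameter $\varphi_0$.
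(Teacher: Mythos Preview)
Your overall architecture is right—counter-term KAM plus a variational argument on $\T^l$—but there is a genuine gap in the second half that would make the proof fail as written.

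You assert that the limit of the iteration gives, \emph{for every} $\varphi\in\T^l$, an invariant $d$-torus for $H_0(\varphi,\cdot)-\langle\lambda_\infty(\varphi),x\rangle$, so that killing $\lambda_\infty$ at a single critical point suffices. This is not what the scheme delivers. The cohomological equations at each step involve coupled small-divisor problems (Lemmas~\ref{coupled_equation_1}, \ref{coupled_equation_2}) whose solvability requires $\nu_{\max}(\beta(\varphi))$ to be bounded by a quantity of order $\min_{|k|\le K}\langle\omega,k\rangle^2$. Outside the set $U=\{\varphi:\nu_{\max}(\beta(\varphi))<\delta\}$ one cannot solve them, and the construction inserts a bump function so that on $\T^l\setminus U$ the error is simply dumped into the $g$-component of the normal form. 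Consequently $N_\infty(\varphi,\cdot)$ has the invariant torus $\T^d\times\{0\}$ \emph{only} for $\varphi$ with $\nu_{\max}(\beta_\infty(\varphi))\le 0$; for other $\varphi$ there is no torus, counter-term or not. So two conditions must be met at $\varphi_0$: $\alpha_\infty(\varphi_0)=0$ \emph{and} $\nu_{\max}(\beta_\infty(\varphi_0))\le0$. An arbitrary critical point of your potential $F_\infty$ handles only the first.

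The paper closes this gap via Propositions~\ref{prop: alpha} and~\ref{prop: beta}. The counter-term is not shown to be an exact gradient globally; rather, on the good set one has $\alpha_n=\nabla_\varphi\zeta_n+O(\epsilon_n)$ (this is where (\ref{eq: equal_derivatives}) is used, once, on the original $F=H_0-\langle\omega,p\rangle$, not propagated inductively as you suggest), and, crucially, $\beta_n-\Gamma_nM_n^{-1}\Gamma_n^T=L_n(\nabla_\varphi\alpha_n)R_n+O(\epsilon_n)$ with $L_n,R_n$ close to $I_l$. One then takes $\varphi_0$ to be a \emph{maximum} of $\zeta=\lim\zeta_n$: there $\nabla^2\zeta(\varphi_0)\le0$, hence $\nabla_\varphi\alpha_n(\varphi_0)$ is approximately negative semi-definite, and since $\Gamma_nM_n^{-1}\Gamma_n^T$ is negative semi-definite (this is exactly where the negative-definiteness of $M_0$ enters, not in ``controlling the hyperbolic--elliptic block'' as you write), one gets $\nu_{\max}(\beta_n(\varphi_0))\precdot\epsilon_n^{1/2}<\delta_n$ inductively. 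Without this second-order link you cannot guarantee $\varphi_0$ lies in the region where the normal form actually carries a torus. Finally, a minor point: the limit objects are only $C^2$ in $\varphi$ (the bump functions destroy analyticity), not $C^\omega$ as you claim; this is harmless for the argument but worth noting.
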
 

Theorem \ref{thm: main_thm_simplified} can be generalized to a wider class of Hamiltonians, not necessarily of the form (\ref{eq: initial_unperturbed_simplified}). In fact, Theorem \ref{thm: main_thm_parametrized} implies the following.

\begin{thm}
\label{thm: main_thm}
Let $r, s > 0$, $m \in \N$, $h \in C^\omega(\Sigma_{r, s}^{d, l}) \cap O^3(p, y)$ and $N \in C^\omega(B_{s}^m)$ with $\omega_0 = \nabla N(0)$ having exactly $1 \leq l \leq m -1$ resonances. Denote $d = m - l.$ Suppose $K \in SL(m, \Z)$ is such that $K \omega_0 = (\omega, 0) \in \R^d \times \R^l$ and denote
\[ K \nabla^2 N(0) K^T = \left( \begin{array}{@{}c|c@{}}
A_{d \times d} & B_{d \times l} \\
\hline
B^T_{l \times d} & C_{l \times l} 
\end{array}\right).\] 
Assume that the following conditions hold:
\begin{enumerate}[(i)]
\item $\omega \in DC_d(\gamma, \tau)$ for some $\gamma, \tau > 0$,
\item $\nabla^2 N (0)$ and $C$ are non singular,
\item $A + B^T C^{-1} B$ and $C$ are, respectively, positive and negative definite. 
\end{enumerate}
Then, there exists $\epsilon_0(d, l, r, s, \gamma, \tau, N, K, h) > 0$, such that for any $f \in C^\omega(\Sigma_{r, s}^{d, l})$ obeying $\| f \|_{\Sigma_{r, s}^{d, l}} < \epsilon_0$, the Hamiltonian $H \in C^\omega(\Sigma_{r, s}^{d, l})$ given by 
\[H(q, x, p, y) = N(p, y) + h(q, x, p, y) + f(q, x, p, y),\]
admits an invariant $d$-dimensional torus parametrized by a function $\phi \in C^\omega(\td,\Sigma_{r, s}^{d, l})$ obeying $\phi^* X_H = X_\omega,$ where $X_\omega$ denotes the constant vector field $\omega$ over $\T^d$.
\end{thm}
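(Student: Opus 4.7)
The plan is to reduce Theorem \ref{thm: main_thm} to the parametrized result Theorem \ref{thm: main_thm_parametrized} by a short sequence of linear symplectic changes of coordinates that normalize the integrable part of the unperturbed Hamiltonian.

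First, I would apply the symplectic diffeomorphism $\Phi_K \colon (\theta, I) \mapsto (K\theta, K^{-T} I)$ of $\T^m \times \R^m$, which is well-defined since $K \in SL(m, \Z)$. Writing $N$ in the new coordinates $(\tilde q, \tilde x, \tilde p, \tilde y)$ yields a function with gradient $K \omega_0 = (\omega, 0)$ and Hessian $K \nabla^2 N(0) K^T$ at the origin, that is, the block matrix with entries $A, B, C$ given in the statement. Next, I would eliminate the cross term in the quadratic part by the affine symplectic map $\Phi_L$ generated by the second-kind generating function
\[ S_2(q, x, \tilde p, \tilde y) = \langle q, \tilde p\rangle + \langle x, \tilde y\rangle - \langle C^{-1} B^T \tilde p, x\rangle. \]
A direct Schur-complement computation shows that, after $\Phi_L \circ \Phi_K$, the Hamiltonian takes the form
\[ \langle \omega, p\rangle + \tfrac{1}{2} \langle \Lambda p, p\rangle + \tfrac{1}{2} \langle C y, y\rangle + R(p, y) + \tilde h(q, x, p, y) + \tilde f(q, x, p, y), \]
where $\Lambda$ is the Schur complement of $C$ in $K\nabla^2 N(0) K^T$, $R \in O^3(p, y)$ is the Taylor remainder of $N$, and $\tilde h$ is the pullback of $h$ (still $O^3(p, y)$ since $\Phi_K, \Phi_L$ are linear symplectic maps fixing the origin in $(p, y)$).

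By hypothesis $\Lambda$ is positive definite and $C$ is negative definite, opposite to the $M_0, Q_0$ required by Theorem \ref{thm: main_thm_parametrized}. I would therefore pass to $-H$: its invariant sets coincide with those of $H$, its frequency $-\omega$ remains in $DC_d(\gamma, \tau)$, and the pair $(-\Lambda, -C)$ now satisfies the required sign conditions. Introducing the parameter $\varphi \in \T^l$ by setting $h_0(\varphi, q, x, p, y) := -R(p, y) - \tilde h(q, x + \varphi, p, y)$ and $f_0(\varphi, q, x, p, y) := -\tilde f(q, x + \varphi, p, y)$, I would apply Theorem \ref{thm: main_thm_parametrized} on a sufficiently restricted subdomain $\T^l \times D^{d, l}_{r', s'}$. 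The compatibility condition $\Avg(\partial_x f_0) = \Avg(\partial_\varphi f_0)$ is automatic, since by the chain rule $\partial_\varphi f_0 = \partial_x f_0$ pointwise. The theorem then yields a parameter $\varphi_0 \in \T^l$ and an embedding $\bar\phi$ parametrizing an invariant $d$-torus for the Hamiltonian obtained by evaluating at $\varphi_0$, satisfying the analogue of $\bar\phi^* X = X_{-\omega}$. Translating $x \mapsto x - \varphi_0$ and composing with $\Phi_L^{-1} \circ \Phi_K^{-1}$ produces an embedding $\phi$ with $\phi^* X_{-H} = X_{-\omega}$, which, using $X_{-H} = -X_H$, rearranges to the required identity $\phi^* X_H = X_\omega$.

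The main technical obstacle is the bookkeeping of analyticity radii and sup-norms through this chain of transformations: each of $\Phi_K$, $\Phi_L$, and the $\varphi$-translation shrinks $\Sigma^{d, l}_{r, s}$ by a bounded factor determined by $K$, $\nabla^2 N(0)$, $r$, and $s$, and inflates sup-norms correspondingly. One has to verify that the smallness threshold provided by Theorem \ref{thm: main_thm_parametrized}, which depends on $(d, l, r', s', \gamma, \tau, -\Lambda, -C, h_0)$, translates into a threshold $\epsilon_0$ depending only on the quantities $(d, l, r, s, \gamma, \tau, N, K, h)$ permitted by Theorem \ref{thm: main_thm}. Beyond this bookkeeping no further dynamical ingredient seems to be required; the reduction is essentially elementary once the parametrized result is in hand.
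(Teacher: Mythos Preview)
Your reduction is essentially the paper's own: both apply the symplectic change induced by $K$, then a linear shear eliminating the $p$--$y$ cross term (the Schur complement step), introduce the parameter $\varphi$ via $x\mapsto x+\varphi$, verify the compatibility condition \eqref{eq: equal_derivatives}, and invoke Theorem~\ref{thm: main_thm_parametrized}. The paper performs the $\varphi$-parametrisation before the shear rather than after, but this is immaterial. The one substantive difference is your passage to $-H$: the paper does not do this, instead identifying $M_0=C$ and $Q_0=A+B^TC^{-1}B$ directly and applying Theorem~\ref{thm: main_thm_parametrized} without a sign flip. Taking condition~(iii) literally as stated, your detour through $-H$ is a clean way to match the definiteness requirements of Theorem~\ref{thm: main_thm_parametrized}; the paper's direct route relies on sign conventions in (iii) and in the $M_0,Q_0$ identification that are not entirely consistent as written (note the dimensional mismatch in $M_0=C$ acting on $p\in\R^d$), so your version is arguably more careful here. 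Either way, as you say, no dynamical ingredient beyond Theorem~\ref{thm: main_thm_parametrized} is needed.
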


\begin{proof}
Let $\kappa = \min\{\|K\|, \| K\|^{-1}\}$, $r' = \kappa r$ and $s' = \kappa s$. Denote by $\Psi$ the symplectic change of coordinates given by
\[
\begin{array}{cccc}
\Psi : & \Sigma_{r', s'}^{d, l} & \rightarrow & \Sigma_{r, s}^{d, l} \\
& (q, x, p, y) & \rightarrow & (K^{-1}(q, x), K^T (p, y)).
\end{array}
\]
Then
\begin{equation*}
\label{eq: first_reduction}
\begin{aligned}
H \circ \Psi (q, x, p, y) & = \langle \omega, p \rangle + \frac{1}{2}\langle Ap, p \rangle + \frac{1}{2}\langle Cy, y \rangle + \langle Bp, y\rangle\\
& \quad + f(K^{-1}(q, x), K^T(p, y)) + O^3(p, y).
\end{aligned}
 \end{equation*}
Consider the parametrized Hamiltonian $H' \in C^\omega(\T^l \times D_{r', s'}^{d, l})$ given by
\begin{equation*}
\label{eq: initial_Hamiltonian}
H'(\varphi, q, x, p, y) = H \circ \Psi (q, x + \varphi, p, y).
\end{equation*}
Notice that for $\varphi_0 \in \T^l$ fixed, an invariant torus for $\overline{H} = H_0(\varphi_0, \cdot) \in C^\omega(D_{r', s'}^{d, l})$ uniquely defines an invariant torus for $H \circ \Psi$. In fact, if $\overline{\phi} : \T^d \rightarrow D_{r', s'}^{d, l}$ parametrizes an invariant torus for $\overline{H}$, then $\phi : \T^d \rightarrow \Sigma_{r', s'}^{d, l}$ given by 
\[\phi(q) = (0, \varphi_0, 0, 0) + \overline{\phi}(q),\]
parametrizes an invariant torus for $H \circ \Psi$. Let $\kappa' = \min\{\frac{1}{2}, \|C^{-1}B\|\}$, $r_0 = \kappa' r'$ and $s_0 = \kappa' s'$. Define
\[
\begin{array}{cccc}
\Phi : &\T^l \times D_{r_0, s_0}^{d, l} & \rightarrow &\T^l \times D_{r', s'}^{d, l} \\
& (\varphi, q, x, p, y) & \rightarrow & (\varphi, q + (C^{-1}B)^Tx, x, p, y - C^{-1}Bp).
\end{array}
\]
Notice that for $\varphi \in \T^l$ fixed, $\Phi$ defines a symplectic transformation from $D_{r_0, s_0}^{d,l}$ to $D_{r', s'}^{d, l}$. Denoting $Q_0 = A + B^T C^{-1}B,$  $M_0 = C,$ $f' = f \circ \Psi \in C^\omega(\Sigma_{r', s'}^{d, l})$ and 
 \[ h_0(\varphi, q, x, p, y) = h(K^{-1}(q + (C^{-1}B)^Tx, x + \varphi), K^T(p, y - C^{-1}Bp)),\]
 \[ f_0(\varphi, q, x, p, y) = f'(q + (C^{-1}B)^Tx, x + \varphi, p, y - C^{-1}Bp),\]
the parametrized Hamiltonian $H_0 = H' \circ \Phi \in C^\omega(\T^l \times D_{r_0, s_0}^{d, l})$ is given by 
\begin{equation*}
 \begin{aligned}
 \label{eq: second_reduction}
 H_0 (\varphi, q, x, p, y) & = \langle \omega, p \rangle + \frac{1}{2}\langle  M_0p, p \rangle + \frac{1}{2}\langle Q_0y, y \rangle + h_0(\varphi, q, x, p, y) + f_0(\varphi, q, x, p, y),
 \end{aligned}
 \end{equation*}
with $h_0 = O^3(p, y)$. Noticing that $f_0$ satisfies (\ref{eq: equal_derivatives}), the result now follows by Theorem \ref{thm: main_thm_parametrized}.
\end{proof}

The remaining of the paper is devoted to the proof of Theorem \ref{thm: main_thm_parametrized}.

\subsection{Outline of the proof}
In this section we sketch the proof of Theorem \ref{thm: main_thm_parametrized} which is based on a counter-term KAM scheme. The iterative procedure proposed here is formally summarized in Proposition \ref{prop: iteration} and uses a \textit{normal form} whose formal definition we postpone to Section \ref{sc: normal_form}. For the moment, we can think of the normal form as a family of parametrized analytic Hamiltonians of the form $N: \T^l \times D_{r, s}^{d,l} \rightarrow \C$ (where $\varphi \in \T^l$ denotes the parameter) for which the existence of the desired invariant torus can be easily deduced for at least one of the parameters. 

Let us start by giving a brief sketch of the iterative scheme. Broadly speaking, we will construct sequences of real numbers $r_n \searrow r_\infty > 0$,  $s_n \searrow s_\infty > 0$, which we will use to define domains $D_{r_{n}, s_{n}}^{d,l}$, and sequences $\alpha_n,$ $\Phi^n,$ $N_n,$ $f_n$ (all depending on the parameter $\varphi \in \T^l$) corresponding, respectively, to the function $\alpha_n: \T^l \rightarrow \R$ characterizing the counter-term $\langle \alpha_n(\varphi), x \rangle,$ to a family of symplectic coordinates $\Phi^n: \T^l \times D_{r_{n - 1}, s_{n - 1}}^{d,l} \rightarrow D_{r_{n}, s_{n}}^{d,l},$ to a parametrized Hamiltonian in normal form $ N_n: \T^l \times D_{r_{n}, s_{n}}^{d,l} \rightarrow \C$ and to the error term $ f_n: \T^l \times D_{r_{n}, s_{n}}^{d,l} \rightarrow \C$, at the $n$-th step of the iterative procedure. These sequences will obey
 \begin{equation}
 \label{eq: n_step}
 (N_0 + f_0 - \langle \alpha_n, x \rangle ) \circ \Phi^n = N_n + f_n 
 \end{equation}
and their limits (in the $C^{2}$-norm) when $n$ goes to infinity will be well defined, with $f_n$ converging to zero and $N_n$ converging to a well-defined parametrized Hamiltonian $N_\infty$ in normal form. Intuitively, at each step of the iterative procedure the Hamiltonian in the LHS of (\ref{eq: n_step}) is getting closer to a parametrized Hamiltonian in normal form. By making $n$ go to infinity in (\ref{eq: n_step}) the iterative procedure just described will yield to
\[ (N_0 + f_0 - \langle \alpha_\infty, x \rangle ) \circ \Phi^\infty = N_\infty, \]
for some $\alpha_\infty \in C^{2}(\T^l,\R^l)$, some family of symplectic transformations $\Phi^\infty \in C^{2, \omega}(\T^l \times D_{r_\infty, s_\infty}^{d,l}, D_{r_0, s_0}^{d,l})$ and some $N_\infty \in C^{2, \omega}(\T^l \times D_{r_\infty, s_\infty}^{d,l}) $ in normal form. Assuming the existence of these functions, Theorem \ref{thm: main_thm_parametrized} will be proved if there exists $\varphi_0$ in $\T^l$ so that, simultaneously, $\alpha_\infty(\varphi_0) = 0$ and the Hamiltonian $N_\infty(\varphi_0, \cdot)$ admits an invariant torus as desired. 

As we shall see, the normal form $N_\infty$ and the counter-term $\alpha_\infty$ are closely related. In fact, if we denote by $\epsilon_n$ the $C^2$-norm of $f_n$ at the $n$-th step of the iterative procedure, the counter-term $\alpha_n$, when restricted to the set of $\varphi$ for which $N_n(\varphi, \cdot)$ admits an invariant torus as desired, will be $\epsilon_n$-close to the gradient of a smooth function $\zeta_n$. This is the content of Proposition \ref{prop: vanishing_counter_term}. Furthermore, the sequence $\zeta_n$ will converge in the $C^2$-topology to a well defined function $\zeta_\infty$ with the following property: If $\varphi_0 \in \T^l$ is a local maxima for $\zeta_\infty$, then for all $n \in \N$ the Hamiltonian $N_n(\varphi_0, \cdot)$ admits an invariant torus as desired. From this we will conclude that $\alpha_\infty(\varphi_0) = 0$. 

The iterative KAM scheme just described is summarized in Proposition \ref{prop: iteration} and proven in Section \ref{sc: proof_A}. The existence of $\varphi_0 \in \T^l$ for which the normal form $N_\infty(\varphi_0, \cdot)$ possesses an invariant torus and the counter-term vanishes is the content of Proposition \ref{prop: vanishing_counter_term}, which we prove in Section \ref{sc: proof_B}. 

\subsection{Normal Form}
\label{sc: normal_form}
 Two things are mainly sought in the definition of the normal form. First, an invariant torus with the desired properties must exist for the induced flow. Secondly, the associated cohomological equation (see Lemma \ref{lem: cohomological_equation}), which arises naturally when trying to conjugate a perturbed normal form to a Hamiltonian in normal form, must be solvable. Let us introduce some notations that will be useful in its definition. 
 
For $d, l \in \N^\ast$ fixed and for any $r, s > 0$ we denote by $\normalrs$ the vector space
\[
\begin{aligned}
\normalrs & = \R^d \times \C^\infty(\T^l) \times C^\infty(\T^l,M_l(\R)) \times C^\infty(\T^l,M_{l \times d }(\R)) \\
 & \quad \times C^\infty(\T^l,M_{d}(\R))  \times C^\infty(\T^l,M_{l}(\R))  \times C^{\infty, \omega}(\T^l \times D_{r,s}) \\
 & \quad \times C^{\infty, \omega}(\T^l \times D_{r,s}) \cap O^3(x, p, y).
 \end{aligned}
 \]
We denote elements of $\normalrs$ as tuples $\mathbf{N} = (w, c, \beta, \Gamma, M, Q, g, h)$ and its coordinates by $w(\mathbf{N}), c(\mathbf{N}), \dots, h(\mathbf{N})$. We endow $\normalrs$ with the norm
\[ \| \mathbf{N} \|_{\mathcal{N}_{r, s}} = \max\left\{|w|, \|c\|_{C^2}, \|\beta\|_{C^2}, \|\Gamma\|_{C^2},  \|M\|_{C^2}, \|Q\|_{C^2}, \| g\|_{C^2}, \| h\|_{C^2} \right\}.\]
 To each $\mathbf{N} \in \normalrs$, we associate a parametrized Hamiltonian by means of the linear operator
$$T_{r,s} :\normalrs \rightarrow C^{\infty, \omega}(\T^l \times D_{r,s}),$$
given by
\begin{equation}
 \label{eq: normal_form_hamiltonian}
 \begin{aligned}
T_{r,s}(\mathbf{N})& (\varphi, q, x, p, y) = c(\varphi) + \langle w, p \rangle + \dfrac{1}{2}\langle M(\varphi)p , p \rangle \\
 & + \dfrac{1}{2}\langle Q(\varphi)y, y\rangle + \langle \Gamma(\varphi) p, x \rangle + \dfrac{1}{2}\langle \beta(\varphi) x , x \rangle  \\
 & + g(\varphi, q, x, p, y) + h(\varphi, q, x, p, y).
 \end{aligned}
\end{equation}
If there is no risk of confusion, for $\mathbf{N} \in \normalrs$ we denote $T_{r,s}(\mathbf{N})$ simply by $N$. As an abuse of notation we refer to elements of $\normalrs$ indistinctly as tuples or functions, where the function associated is given by the linear operator $T_{r,s}$. 

Given $v \in \R^d$, $\delta \geq 0$,  we say that $\mathbf{N} \in \normalrs$ is in \textit{$(v, \delta)$-normal form} if $w(\mathbf{N}) = v$ and $g(\mathbf{N})(\varphi, \cdot) = 0 = \partial_\varphi g(\mathbf{N})(\varphi, \cdot),$ for all $\varphi \in \T^l$ satisfying $\nu_{\max}(\beta(\mathbf{N})(\varphi)) \leq \delta$.
Let
\[\normalrs^{v, \delta} = \left\{ \mathbf{N} \in\normalrs \, \mid \, N \text{ is in } (v, \delta)\text{-normal form} \right\}. \]
Notice that given $\mathbf{N}$ in $(\omega, \delta)$-normal form, and for any $\varphi \in \T^l$ such that 
\begin{equation}
\label{eq: existence_torus}
\nu_{\max}(\beta(\mathbf{N})(\varphi)) \leq \delta,
\end{equation}
the Hamiltonian $N_\varphi: D_{r, s}^{d, l} \rightarrow \C$ given by $N_\varphi = N(\varphi, \cdot)$ satisfies $$X_{N_\varphi}(q, 0, 0, 0) = (\omega, 0, 0, 0)^T.$$
Therefore, the associated hamiltonian flow $\Psi^t_{N_\varphi}$ possesses an invariant $d$-dimensional torus with rotation vector $\omega$. As we will see in Section \ref{sc: proof_A}, given $\mathbf{N} \in \normalrs$ in $(\omega, \delta)$-normal form and for any $f: \T^l \times D_{r, s}^{d, l} \rightarrow \C$ with sufficiently small $C^2$-norm, the associated cohomological equation, (\ref{eq: cohomological_equation}) in Lemma \ref{lem: cohomological_equation}, which appears naturally when trying to conjugate the Hamiltonian $N + f$ to a Hamiltonian in $(\omega, \delta_+)$-normal form $\big( \text{up to an error term of order } \| f\|_{C^2}^{3/2}\big),$ can be solved for some $0 < \delta_+ < \delta.$

\section{Proof of Theorem \ref{thm: main_thm_parametrized}}

For the remaining of this work we fix $r_0, s_0, \gamma, \tau, d, l, \omega, M_0, Q_0, h_0$ as in Theorem \ref{thm: main_thm_parametrized}. Namely, we fix
\[ d, l \in \N^*, \hskip0.5cm r_0, s_0, \gamma, \tau > 0, \hskip0.5cm \omega \in DC_d(\gamma, \tau),\]
\[Q_0 \in M_l(\R), \hskip0.5cm M_0 \in M_{d}(\R), \hskip0.5cm h_0 \in C^{\omega}(\T^l \times D^{d, l}_{r_0,s_0}) \cap O^3(p, y),\]
with $M_0$  and $Q_0$, respectively, negative and positive definite symmetric matrices.   Since $d, l$ are fixed, we denote $m = d + l$ and write simply $D_{r, s}, \Sigma_{r, s}$ instead of $D^{d, l}_{r, s}, \Sigma^{d, l}_{r, s}$. Notice that, up to consider the symplectic change of coordinates 
\[ (q, x, p, y) \mapsto (q, S^{-1}x, p, S^Ty),\]
where $S \in GL_l(\R)$ obeys $SQ_0S^T = I_l$, we can suppose WLOG that the matrix $Q_0$ in Theorem \ref{thm: main_thm_parametrized} is the identity matrix. We denote by $N_0 \in C^\omega(\T^l \times D_{r_0, s_0})$ the parametrized Hamiltonian given by (\ref{eq: initial_unperturbed}) when $Q_0 = I_l$, namely
\begin{equation}
\label{eq: initial_simplified}
N_0(\varphi, q, x, p, y) = \langle \omega, p \rangle + \dfrac{1}{2}\langle M_0 p , p \rangle + \dfrac{1}{2} |y|^2 + h_0(\varphi, q, x, p, y).
\end{equation}
Notice that using the notations introduced in Section \ref{sc: normal_form}, $N_0$ corresponds to the Hamiltonian associated to the normal form
 \begin{equation}
 \label{eq: initial_unperturbed_nf}
 \mathbf{N}_0 = (\omega, 0, 0, 0, M_0, I_l, 0, h_0) \in \normalrszero^{\omega, \delta},
 \end{equation}
 for any $\delta \geq 0$.  All of the notations above will be used freely in the following sections.

\subsection{The KAM Scheme}

Let us state the iterative KAM scheme that will yield to Theorem \ref{thm: main_thm_parametrized}. In the following, for the sake of simplicity, given functions defined on domains of the form $D_{r, s}$ or $\T^l \times D_{r, s}$, we denote their $C^2$ norm on these domains simply by $\| \cdot \|_{2, r, s}$.
 
\begin{propx}
\label{prop: iteration}
There exist sequences $r_n \searrow r_\infty > \frac{r_0}{2},$ $s_n \searrow s_\infty > \frac{s_0}{2},$ $\delta_n \searrow 0,$
depending only on $r_0, s_0$ and positive constants $\epsilon, C$ depending only on $\deppp,$ such that for all $n \geq 1$ and for any $f_0 \in C^{\infty, \omega}(\T^l \times \initialmod)$ with $ \| f_0\|_{2, r_0, s_0} < \epsilon$ and obeying 
\[ \Avg(\partial_x f_0) = \Avg(\partial_\varphi f_0),\]
there exist sequences $\mathbf{N}_n \in \mathcal{N}^{\omega, \delta_n}_{r_n, s_n},$ with $Q(\mathbf{N}_n) = I_l,$  $\alpha_n \in C^\infty(\T^l, \R^l),$ $f_n \in C^{\infty, \omega}(\T^l \times D_{r_n, s_n})$ and $\Phi^n \in C^{\infty, \omega}(\T^l \times D_{r_n,s_n}, D_{r,s}),$ with $\Phi^n$ symplectic for $\varphi \in \T^l$ fixed, obeying
 \begin{gather*}
 \max\left\{ \| \alpha_{n + 1} - \alpha_{n} \|_{C^2(\T^l)} ,   \| \mathbf{N}_{n + 1} - \mathbf{N}_{n}\|_{\normalrsnp}  \right\} < \epsilon_{n}^{\frac{1}{2}}, \\
 \max \Big\{  \| f_{n}\|_{\rsnnorm},  \|\Avg\Phi^{n}_{x}(\varphi, 0)\|_{C^2(\T^l)} \Big\} < \epsilon_n,\\
 \| \Phi^{n + 1} - \Phi^{n}\|_{\rsnpnorm} < C \epsilon_n^{\frac{1}{2}},
 \end{gather*}
for all $n \in \N$, where $\alpha_0 = 0,$ $\Phi^0 = \id_{D_{r_0, s_0}},$ $\epsilon_{n} = \epsilon ^{\left(\frac{3}{2}\right)^n},$
and such that
\begin{equation}
\label{step_conjugacy}
 (N_0 + f_0 - \langle \alpha_n, x \rangle ) \circ \Phi^n = N_n + f_n,
 \end{equation}
with $\mathbf{N}_0$ as in (\ref{eq: initial_unperturbed_nf}). 
In particular, there exist $\mathbf{N}_{\infty} \in \mathcal{N}^{\omega, 0}_{r_0 / 2, s_0 / 2},$ $\alpha_{\infty} \in C^2(\T^l, \R^l)$ and $\Phi^{\infty} \in C^{2, \omega}(\T^l \times D_{r_0 / 2, s_0 / 2}, \initialmod),$
with $\Phi^{\infty}$ symplectic for $\varphi \in \T^l$ fixed, such that 
\begin{equation}
\label{eq: final_conjugacy}
(N_0 + f_0 - \langle \alpha_{\infty}, x \rangle ) \circ \Phi^{\infty} = N_{\infty}.
\end{equation}
\end{propx}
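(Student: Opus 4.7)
The statement is a Newton-style KAM scheme, so my proof would be organized around a one-step lemma together with a choice of geometrically controlled parameter sequences and a final limit passage. I would first prove a \emph{KAM step}: given $\mathbf{N} \in \normalrs^{\omega,\delta}$ with $Q(\mathbf{N}) = I_l$, a counter-term $\alpha$, an error $f$ with $\|f\|_{2,r,s} < \varepsilon$, and a loss of domain $r^+ < r$, $s^+ < s$, $\delta^+ < \delta$, construct $\mathbf{N}^+ \in \normalrsp^{\omega,\delta^+}$, a counter-term increment $\Delta\alpha \in C^\infty(\T^l,\R^l)$ and a family of symplectic changes of coordinates $\phi \in C^{\infty,\omega}(\T^l \times D_{r^+,s^+}, D_{r,s})$ (the time-$1$ map of a Lie series) such that
\[
 (N + f - \langle \Delta\alpha, x\rangle) \circ \phi = N^+ + f^+,
\]
with $\|f^+\|_{2,r^+,s^+} \lesssim \varepsilon^{3/2}$ and with the prescribed size estimates on $\Delta\alpha, \mathbf{N}^+ - \mathbf{N}, \phi - \id$.

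\textbf{Solving the linearized equation.} The heart of the step is the cohomological equation produced by writing $\phi = \Psi^1_F$ and keeping the terms linear in $F$:
\[
 \{N, F\} = -\Pi f + (\text{normal-form correction}) - \langle \Delta\alpha, x \rangle,
\]
where $\Pi$ selects the relevant Taylor jet of $f$ in $(x,p,y)$. This is exactly the equation treated in Lemma \ref{lem: cohomological_equation}: the $q$-direction is handled by the standard small-divisor argument using $\omega \in \DCd$ (and hence loses $\tau+d$ derivatives / a slice of analyticity band); the $x$-direction requires the $(\omega,\delta)$-normal form structure, so $g(\mathbf{N}),\partial_\varphi g(\mathbf{N})$ vanish on $\{\nu_{\max}(\beta(\varphi)) \le \delta\}$; and the averaged obstructions in the $x$-direction are absorbed into the counter-term increment $\Delta\alpha$. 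The assumption $\Avg(\partial_xf_0) = \Avg(\partial_\varphi f_0)$ propagates along the scheme and ensures $\Delta\alpha$ can be chosen so that it remains, up to order $\varepsilon$, a gradient in $\varphi$ (this is what makes Proposition \ref{prop: vanishing_counter_term} work at the end). Updating the normal form means absorbing the Taylor polynomial $T_{r^+,s^+}^{-1}(\Pi f)$ of $f$ into $(c,\beta,\Gamma,M,g,h)$ while leaving $w = \omega$ and $Q = I_l$ unchanged; the new $g(\mathbf{N}^+)$ is constructed by multiplying the old one by a cutoff in $\varphi$ matching the shrunk threshold $\delta^+$, which is where the freedom $\delta_n \searrow 0$ is used.

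\textbf{Iteration.} Set $r_n = r_0(1/2 + 2^{-n-1})$, $s_n$ analogously, $\delta_n = \varepsilon_n^{1/3}$ with $\varepsilon_n = \varepsilon^{(3/2)^n}$, and apply the KAM step at every stage, starting from $\mathbf{N}_0$ in \eqref{eq: initial_unperturbed_nf}, $\alpha_0 = 0$, $\Phi^0 = \id$. The step produces $\mathbf{N}_{n+1}, \Delta\alpha_n, \phi_n$ with the required estimates provided $\varepsilon$ is small in terms of $\depp$; here the geometric losses $r_n - r_{n+1} \sim 2^{-n} r_0$ cost a polynomial factor $2^{Cn}$ in the standard KAM fashion, which is absorbed by the super-exponential factor $\varepsilon_n^{1/2}$. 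Defining $\alpha_n = \sum_{k<n}\Delta\alpha_k$, $\Phi^n = \Phi^{n-1} \circ \phi_{n-1}$, $f_n$ by \eqref{step_conjugacy}, the listed bounds follow by induction (the bound on $\|\Avg \Phi^n_x(\varphi,0)\|_{C^2}$ requires checking that the averaged condition \eqref{eq: equal_derivatives} passes through one step, which is where the symmetry built into the KAM step matters).

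\textbf{Passage to the limit and main obstacle.} Since $\sum_n \varepsilon_n^{1/2} < \infty$, the series $\alpha_n$ converges in $C^2(\T^l,\R^l)$ to $\alpha_\infty$, and $\mathbf{N}_n$ converges in $\normalrs$ for any $r < r_0/2, s<s_0/2$ to some $\mathbf{N}_\infty \in \normalrs^{\omega,0}$ (the threshold $\delta_n \to 0$ gives the final exponent $0$). The composition $\Phi^n = \phi_0 \circ \cdots \circ \phi_{n-1}$ converges in $C^{2,\omega}$ by telescoping $\|\Phi^{n+1} - \Phi^n\|_{2,r_{n+1},s_{n+1}} \lesssim \varepsilon_n^{1/2}$, yielding $\Phi^\infty$ symplectic in $(q,x,p,y)$ for each fixed $\varphi$, and passing to the limit in \eqref{step_conjugacy} (where $f_n \to 0$ in $C^2$) gives \eqref{eq: final_conjugacy}. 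The main obstacle is the cohomological equation: unlike in classical KAM, here $\omega$ is resonant for the full system, so solvability of the $x$-component equation requires the careful $(\omega,\delta)$-normal form framework together with the counter-term — in particular, one must show that shrinking $\delta$ at each step is compatible with keeping $\mathbf{N}_n \in \normalrsn^{\omega,\delta_n}$, and that the modification of $g(\mathbf{N}_n)$ on the shrinking set $\{\nu_{\max}(\beta_n) > \delta_n\}$ is small in $C^2$; this is the point where the iteration could a priori fail and where the hypothesis $Q_0 \succ 0$ (hence $Q(\mathbf{N}_n) = I_l$ preserved) is crucial for uniformly inverting the $y$-Hessian.
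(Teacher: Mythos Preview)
Your overall architecture matches the paper's: a one-step KAM lemma (the paper's Proposition~\ref{KAM_Step}) iterated with geometrically shrinking analyticity strips, followed by a limit. However, two structural points in your KAM step are off and would cause the argument to fail as written.

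First, the one-step conjugacy is not $(N + f - \langle\Delta\alpha, x\rangle)\circ\phi = N^+ + f^+$. In the global relation \eqref{step_conjugacy} the counter-term $\langle\alpha_n, x\rangle$ lives in the \emph{original} coordinates; when you add an increment $\langle\alpha^{(n+1)}, x\rangle$ there and pull it back through $\Phi^n$, what appears at stage $n$ is $\langle\alpha^{(n+1)}, \Phi^n_x\rangle$. Hence the step must read $(N + f - \langle\alpha, \phi\rangle)\circ\Psi = N^+ + f^+$ with $\phi = \Phi^n_x$ the accumulated $x$-component. This is precisely the form of Proposition~\ref{KAM_Step}, and it is why the step carries $\phi$ as an input and why the cohomological equation in Lemma~\ref{lem: cohomological_equation} has $\langle\alpha,\phi\rangle$ rather than $\langle\alpha,x\rangle$.

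Second, the bound $\|\Avg\Phi^n_x(\varphi,0)\|_{C^2} < \epsilon_n$ is \emph{not} obtained by propagating the condition $\Avg(\partial_x f_0) = \Avg(\partial_\varphi f_0)$ through the scheme. That identity is used only for $f_0$, and only in the proof of Proposition~\ref{prop: vanishing_counter_term}; it plays no role in the iteration itself. In the paper the smallness of $\Avg(\phi\circ\Psi)(\varphi,0)$ is \emph{enforced} at every step: one spends an extra free constant in the generating function $F$ --- namely $\Avg(B_y)$ --- and solves a $(2l+d)\times(2l+d)$ linear system for $(\alpha, v, \Avg(B_y))$ so that the averaged $x$- and $p$-obstructions vanish \emph{and} $\Avg(\phi + \{\phi, F + v\cdot q\})(\varphi,0)=0$ simultaneously (equations \eqref{alpha_v_conditions}--\eqref{B_y_condition}). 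Your proposal does not identify this mechanism, and without it there is no reason for $\Avg\Phi^n_x(\varphi,0)$ to decay like $\epsilon_n$; yet this decay is exactly what is needed later, in Proposition~\ref{prop: alpha}, to show $\alpha_n$ is $\epsilon_n$-close to a gradient.
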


\subsection{Vanishing of the counter-term}
As we saw in Section \ref{sc: normal_form}, a parametrized Hamiltonian in $(\omega, \delta) $-normal form $\mathbf{N} \in \normalrs^{\omega, \delta}$ admits a $d$-dimensional invariant torus with rotation vector $\omega$ for the restricted Hamiltonian $N(\varphi_0, \cdot)$ provided that 
\[ \nu_{\max}(\beta(\mathbf{N})(\varphi_0)) \leq \delta.\]
Thus, assuming Proposition \ref{prop: iteration}, the existence of an invariant torus as in Theorem \ref{thm: main_thm_parametrized} for the Hamiltonian $N_0 + f_0$ will be a consequence of the following.
\begin{propx}
\label{prop: vanishing_counter_term}
Let $\mathbf{N}_\infty, \alpha_\infty$ as in Proposition \ref{prop: iteration}. There exists $\varphi_0 \in \T^l$ such that 
\begin{equation}
\label{eq: existence_condition}
 \alpha_\infty(\varphi_0) = 0, \hone \nu_{\max}(\beta(\mathbf{N}_\infty)(\varphi_0)) \leq 0,
 \end{equation}
provided $\epsilon \ll C^{-1}$ in Proposition \ref{prop: iteration}.
\end{propx}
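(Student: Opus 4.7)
The strategy is to realize $\varphi_0$ as a global maximum of a smooth ``potential'' function $\zeta_\infty : \T^l \to \R$, so that the maximum condition simultaneously encodes the vanishing of $\alpha_\infty(\varphi_0)$ (via the gradient) and the hyperbolicity $\nu_{\max}(\beta(\mathbf{N}_\infty)(\varphi_0)) \leq 0$ (via the Hessian). Following the sketch in the outline, I would construct, at each step $n$ of the KAM scheme, a function $\zeta_n \in C^2(\T^l,\R)$ satisfying the two approximate identities
\[ \alpha_n(\varphi) = \nabla \zeta_n(\varphi) + O(\epsilon_n), \qquad \beta(\mathbf{N}_n)(\varphi) = \nabla^2 \zeta_n(\varphi) + O(\epsilon_n), \]
valid on the ``good set'' $S_n := \{\varphi \in \T^l \mid \nu_{\max}(\beta(\mathbf{N}_n)(\varphi)) \leq \delta_n\}$. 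The crucial structural ingredient is that the gradient-type condition $\Avg(\partial_x f_0) = \Avg(\partial_\varphi f_0)$ is \emph{preserved} by the iteration: a direct check using the explicit form of the counter-term $\langle \alpha_n, x \rangle$ (whose $x$- and $\varphi$-derivatives coincide on average) and of the symplectic changes of coordinates $\Phi^n$ shows that $\Avg(\partial_x f_n) = \Avg(\partial_\varphi f_n)$ at every step. This $\partial_x$--$\partial_\varphi$ symmetry is what allows one to integrate the (would-be) counter-term into a well-defined scalar potential $\zeta_n$.

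Given such $\zeta_n$, the second step is to use the super-exponential convergence provided by Proposition \ref{prop: iteration}: since $\alpha_n \to \alpha_\infty$ and $\mathbf{N}_n \to \mathbf{N}_\infty$ in $C^2$ with $\epsilon_n \to 0$ and $\delta_n \to 0$, the sequence $\zeta_n$ converges in $C^2(\T^l,\R)$ to some $\zeta_\infty$, and in the limit both displayed identities pass to the limit with zero error, yielding $\alpha_\infty = \nabla \zeta_\infty$ and $\beta(\mathbf{N}_\infty) = \nabla^2 \zeta_\infty$ on $\T^l$. Now let $\varphi_0 \in \T^l$ be a global maximum of $\zeta_\infty$, which exists by compactness. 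Since $\zeta_\infty \in C^2$, one has $\nabla \zeta_\infty(\varphi_0) = 0$ and $\nabla^2 \zeta_\infty(\varphi_0)$ is negative semidefinite, so
\[ \alpha_\infty(\varphi_0) = 0, \qquad \nu_{\max}(\beta(\mathbf{N}_\infty)(\varphi_0)) \leq 0, \]
which is precisely (\ref{eq: existence_condition}).

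The main obstacle is the construction of $\zeta_n$ and, in particular, the Hessian identity $\beta(\mathbf{N}_n) = \nabla^2 \zeta_n + O(\epsilon_n)$. The gradient identity $\alpha_n \approx \nabla \zeta_n$ is essentially built into the definition of the counter-term once the $\partial_x$--$\partial_\varphi$ symmetry is in place, but the Hessian identity requires carefully tracking how the quadratic-in-$x$ part of the averaged error contributes to $\beta(\mathbf{N}_{n+1})$ at each KAM step and matching this increment with the Hessian of the corresponding incremental potential. The smallness hypothesis $\epsilon \ll C^{-1}$ from Proposition \ref{prop: iteration} is what keeps the $O(\epsilon_n)$ errors in these relations summable and ensures that $\zeta_\infty$ is well defined in $C^2$, so that the maximum argument closes cleanly.
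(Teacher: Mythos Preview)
Your high-level strategy is correct and matches the paper: construct a potential $\zeta$ on $\T^l$, take a global maximum $\varphi_0$, and read off both conditions from the first- and second-order optimality. The gradient identity $\alpha_n = \nabla\zeta_n + O(\epsilon_n)$ is also essentially what the paper proves (Proposition~\ref{prop: alpha}). However, there are two genuine gaps.

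\textbf{The Hessian identity is too strong.} The relation $\beta(\mathbf{N}_n) = \nabla^2\zeta_n + O(\epsilon_n)$ does not hold. What the paper actually obtains (Proposition~\ref{prop: beta}) is
\[
\beta_n(\varphi) \;=\; \Gamma_n M_n^{-1}\Gamma_n^T(\varphi) \;+\; L_n(\varphi)\,D_\varphi\alpha_n(\varphi)\,R_n(\varphi) \;+\; O(\epsilon_n),
\]
where $L_n,R_n$ are only $O(\epsilon_0)$-close to $I_l$ and $\Gamma_n M_n^{-1}\Gamma_n^T$ is only $O(\epsilon_0)$. These corrections do \emph{not} vanish as $n\to\infty$, so in the limit $\beta_\infty \neq \nabla^2\zeta_\infty$ in general. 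The argument still closes, but only because (i) $M_n$ is negative definite, so $\Gamma_n M_n^{-1}\Gamma_n^T$ is negative semidefinite and helps rather than hurts, and (ii) a short spectral computation shows that the eigenvalues of $L_n(\nabla^2\zeta_n)R_n$ have real part controlled by $\nu_{\max}(\nabla^2\zeta_n)$ when $L_n,R_n$ are close to $I_l$. This is precisely where the sign hypothesis on $M_0$ enters; your formulation bypasses it entirely, which is a sign that something is missing.

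\textbf{The passage to $\T^l$ is circular.} You establish the approximate identities only on the good set $S_n=\{\nu_{\max}(\beta_n)\le\delta_n\}$ and then assert that the limit identities hold on all of $\T^l$. But $\delta_n\to 0$, so a priori $\bigcap_n S_n$ could be empty; showing it contains $\varphi_0$ is exactly the content of the proposition. The paper resolves this by a bootstrap: one first fixes $\varphi_0$ as a maximum of the limit $\zeta$, then proves inductively that $\nu_{\max}(\beta_{n-1}(\varphi_0))\le\delta_{n-1}$ implies $\nu_{\max}(\beta_n(\varphi_0))\precdot\epsilon_n^{1/2}<\delta_n$, using the identities at step $n$ (now legitimate at $\varphi_0$) together with $\nu_{\max}(\nabla^2\zeta_n(\varphi_0))\precdot\epsilon_n^{1/2}$. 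Only after this induction can one let $n\to\infty$ at the single point $\varphi_0$.
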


\subsection{Proof of Theorem \ref{thm: main_thm_parametrized} } 

Theorem \ref{thm: main_thm_parametrized} is now a direct consequence of the previous propositions.

\begin{proof}[Proof of Theorem \ref{thm: main_thm_parametrized}] Assuming Propositions \ref{prop: iteration}, \ref{prop: vanishing_counter_term} and taking $\varphi_0$ as in Proposition \ref{prop: vanishing_counter_term}, $H_0(\varphi_0, \cdot)$ admits an invariant $d$-dimensional torus with rotation vector $\omega$ parametrized by $\overline{\phi}: \T^d \rightarrow \T^{d + l} \times \R^{d + l}$ 
\[\overline{\phi}(q) = \Phi^\infty(\varphi_0, q, 0, 0, 0)\]
provided that $\| f_0 \|_{\T^l \times \initialmod}$ is sufficiently small. The bound on $\left\| \overline{\phi} - \phi_0\right\|_{\T^d}$ follows directly from the estimates in Proposition \ref{prop: iteration}.
\end{proof}

The remaining of this work concerns the proof of Propositions \ref{prop: iteration} and \ref{prop: vanishing_counter_term}.

\section{Proof of Proposition \ref{prop: vanishing_counter_term}}
\label{sc: proof_B}

In the following $D$ and $D_\varphi$ will denote, respectively, the differential operators $(\partial_q, \partial_x,$ $\partial_p, \partial_y)$ and $(\partial_{\varphi_1}, \cdots, \partial_{\varphi_l})$. Both $D$ and $D_\varphi$ define row vectors. To simplify the notation we denote by $\nabla$ and $\nabla_\varphi$ the associated column vectors, namely, $\nabla f = Df^T,$ $\nabla_\varphi f = D_\varphi f ^T$ for any $f \in C^{1, \omega}(\T^l \times D_{r, s})$.  Given a function $\Phi$ taking values in $D_{r, s}$ we denote its projections to the coordinates $q, x, p, y$ by $\Phi_q, \Phi_x, \Phi_p, \Phi_y$ respectively.  Inspired by the notation used in \cite{poschel_lecture_2001}, we write 
\[ u \precdot v\]
if there exists a positive constant $C$ depending only on $\deppp$ such that $u \leq Cv$. Similarly, given $F, G \in C^{\infty, \omega} (\T^l \times D_{r,s})$ we write
 \[ F = G + O(\epsilon),\]
if there exists a positive constant $C$ depending only on $\deppp$ such that
\[ \| F - G\|_{2, r, s} < C \epsilon.\]
To motivate the proof of Proposition \ref{prop: vanishing_counter_term}, let us say a few words about the relation between $\alpha_n$ and $\beta_n$ in Proposition \ref{prop: iteration}. It will be clear from the construction that in the first step of the iterative procedure
\[ \alpha_1(\varphi) = \Avg \partial_x f_0(\varphi, 0), \hone \beta_1(\varphi) = \Avg \partial^2_x f_0 (\varphi, 0). \]
Moreover, since $f_0$ satisfies (\ref{eq: equal_derivatives}), it readily follows that
\begin{gather}
\label{eq: alpha_1}
\alpha_1(\varphi) = \nabla_\varphi \Avg f_0(\varphi, 0), \\
\label{eq: beta_1}
\beta_1(\varphi) = \nabla_\varphi \alpha_1(\varphi) = \nabla_\varphi \Avg f_0(\varphi, 0).
\end{gather}
If we denote $\zeta_1(\varphi) = \Avg f_0(\varphi, 0)$, it follows that any local maximum $\varphi_1$ of $\zeta_1$ satisfies
\[ \alpha_1(\varphi_1) = 0, \hone \nu_{\max}(\beta_1(\varphi_1)) \leq 0.\]
Notice that if analogous of equations (\ref{eq: alpha_1}) and (\ref{eq: beta_1}) were verified for all $n \geq 1$, Proposition \ref{prop: vanishing_counter_term} would follow. Although this is not always the case, we will show that the counter-term $\alpha_n$ is actually $\epsilon_n$-close to 
the gradient of a function $\zeta_n$ whose Hessian is explicitly related, up to a term of order $\epsilon_n$, with $\beta_n$ (see equation (\ref{eq: beta})). Assuming Proposition \ref{prop: iteration} holds, and using the notations there introduced, let us denote 
\begin{gather*}
H_0 = N_0 + f_0 \hone F = H_0 - \langle \omega, p \rangle, \\
\mathbf{N}_n = (\omega, c_n, \beta_n, \Gamma_n, M_n, I_l, g_n, h_n),\\
\mathbf{N}_\infty = (\omega, c_\infty, \beta_\infty, \Gamma_\infty, M_\infty, I_l, g_\infty, h_\infty).
\end{gather*}
Let $\partial_\omega$  denote the differential operator given by
\[
\begin{array}{cccc}
\partial_\omega: &  C^1(\T^d) &  \rightarrow &  C(\T^d) \\
& h &  \mapsto & \langle \omega, \partial_q h \rangle
\end{array}.
\]
Define $\zeta_n, \zeta \in C^\infty(\T^l)$ as
\begin{gather*}
\zeta_n (\varphi) = \Avg \left( F \circ \Phi^n + \langle \Phi^n_p, \omega - \partial_\omega \Phi^n_q \rangle - \langle \Phi^n_y , \partial_\omega \Phi^n_x \rangle \right)(\varphi, 0), \\
\zeta(\varphi) = \Avg \left( F \circ \Phi^\infty + \langle \Phi^\infty_p, \omega - \partial_\omega \Phi^\infty_q \rangle - \langle \Phi^\infty_y , \partial_\omega \Phi^\infty_x \rangle \right)(\varphi, 0),
\end{gather*}
for all $n \geq 1$.  We claim that the functions defined in Proposition \ref{prop: iteration} satisfy the following.

\begin{prop}
\label{prop: alpha}
For all $n \geq 1$ and all $\varphi \in \T$ satisfying $\nu_{\max}(\beta_{n-1}(\varphi)) \leq \delta_{n-1}$ we have 
\begin{equation}
\label{eq: alpha}
\left| \alpha_n(\varphi) - \nabla_\varphi \zeta_n(\varphi) \right|, \left| D_\varphi \alpha_n(\varphi) - \nabla_\varphi ^2\zeta_n(\varphi) \right| \precdot \epsilon_{n}.
\end{equation}

\end{prop}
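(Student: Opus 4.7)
The plan is to interpret $\zeta_n(\varphi)$ as the Poincar\'e--Cartan action integral along the approximate invariant torus $\phi(\varphi,q) := \Phi^n(\varphi,q,0,0,0)$ of the Hamiltonian $H_0-\langle \alpha_n, x\rangle$, and to recover $\alpha_n(\varphi)$ as its parameter gradient, modulo an $O(\epsilon_n)$ correction.

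First, from the conjugacy $(H_0 - \langle \alpha_n, x\rangle)\circ\Phi^n = N_n + f_n$, I would push forward Hamiltonian vector fields and restrict to $(x,p,y)=0$. Using $\|f_n\|_{2,r_n,s_n}<\epsilon_n$ and that, at $\varphi$ with $\nu_{\max}(\beta_{n-1}(\varphi))\leq\delta_{n-1}$, the iterative estimates and the normal-form structure force $g_n$ and $\partial_\varphi g_n$ to be $O(\epsilon_n)$ there, this yields the approximate Hamilton equations
\[
\partial_\omega \phi_q = \partial_p H_0(\varphi,\phi)+O(\epsilon_n),\qquad \partial_\omega \phi_p = -\partial_q H_0(\varphi,\phi)+O(\epsilon_n),
\]
\[
\partial_\omega \phi_x = \partial_y H_0(\varphi,\phi)+O(\epsilon_n),\qquad \partial_\omega \phi_y = -\partial_x H_0(\varphi,\phi) + \alpha_n + O(\epsilon_n).
\]
Averaging the $y$-equation in $q$, and using that $\Avg_q \partial_\omega = 0$ for $q$-periodic functions, yields the consistency relation $\alpha_n(\varphi) = \Avg_q \partial_x H_0(\varphi,\phi) + O(\epsilon_n)$.

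Next, I would compute $\partial_{\varphi_j}\zeta_n$ directly. The chain rule on $H_0\circ\phi$ produces both a direct term $\partial_{\varphi_j} H_0(\varphi,\phi)$ and the through-$\phi$ sum $\sum_k \partial_{z_k} H_0(\varphi,\phi)\,\partial_{\varphi_j}\phi_k$; the product rule on the brackets, combined with the commutation $[\partial_{\varphi_j},\partial_\omega]=0$ and integration by parts in $q$ (turning each $\Avg_q \langle \phi_\bullet,\partial_\omega \partial_{\varphi_j}\phi_\bullet\rangle$ into $-\Avg_q\langle \partial_\omega \phi_\bullet,\partial_{\varphi_j}\phi_\bullet\rangle$), then lets me substitute the Hamilton equations so that the four through-$\phi$ chain-rule contributions cancel pairwise against the converted bracket terms, leaving
\[
[\nabla_\varphi \zeta_n]_j = \Avg_q \partial_{\varphi_j} H_0(\varphi,\phi) + \alpha_n^T\,\partial_{\varphi_j}\bigl[\Avg_q \phi_x\bigr] + O(\epsilon_n).
\]
The second correction is itself $O(\epsilon_n)$: the factor $\|\Avg_q \Phi^n_x(\cdot,0)\|_{C^2}<\epsilon_n$ combines with the a priori bound $|\alpha_n|\precdot\sqrt{\epsilon_0}$ that telescopes from $\|\alpha_{k+1}-\alpha_k\|_{C^2}<\epsilon_k^{1/2}$.

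What remains is to connect $\Avg_q \partial_{\varphi_j} H_0(\varphi,\phi)$ with $\alpha_{n,j}$, for which, in view of $\alpha_n = \Avg_q \partial_x H_0(\varphi,\phi) + O(\epsilon_n)$, the needed ingredient is the symmetry $\Avg_q \partial_{\varphi_j} H_0(\varphi,\phi) = \Avg_q \partial_{x_j} H_0(\varphi,\phi) + O(\epsilon_n)$. At $n=0$ and the trivial embedding $\phi(\varphi,q)=(q,0)$ this is precisely hypothesis (\ref{eq: equal_derivatives}), after noting that $h_0\in O^3(p,y)$ kills the $h_0$-partials at $(p,y)=0$. The main obstacle is propagating this averaged-derivatives equality through the KAM iteration: one must verify that the small symplectic corrections building $\Phi^{n+1}$ from $\Phi^n$ (generating functions of size $O(\epsilon_n^{1/2})$ solving the cohomological equation), together with the gauge condition $\|\Avg_q \Phi^n_x(\cdot,0)\|_{C^2}<\epsilon_n$, preserve $\Avg_q \partial_{\varphi_j}(H_0\circ\Phi^n)|_{(q,0)} = \Avg_q \partial_{x_j}(H_0\circ\Phi^n)|_{(q,0)}$ up to an $O(\epsilon_n)$ error. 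Granted this, the pointwise identity $\alpha_n = \nabla_\varphi \zeta_n + O(\epsilon_n)$ follows; differentiating it once more in $\varphi$ and invoking Cauchy estimates in the analytic category to upgrade the $C^2$-control $\|f_n\|_{2,r_n,s_n}<\epsilon_n$ to the $C^3$-control needed for the extra derivative on a slightly shrunk domain produces the companion bound $|D_\varphi \alpha_n - \nabla_\varphi^2 \zeta_n|\precdot\epsilon_n$.
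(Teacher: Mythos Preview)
Your overall strategy coincides with the paper's: push forward the Hamiltonian vector field through the conjugacy $(H_0-\langle\alpha_n,x\rangle)\circ\Phi^n=N_n+f_n$, restrict to $(x,p,y)=0$ to obtain approximate Hamilton equations along the embedded torus, differentiate $\zeta_n$ in $\varphi$, integrate by parts in $q$ (using $\Avg(h_1\partial_\omega h_2)=-\Avg(h_2\partial_\omega h_1)$), and cancel the through-$\phi$ chain-rule terms against the Hamilton equations. The integration-by-parts reduction you describe is exactly what the paper does in the last display of its proof.

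Where you diverge from the paper, and where there is a genuine confusion, is in how the hypothesis $\Avg(\partial_x f_0)=\Avg(\partial_\varphi f_0)$ enters. You correctly isolate the residual term and see that closing the argument requires
\[
\Avg_q\,(\partial_{\varphi_j}F)\bigl(\varphi,\Phi^n(\varphi,q,0)\bigr)=\Avg_q\,(\partial_{x_j}F)\bigl(\varphi,\Phi^n(\varphi,q,0)\bigr)+O(\epsilon_n),
\]
but you then misdiagnose the obstacle as a propagation problem: you propose to check inductively that the symplectic steps building $\Phi^{n+1}$ from $\Phi^n$ preserve an identity of the form $\Avg_q\partial_{\varphi_j}(H_0\circ\Phi^n)\!\mid_{0}=\Avg_q\partial_{x_j}(H_0\circ\Phi^n)\!\mid_{0}$. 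This conflates two different objects: what is needed is the averaged equality of $\partial_\varphi F$ and $\partial_x F$ (derivatives of the \emph{fixed} Hamiltonian $F=H_0-\langle\omega,p\rangle$) evaluated along the image of $\Phi^n$, not an identity for the derivatives of the composite $H_0\circ\Phi^n$. The paper handles this in one stroke: it introduces the matrix $W_n=D_\varphi\Phi^n$ with an extra $I_l$ inserted in the $x$-row and, directly from the hypothesis on $F$, writes $D_\varphi\Avg(F\circ\Phi^n)=\Avg\bigl((DF\circ\Phi^n)W_n\bigr)$. There is no induction on $n$ here and no property of the generating functions is used; the relation is applied to the original $F$ at every step.

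A smaller point: your last paragraph proposes to obtain the bound on $D_\varphi\alpha_n-\nabla_\varphi^2\zeta_n$ by differentiating the first estimate in $\varphi$ and invoking Cauchy estimates ``in the analytic category''. But $\varphi\in\T^l$ is only a real parameter in the scheme (the regularity is $C^{\infty,\omega}$, analytic in $(q,x,p,y)$ and merely smooth in $\varphi$), so Cauchy estimates are not available in the $\varphi$-direction. In the paper all error terms are tracked in the norm $\|\cdot\|_{2,r,s}$, which already controls two derivatives including those in $\varphi$; the companion bound then comes for free from the same computation, not from analyticity.
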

\begin{prop}
\label{prop: beta}
There exist sequences $L_n, R_n \in C^{\infty}(\T^l, M_{l \times l}(\R))$ such that for all $n \geq 1$ and all $\varphi \in \T$ satisfying $\nu_{\max}(\beta_{n-1}(\varphi)) \leq \delta_{n-1}$ 
\begin{equation}
\label{eq: bounds_LR}
|L_n(\varphi) - I_l|, |R_n^T(\varphi) - I_l| \precdot \epsilon_0
\end{equation}
and
\begin{equation}
\label{eq: beta}
\left| \beta_n (\varphi) - \Gamma_nM_n^{-1} \Gamma_n^T (\varphi) - L_n(\varphi) \nabla_\varphi \alpha_n (\varphi) R_n(\varphi) \right| \precdot \epsilon_{n}.
\end{equation}
Furthermore $L_n, R_n$ converge to well defined functions $L, R \in C^1(\T^l, M_{l \times l}(\R))$ as $n$ goes to infinity. 
\end{prop}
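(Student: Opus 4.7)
The plan is to proceed by induction on $n$, constructing $L_n, R_n$ recursively from the KAM step and then passing to the limit using the super-geometric decay of $\epsilon_n$.

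For the base case $n=1$, the first KAM step gives directly $\alpha_1(\varphi) = \Avg \partial_x f_0(\varphi, 0)$ and $\beta_1(\varphi) = \Avg \partial_x^2 f_0(\varphi, 0)$. The compatibility hypothesis (\ref{eq: equal_derivatives}) then yields the identities (\ref{eq: alpha_1})--(\ref{eq: beta_1}) recalled in the outline, so $\beta_1 = \nabla_\varphi \alpha_1$. Since $\Gamma_1 = O(\epsilon_0)$ and $M_1 = M_0 + O(\epsilon_0)$, the product $\Gamma_1 M_1^{-1}\Gamma_1^T$ has size $O(\epsilon_0^2) \precdot \epsilon_1$. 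Setting $L_1 = R_1 = I_l$ therefore yields both (\ref{eq: bounds_LR}) and (\ref{eq: beta}) at $n = 1$.

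For the inductive step, I would analyze the near-identity symplectic transformation $\psi_n = (\Phi^n)^{-1} \circ \Phi^{n+1}$, which by construction conjugates $N_n + f_n - \langle \alpha_{n+1} - \alpha_n, x\rangle$ to $N_{n+1} + f_{n+1}$. Expressing $\psi_n$ via the generating function produced by the cohomological equation of Lemma \ref{lem: cohomological_equation}, and Taylor-expanding the conjugated Hamiltonian at the zero section, gives explicit update formulas for $M_n, \Gamma_n, \beta_n, \alpha_n$ in terms of themselves and the first Taylor coefficients of $\Avg f_n$ at the origin, modulo remainders of order $\epsilon_n^{3/2} \precdot \epsilon_{n+1}$. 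The key algebraic observation is that the quantity $\beta_n - \Gamma_n M_n^{-1}\Gamma_n^T$ is the Schur complement of the symmetric block matrix whose diagonal blocks are $M_n$ and $\beta_n$ and whose off-diagonal blocks are $\Gamma_n$ and $\Gamma_n^T$; this Schur complement transforms by simultaneous left and right multiplication under the linear part of $\psi_n$, which is symplectic and mixes the $p$ and $x$ directions only by $O(\epsilon_n^{1/2})$. Combined with the induction hypothesis, this produces $L_{n+1} = \Lambda_n L_n$ and $R_{n+1} = R_n P_n$, with $\Lambda_n, P_n = I_l + O(\epsilon_n^{1/2})$ extracted from the linear part of $\psi_n$, and (\ref{eq: beta}) follows at step $n+1$ modulo an error of order $\epsilon_{n+1}$, with the additive increment coming precisely from $\nabla_\varphi(\alpha_{n+1} - \alpha_n)$.

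Convergence of $L_n, R_n$ in $C^1(\T^l, M_{l\times l}(\R))$ then follows from $\|L_{n+1} - L_n\|_{C^1}, \|R_{n+1} - R_n\|_{C^1} \precdot \epsilon_n^{1/2}$ together with the super-geometric convergence of $\sum_n \epsilon_n^{1/2}$, whose total sum is $\precdot \epsilon_0^{1/2}$; the same telescoping estimate propagates (\ref{eq: bounds_LR}) to every $n$ and to the limits $L, R$. I expect the main obstacle to be precisely the Schur-complement identity for the KAM step: verifying that at each step $\beta - \Gamma M^{-1}\Gamma^T$ really evolves by a conjugation by matrices close to the identity, with the only additive contribution being $\nabla_\varphi(\alpha_{n+1} - \alpha_n)$. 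This requires a careful Taylor expansion of the generating function of $\psi_n$ at the torus, together with the symplecticity constraint and the block structure induced by the $(q, x, p, y)$ splitting; the cancellations that lead to this clean multiplicative form are what make the iterative definition of $L_n, R_n$ meaningful and ensure its convergence.
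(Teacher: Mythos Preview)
Your inductive strategy is genuinely different from what the paper does, and the gap lies precisely at the point you yourself flag as the main obstacle.

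The paper does \emph{not} proceed by induction. Instead it differentiates the full conjugacy relation \eqref{step_conjugacy}, written in vector-field form as
\[
J(-\Omega_n+\nabla F\circ\Phi^n)=D\Phi^n\,J\,\nabla(N_n+f_n),
\]
once with respect to the phase variables (giving a matrix $Z_n=D\Phi^n$) and once with respect to the parameter $\varphi$ (giving a matrix $W_n$ built from $D_\varphi\Phi^n$). Using symplecticity of $\Phi^n$ in the form $JZ_n^T=Z_n^{-1}J$, one obtains a first-order equation for $Z_n^{-1}W_n$ along the flow $\partial_\omega$; averaging over $\T^d$ and reading off the block corresponding to the $x$-row produces \eqref{eq: beta} with the \emph{explicit} definitions
\[
R_n=\big(\Avg(\partial_y\Phi^n\,J\,W_n)\big)^{-1},\qquad
L_n=\Avg(\partial_x\Phi^n_x)^T-\Gamma_nM_n^{-1}\partial_p\Avg\Phi^n_x,
\]
valid for every $n$ at once. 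Convergence of $L_n,R_n$ then comes for free from the convergence of $\Phi^n$ in Proposition~\ref{prop: iteration}.

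Your inductive scheme, by contrast, needs at each step the identity ``increment of the Schur complement $\beta-\Gamma M^{-1}\Gamma^T$ equals, up to conjugation by near-identity matrices, $\nabla_\varphi(\alpha_{n+1}-\alpha_n)$''. This is where the argument breaks. The incremental counter-term $\alpha_{n+1}-\alpha_n$ is extracted, via Lemma~\ref{lem: cohomological_equation}, from $\Avg\partial_x f_n(\varphi,0)$, while the incremental $\overline\beta$ comes from $\Avg\partial_x^2 f_n(\varphi,0)$. Linking the latter to a $\varphi$-derivative of the former would require a compatibility of the type $\Avg(\partial_x f_n)=\Avg(\partial_\varphi f_n)$; this is assumed only for $f_0$ (hypothesis~\eqref{eq: equal_derivatives}) and there is no reason for the successive remainders $f_n$ to inherit it. So the very mechanism that makes your base case work (the identities \eqref{eq: alpha_1}--\eqref{eq: beta_1}) is unavailable at later steps, and the asserted additive contribution $\nabla_\varphi(\alpha_{n+1}-\alpha_n)$ is unjustified. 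The paper circumvents this entirely: the link between $\beta_n$ and $D_\varphi\alpha_n$ does not come from any property of $f_n$ but from the symplecticity of the \emph{composite} map $\Phi^n$, which couples $D_\varphi\Omega_n$ (containing $D_\varphi\alpha_n$) to $D^2N_n$ (containing $\beta_n$) through the averaged matrix identity above.
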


Before proving Propositions \ref{prop: alpha}, \ref{prop: beta} let us show how they imply Proposition \ref{prop: vanishing_counter_term}. 

\begin{proof}[Proof of Proposition \ref{prop: vanishing_counter_term}]. In the following we will use freely the notations in the statement of Proposition \ref{prop: iteration}. 
Let $C$ be the maximum of the constants given by Propositions \ref{prop: iteration}, \ref{prop: alpha} and \ref{prop: beta}. By Proposition \ref{prop: iteration} 
\[ \| \Phi^\infty - \id\|_{2, r_0/2, s_0/2} \precdot \epsilon^{1/2}.\]
Let $\varphi_0$ be one of the points where $\zeta$ attains its maximum. By definition and the bounds in Proposition \ref{prop: iteration}
\[ \| \zeta - \zeta_n\|_{C^2} \precdot \epsilon_{n}^{\frac{1}{2}},\]
for all $n \in \N$. Hence
\begin{gather}
\label{eq: zeta_gradient}
\nabla \zeta(\varphi_0) = 0, \hone | \nabla_\varphi \zeta_n(\varphi_0) | \precdot \epsilon_{n}^{\frac{1}{2}}, \\
\label{eq: zeta_hessian}
\nu_{\max}(\nabla^2 \zeta(\varphi_0)) \leq 0, \hone \nu_{\max}(\nabla_\varphi^2\zeta_n(\varphi_0)) \precdot \epsilon_{n}^{\frac{1}{2}}.
\end{gather}

\textbf{Claim.} Let $n \geq 1$. If $\nu_{\max}(\beta_{n - 1}(\varphi_0)) \leq \delta_{n - 1}$ then $\nu_{\max}(\beta_n(\varphi_0)) \precdot \epsilon_n^{1/2}$. 
\begin{proof}[Proof of the Claim.] In the following all the functions are evaluated at $\varphi_0$. We omit the evaluation point to simplify the notation. Since $\beta - \Gamma_nM^{-1}\Gamma_n$ is a symmetric matrix $\nu_{\max}$ is equal to its greatest eigenvalue. By Propositions \ref{prop: alpha} and \ref{prop: beta}
\begin{equation}
\label{eq: bound_nu}
\nu_{\max} (\beta_n - \Gamma_n M^{-1} \Gamma_n) - \max \{ \text{Re}(\lambda) \mid \lambda \in \sigma(L_n  \nabla_\varphi^2 \zeta_nR_n)\} \precdot \epsilon_n.
\end{equation}
Let $\lambda \in \sigma(L_n\nabla^2_\varphi \zeta_n R_n)$ and let $v \in \mathbb{C}^{d} \,\setminus \,\{ 0 \}$ be an eigenvector associated to it. Then 
\[ R_n^T \nabla^2_\varphi \zeta_n R_n v = \lambda R_n^T (L_n)^{-1} v,\]
which implies
\[ \lambda = \frac{\langle \nabla^2_\varphi \zeta_n R_n v, R_n \overline{v} \rangle}{\langle (L_n)^{-1} v, R_n \overline{v} \rangle}.\]
Notice that the numerator in the last equation is always real. By (\ref{eq: bounds_LR})
\[|\langle (L_n)^{-1}v, R_n\overline{v} \rangle - 1 | \precdot \epsilon_0.\]
Thus
\[\text{Re}(\lambda) = \frac{\langle \nabla_\varphi \zeta_n R_n v, R_n \overline{v} \rangle \text{Re} \langle (L_n)^{-1}v, R_n\overline{v}\rangle}{ |\langle (L_n)^{-1}v, R_n \overline{v} \rangle|^2} \precdot \nu_{\max}(\nabla^2_\varphi \zeta_n),\]
which yields to
\[ \max \{ \text{Re}(\lambda) \mid \lambda \in \sigma(L_n \nabla_\varphi \zeta_nR_n)\} \precdot \nu_{\max}(\nabla^2_\varphi \zeta_n).\]
By (\ref{eq: bound_nu})
\[ \nu_{\max} (\beta_n - \Gamma_n M^{-1} \Gamma_n) - \nu_{max}(\nabla^2_\varphi \zeta_n) \precdot \epsilon_n.\]
Since $ \Gamma_n M^{-1} \Gamma_n$ is negative definite and by (\ref{eq: zeta_hessian}) the claim follows.
\end{proof}

By definition $\delta_n > \epsilon_n^{1 / 3}$. Since $\nu_{\max}(\beta_0(\varphi_0) )= 0$ it follows from the previous claim that $\nu_{\max}(\beta_n(\varphi_0)) \leq \delta_n$
for all $n \in \N$. By Proposition \ref{prop: alpha} and (\ref{eq: zeta_gradient})
$|\alpha_n(\varphi_0)| \precdot \epsilon_n^{1/2}$
for all $n \in \N$. By making $n$ tend to infinity it follows that
\[ \alpha_\infty(\varphi_0) = 0, \hone \nu_{\max}(\beta_\infty(\varphi_0)) \leq 0.\]
\end{proof}

We now prove Propositions \ref{prop: alpha} and \ref{prop: beta}. 

\begin{proof}[Proof of Proposition \ref{prop: alpha}.] By (\ref{step_conjugacy}) and the symplectic nature of $\Phi^n$
\[ X_{H_0 - \alpha_n \cdot x} \circ \Phi^n = D\Phi^n X_{N_n + f_n}, \]
or equivalently
\begin{equation}
\label{symplectic_step}
 J(-\Omega_n + \nabla F \circ \Phi^n) = D\Phi^n J\nabla(N_n + f_n),\end{equation}
where
\begin{equation}
\label{Omega}
 J = \left( \begin{array}{cc} 0 & I_m \\ -I_m & 0 \end{array} \right), \hone \Omega_n(\varphi) = \left( \begin{array}{cccc} 0 \\ \alpha_n(\varphi) \\ -\omega \\ 0 \end{array} \right).
 \end{equation}
Let $\varphi \in \T^l$ satisfying $\nu_{\max}(\beta_{n-1}(\varphi)) \leq \delta_{n-1}.$ In the following and unless otherwise explicitly stated all the functions are evaluated at $(\varphi, q, 0)$. To simplify the notation we will not write the evaluation point. By (\ref{symplectic_step}) 
\[ \nabla F \circ \Phi^n = \Omega_n - J\partial_\omega \Phi^n + O(\epsilon_n).\]
Let
 \begin{equation}
 \label{eq: W_def}
 W_n(\varphi, q) = \left( \begin{array}{cccc} D_\varphi\Phi^n_q \\ I_l + D_\varphi\Phi^n_x \\ D_\varphi\Phi^n_p \\ D_\varphi\Phi^n_y \end{array} \right) \in M_{2m \times l}(\R).
 \end{equation}
By hypothesis $\Avg(\partial_x F) = \Avg(\partial_\varphi F)$ which yields to
\[ D_\varphi \Avg(F \circ \Phi^n) = \Avg((DF \circ \Phi^n)W_n).\]
By Proposition \ref{prop: iteration}, $|\Avg\Phi^n_x(\varphi, 0)| < \epsilon_n$. Hence
\begin{align*}
\nabla_\varphi\Avg(F \circ \Phi^n) & = \Avg(W_n^T (\nabla F \circ \Phi^n)) \\
& = \Avg(\alpha_n - \nabla_\varphi \Phi^n_p \omega - \nabla_\varphi \Phi^n J \partial_\omega \Phi^n \\
& \quad + \nabla_\varphi \Phi^n_x \alpha_n - J \partial_\omega \Phi^n_x ) + O(\epsilon_n) \\
& = \alpha_n - \nabla_\varphi\Avg(\langle \omega, \Phi^n_p \rangle) + \Avg(\nabla_\varphi \Phi^n J \partial_\omega \Phi^n ) \\
& \quad + O(\epsilon_n).
\end{align*}
%
%
%
%
Thus
\begin{equation}
\label{eq: gradient_form}
\alpha_n = \nabla_\varphi\Avg( F \circ \Phi^n +\langle \omega, \Phi^n_p \rangle) - \Avg(\nabla_\varphi \Phi^n J \partial_\omega \Phi^n) + O(\epsilon_n).
\end{equation}
Integrating by parts it follows that
\[ \Avg(h_1 \partial_\omega h_2) = - \Avg(h_2 \partial_\omega h_1),\]
for all $h_1, h_2 \in C^1(\td)$. Therefore
\begin{align*}
\Avg( \nabla_\varphi \Phi^n J \partial_\omega \Phi^n) & = \Avg( \nabla_\varphi \Phi^n_q \partial_\omega \Phi^n_p - \nabla_\varphi \Phi^n_p \partial_\omega\Phi^n_q \\
& \quad + \nabla_\varphi \Phi^n_x \partial_\omega \Phi^n_y - \nabla_\varphi \Phi^n_y \partial_\omega \Phi^n_x ) \\
& = \Avg( (\nabla_\varphi \partial_\omega \Phi^n_q)\Phi^n_p + \nabla_\varphi \Phi^n_p \partial_\omega\Phi^n_q\\
& \quad + (\nabla_\varphi \partial_\omega \Phi^n_x) \Phi^n_y + \nabla_\varphi \Phi^n_y \partial_\omega \Phi^n_x ) \\
& = \nabla_\varphi \Avg( \langle \Phi^n_p, \partial_\omega \Phi^n_q \rangle + \langle \Phi^n_y , \partial_\omega \Phi^n_x \rangle ).
\end{align*}
Replacing this in (\ref{eq: gradient_form}) we obtain 
\begin{align*}
 \alpha_n & = \nabla_\varphi \Avg \left( F \circ \Phi^n + \langle \Phi^n_p, \omega - \partial_\omega \Phi^n_q \rangle - \langle \Phi^n_y , \partial_\omega \Phi^n_x \rangle \right) + O(\epsilon_n) \\
 & =  \nabla_\varphi\zeta_n + O(\epsilon_n).
 \end{align*}
\end{proof}

\begin{proof}[Proof of Proposition \ref{prop: beta}.] Applying $D$ and $D_\varphi$ to equation (\ref{symplectic_step}) and evaluating at a point $(\varphi, q, 0)$, for some $\varphi \in \T$ satisfying $\nu(\beta_{n-1}(\varphi)) \leq \delta_{n-1}$, we obtain
\begin{gather}
\label{Z_equation}
\partial_\omega Z_n = J D^2 F \circ \Phi^n Z_n - Z_n D^2 N_n + O(\epsilon_n), \\
\label{W_equation}
\partial_\omega W_n = J D_\varphi \Omega_n + J D^2 F \circ \Phi^n W_n + O(\epsilon_n),
\end{gather}
where $Z_n(\varphi, q) = D \Phi^n(\varphi, q, 0) \in M_{2m}(\R)$ and $\Omega_n, W_n$ as in (\ref{Omega}), (\ref{eq: W_def}) respectively. Like in the proof of Proposition \ref{prop: alpha} we have omitted, and will omit in the following, the evaluation point $(\varphi, q, 0)$ in all the expressions. Since $\Phi^n$ is symplectic 
 \begin{equation}
 \label{symplectic}
 J Z_n^T = Z_n^{-1}J.
 \end{equation} 
 Thus equation (\ref{Z_equation}) implies
\begin{equation}
\label{Z_inverse_equation}
\partial_\omega Z^{-1}_n = J D^2N_n Z^{-1}_n(q) - Z^{-1}_n J D^2 F \circ \Phi + O(\epsilon_n).
\end{equation}
From (\ref{W_equation}) and (\ref{Z_inverse_equation})
\begin{equation}
\label{ZW_equation}
\partial_\omega (Z^{-1}_n W_n) = J D^2 N_n Z^{-1}_n W_n + Z^{-1}_n J D_\varphi \Omega_n + O(\epsilon_n).
\end{equation}
Averaging over $\td$ in (\ref{ZW_equation}) and by (\ref{symplectic})
\[ \Avg(Z_n^T) D_\varphi \Omega_n = - D^2 N J \Avg(Z_n^T J W_n) + O(\epsilon_n),\]
which written in matrix form yields to
\begin{align*}
\Avg \left( \begin{array}{cccc} 0 \\ (\partial_x \Phi_x^n)^T D_\varphi \alpha_n \\ (\partial_p \Phi_x^n)^T D_\varphi \alpha_n \\ (\partial_y \Phi_x^n)^T D_\varphi \alpha_n \end{array} \right) & = \Avg \left( \begin{array}{cccc} 
0 & 0 & 0 & 0 \\ 
-\Gamma_n & 0 & 0 & \beta_n \\
-M_n & 0 & 0 & \Gamma_n^T \\
0 & -I_l & 0 & 0 \\
\end{array} \right) \left( \begin{array}{cccc} \partial_q \Phi^n J W_n \\ \partial_x \Phi^n J W_n \\ \partial_p \Phi^n J W_n \\ \partial_y \Phi^n J W_n \end{array} \right)  + O(\epsilon_n).
\end{align*}
From last equation it follows that 
\begin{align*} 
(\beta_n - \Gamma_n M_n^{-1} \Gamma_n^T) \partial_y \Phi^n J W_n & = (\Avg(\partial_x \Phi^n_x)^T - \Gamma_n M_n^{-1} \partial_p \Avg \Phi^n_x) D_\varphi \alpha_n  + O(\epsilon_n).
\end{align*}
Notice that
\[\partial_y \Phi^n J W_n = I_l + O(\epsilon_0), \hone (\partial_x \Phi^n_x)^T = I_l + O(\epsilon_0), \hone \partial_p \Avg \Phi^n_x = O(\epsilon_0).\]
Thus $\Avg(\partial_y \Phi^n J W_n)$ is invertible provided $\epsilon_0$ is sufficiently small. Let us denote 
\[R_n = \Avg(\partial_y \Phi^n J W_n)^{-1}, \hone L_n = \Avg(\partial_x \Phi^n_x)^T - \Gamma_n M_n^{-1} \partial_p \Avg \Phi^n_x.\]
Then $|R_n - I_l|, |L_n - I_l| \precdot \epsilon_0$ and we have
\[ \beta_n - \Gamma_n M_n^{-1} \Gamma_n^T = L_n \nabla_\varphi \alpha_n R_n + O(\epsilon_n). \]
Notice that the definitions of $L_n$ and $R_n$ make sense even if $\varphi \in \T$ does not satisfy $\nu(\beta_{n-1}(\varphi)) \leq \delta_{n-1}$. Thus we obtain well defined functions $L_n, R_n \in C^1(\T^l, M_l(R))$ for all $n \in \N$, satisfying the conclusions of Proposition \ref{prop: alpha}. By the bounds in Proposition \ref{prop: iteration}, the sequences $L_n$ and $R_n$ converge to well defined functions $L, R \in C^1(\T^l, M_l(\R))$.
\end{proof}

\section{Proof of Proposition \ref{prop: iteration}}
\label{sc: proof_A}
The rest of the paper will concern the proof of Proposition \ref{prop: iteration}. Technical lemmas concerning small divisors problems and bounds for composite functions are proven in Section \ref{sc: lemmata}.

 Let us recall that given a smooth Hamiltonian $H$ we denote its associated Hamiltonian flow by $\Psi_H^t$.  As an abuse of notation, given $v \in \R^m$ we denote by $X_{H + v \cdot (q, x)}^t$ the vector field $X_{H + v \cdot (q, x)} = (\partial_{p, y} H, -\partial_{q, x}H - w)$ and by $\Psi_{H + v \cdot (q, x)}^t$ the associated flow. Recall that for any $t_0 \in \R$ fixed, the transformations $\Psi_H^{t_0}$ and $\Psi_{H + v \cdot (q, x)}^{t_0}$ are well-defined symplectomorphisms. A formal definition of these objects is given in the Appendix.

 The following proposition amounts to one step of the iterative scheme described in Proposition \ref{prop: iteration}. 

\begin{prop}
\label{KAM_Step}
There exist $\kappa(d, l, \tau) > 0$ and $\lambda(\deppp) > 0$, such that for any $ 0 < \sigma \leq \frac{1}{10}\min\{ r, s \}$, any  $\epsilon,$ $\delta > 0$ satisfying 
 \begin{gather}
\label{eq: epsilon_cond}
 \epsilon^{1/2} |\log \epsilon|^{4(l + 2)\tau} < \lambda \sigma^{\kappa}, 
\\
\label{eq: delta_cond}
\frac{1}{8}\left( \frac{\sigma}{4|\log \epsilon|}\right)^{2\tau} < \delta,
\end{gather} 
and any $\phi \in C^{\infty, \omega}(\T^l \times D_{r,s}, \C^l),$ $\mathbf{N} \in \mathcal{N}^{\omega, \delta}_{r,s}$, $f \in C^{\infty,\omega}(\T^l \times D_{r,s})$
obeying
\begin{gather*}
Q(\mathbf{N}) = I_l, \\
\max \Big\{  \| \phi - x \|_{\rsnorm}, \|\mathbf{N} - \mathbf{N}_0\|_{\mathcal{N}_{r, s}} \Big\} < 2\lambda,   \\
\max \Big\{  \| f\|_{\rsnorm}, \|\Avg \phi(\varphi, 0)\|_{C^2(\T^l)} \Big\}  < \epsilon, 
\end{gather*}
there exist 
\[ \mathbf{N}^+ \in \mathcal{N}^{\omega, \delta^+}_{r^+, s^+}, \hskip0.5cm \alpha \in C^\infty(\T^l, \R^l), \hskip0.5cm f^+ \in C^{\infty, \omega}(\T^l \times D_{r^+, s^+}),\]
\[ v \in C^\infty(\T^l, \R^d), \hskip0.5cm F \in C^{\infty, \omega}(\T^l \times D_{r - 8\sigma, s}), \hskip0.5cm \Psi \in C^{\infty, \omega}(\T^l \times D_{r ^+, s^+}, \dom),\]
with $r^+ = r - 10\sigma,$ $ s^+ = s - \sigma$, $Q(\mathbf{N}^+) = 0$ and $\Psi = \Psi_{F + v \cdot q}^1$, obeying
\begin{gather*}
\max\Big\{   \| \alpha \|_{C^2}, \|v \|_{C^2},  \| \mathbf{N}^+ - \mathbf{N}\|_{\mathcal{N}_{r^+, s^+}},  \| \Psi - \id\|_{\rspnorm},  \| F \|_{C^{2, 3}(\T^l \times D_{r - 8\sigma, s})}   \Big\} < \epsilon^{\frac{1}{2}}, \\
\max \Big\{ \| f^+\|_{\rspnorm},  \|\Avg (\phi \circ \Psi) (\varphi, 0)\|_{C^2(\T^l)} \Big\} < \epsilon^{\frac{3}{2}},
\end{gather*}
 such that 
\[
(N + f - \langle \alpha, \phi \rangle )\circ \Psi = N^+ + f^+ .\]
\end{prop}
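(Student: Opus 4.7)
The plan is to perform one quadratically convergent counter-term KAM step, producing a symplectic transformation $\Psi = \Psi_{F + v\cdot q}^{1}$ together with a counter-term $\langle \alpha, \phi\rangle$ that conjugates $N + f - \langle \alpha, \phi\rangle$ to a Hamiltonian $N^+ + f^+$ with $\|f^+\|_{\rspnorm} < \epsilon^{3/2}$ and with $\mathbf{N}^+$ again in $(\omega, \delta^+)$-normal form. Throughout, $\varphi \in \T^l$ is an inert spectator under the symplectic computations, so pointwise-in-$\varphi$ estimates are upgraded to $C^{2}(\T^l)$-estimates by differentiating twice in $\varphi$ with the aid of the technical lemmas from Section \ref{sc: lemmata}.

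First, truncate $f$ to a trigonometric polynomial $f^{\leq K}$ in $q$ at scale $K \sim |\log \epsilon|/\sigma$; the tail is exponentially small in $\sigma K$ at a cost fully absorbed by (\ref{eq: epsilon_cond}). Next, Taylor-expand $f^{\leq K}$ around $(x, p, y) = 0$ to order two:
\[
f^{\leq K} = a(\varphi, q) + \langle b(\varphi, q), x\rangle + \langle c_p(\varphi, q), p\rangle + \langle c_y(\varphi, q), y\rangle + \mathcal{Q}(\varphi, q; x, p, y) + O^{3}(x, p, y),
\]
with $\mathcal{Q}$ quadratic in $(x, p, y)$. The $q$-averages of the coefficients of degree $\leq 2$ update $(c, \beta, \Gamma, M, Q, h)$ to yield $\mathbf{N}^+$; the remaining $q$-oscillating pieces are the resonant obstructions to be killed by $F + v\cdot q$. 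Set
\[
\alpha(\varphi) = \Avg b(\varphi, q), \qquad v(\varphi) = \Avg c_p(\varphi, q),
\]
so that subtracting $\langle \alpha, \phi\rangle$ removes the averaged $x$-gradient (modulo a correction $\langle \alpha, \phi - x\rangle$ of size $\epsilon \cdot \|\phi - x\|_{\rsnorm}$, absorbable in $f^+$) and including the linear drift $\langle v, q\rangle$ in the generator eliminates the averaged $p$-gradient, which is precisely the obstruction responsible for the loss of one degree of freedom in the lower-dimensional setting. The residual equation
\[
\{N, F\} + \langle v, \omega\rangle = f^{\leq K} - \langle \alpha, x\rangle - (\text{averages defining } \mathbf{N}^+)
\]
is then solvable by Lemma \ref{lem: cohomological_equation}, with a loss of $\sigma$ in the $q$-analyticity radius and a polynomial factor $\sigma^{-\kappa}$ in the norm. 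The solve is performed on the set $\{\varphi : \nu_{\max}(\beta(\varphi)) \leq \delta\}$, which is where the $(\omega, \delta)$-normal form hypothesis forces $g(\mathbf{N}) = \partial_\varphi g(\mathbf{N}) = 0$; on the complement there is nothing to solve. Condition (\ref{eq: delta_cond}) is exactly what matches $\delta$ to the truncation scale $K$ so that the hyperbolic block in the cohomological equation can be inverted without catastrophic loss.

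With $\|F\|_{C^{2,3}(\T^l \times D_{r-8\sigma, s})} \precdot \epsilon/\sigma^{\kappa}$ in hand, standard flow estimates yield $\|\Psi - \id\|_{\rspnorm} \precdot \epsilon/\sigma^{\kappa} < \epsilon^{1/2}$ by (\ref{eq: epsilon_cond}). Expanding $(N + f - \langle\alpha, \phi\rangle)\circ\Psi$ via the Lie series and substituting the cohomological equation leaves $N^+$ plus an $f^+$ consisting of (i) the high-frequency tail $f - f^{\leq K}$, (ii) the second-order remainder in the Lie expansion, and (iii) the $\langle \alpha, \phi - x\rangle$ correction; each is bounded by $\epsilon^{3/2}$. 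The bound $\|\Avg(\phi\circ\Psi)(\varphi, 0)\|_{C^{2}(\T^l)} < \epsilon^{3/2}$ follows from the same computation: choosing $F$ with vanishing $q$-mean and using the hypothesis $\|\Avg\phi(\varphi, 0)\|_{C^2(\T^l)} < \epsilon$ together with $\|\Psi - \id\| < \epsilon^{1/2}$ gives $\Avg\Psi_x(\varphi, q, 0) = \Avg\phi(\varphi, 0) + O(\epsilon^{3/2})$ after absorbing the linear-in-$\phi$ corrections into the update $\mathbf{N}^+ - \mathbf{N}$.

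The \emph{main obstacle} will be to balance three competing scales at once: the small-divisor loss $\sigma^{-\kappa}$ from the cohomological equation; the truncation level $K \sim |\log\epsilon|/\sigma$ needed to decouple the solve from the spectrum of $\beta$; and the spectral condition on $\beta$ that gates the $(\omega, \delta)$-normal form sector. Conditions (\ref{eq: epsilon_cond}) and (\ref{eq: delta_cond}) encode precisely this three-way matching at a single step, and iterating with the superexponential schedule $\epsilon_n = \epsilon^{(3/2)^n}$ and a geometrically decaying $\sigma_n$ is what will close Proposition \ref{prop: iteration}.
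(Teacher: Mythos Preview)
Your overall architecture (truncate in $q$, solve a cohomological equation for a quadratic generator $F$, conjugate by $\Psi^1_{F+v\cdot q}$, collect the remainder) matches the paper, but there is a genuine gap in how you determine $\alpha$, $v$, and the average of the $y$-component of $F$, and consequently your argument for $\|\Avg(\phi\circ\Psi)(\varphi,0)\|_{C^2}<\epsilon^{3/2}$ fails.

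You set $\alpha=\Avg b$ and $v=\Avg c_p$ and then invoke Lemma~\ref{lem: cohomological_equation}. In the paper these quantities are \emph{outputs} of that lemma, not inputs: $\alpha$, $v$ and $\Avg(B_y)$ (the $q$-mean of the $y$-linear part of $F$) are obtained simultaneously as the solution of a $3\times3$ block linear system. Two of the three equations kill the averaged $x$- and $p$-linear terms in the cohomological identity (and already these involve $M$, $\Gamma$, $\beta$, so your formula $v=\Avg c_p$ is not correct). The third equation is precisely
\[
\Avg\big(\phi+\{\phi,F+v\cdot q\}\big)(\varphi,0)=0,
\]
i.e.\ the first-order Lie term is forced to cancel $\Avg\phi(\varphi,0)$ exactly. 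Only then does the second-order Lie remainder give $\|\Avg(\phi\circ\Psi)(\varphi,0)\|\precdot \|F\|^2\le\epsilon^{3/2}$. With your choices this cancellation does not occur: since $\|F\|,\|v\|\sim\epsilon^{1/2}$, the term $\Avg\{\phi,F+v\cdot q\}(\varphi,0)$ is a priori of size $\epsilon^{1/2}$, and neither ``$F$ with vanishing $q$-mean'' nor ``absorbing linear-in-$\phi$ corrections into $\mathbf{N}^+-\mathbf{N}$'' removes it (the latter does not even touch the quantity $\Avg(\phi\circ\Psi)$). So your last paragraph proves only $O(\epsilon^{1/2})$, not $O(\epsilon^{3/2})$.

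A second, smaller point: on the complement of $\{\nu_{\max}(\beta)\le\delta\}$ it is not that ``there is nothing to solve''. The paper solves on $U=\{\nu_{\max}(\beta)<\delta\}$, multiplies by a bump function with $C^2$-norm $\precdot\delta_+^{-(l+2)}$, and dumps the residual on $\T^l\setminus U$ into the $g$-component of $\overline{\mathbf N}$; this is why the bounds carry the factor $\delta_+^{-(l+2)}$ and why the logarithmic powers in (\ref{eq: epsilon_cond}) have the exponent $4(l+2)\tau$.
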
 

\begin{proof}
Let $0 < \lambda < 1$ and $\kappa > 0$. We will show that Proposition \ref{KAM_Step} holds provided that $\lambda$ is sufficiently small and $\kappa$ sufficiently large. As we shall see the smallness conditions on $\lambda$, which will appear naturally along the proof, depend only on $\deppp$. The value of $\kappa$ will depend only on the constant $\kappa_0$ given by Lemma \ref{lem: cohomological_equation}. 

By Lemma \ref{trig_approx}, we can suppose WLOG that the functions $\phi, f$ are trigonometric polynomials in the $q$ variable of degree at most
\[K = \frac{4|\log \epsilon|}{\sigma}.\]
This approximation will allow us to solve certain small divisors problems which will appear in the following. As stated in the proposition, the symplectic transformation $\Psi$ will be the time one map of the Hamiltonian flow associated to a generating function $F$. This function is obtained as the solution of an appropriate \textit{cohomological equation} (\ref{eq: cohomological_equation}) given by the following lemma, whose proof we postpone to the end of this section.
\begin{lem}
\label{lem: cohomological_equation}
There exist positive constants $\kappa_0(d, l, \tau),$ $C(\deppp)$ and functions $\alpha \in C^\infty(\T^l, \R^l),$ $v \in C^\infty(\T^l, \R^d),$ $F \in C^{\infty, \omega}(\T^l \times D_{r - 8\sigma,s})$, $\overline{\mathbf{N}} \in \mathcal{N}^{0, 2\delta_+}_{r - 8\sigma, s}$ obeying
\begin{gather*}
Q(\overline{\mathbf{N}}) = 0, \hskip1cm  \Avg(\phi + \{ \phi, F + v \cdot q\}(\varphi, \cdot , 0)) = 0,\\
\max\Big\{ \|\alpha\|_{C^2}, \| v\|_{C^2}, \| \overline{\mathbf{N}}\|_{\mathcal{N}_{r - 8\sigma, s}} , \| F\|_{C^{2, 3}(\T^l \times D_{r - 8\sigma, s})} \Big\} < \frac{C\epsilon}{\delta_+^{l + 2}\sigma^{\kappa_1}},
\end{gather*}
such that
\begin{equation}
\label{eq: cohomological_equation}
f - \langle \alpha, \phi \rangle + \{ N - g(\mathbf{N}) ,F + v \cdot q\} = \overline{N},
\end{equation}
where $\{\cdot, \cdot\}$ denotes the Poisson bracket in $\T^d \times \R^l \times \R^d \times \R^l.$
\end{lem}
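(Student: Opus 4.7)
The strategy is to solve the cohomological equation by Taylor-expanding the unknown $F$ in $(x,p,y)$ up to order two, reducing it to a finite hierarchy of transport equations on $\T^l \times \T^d$. First, by Lemma~\ref{trig_approx} one may replace $f$ and $\phi$ by their Fourier truncations in $q$ of degree at most $K = 4|\log\epsilon|/\sigma$, which localizes the small-divisor problem via $|\langle \omega, k \rangle| \geq \gamma / K^{d+\tau}$ for $0 < |k| \leq K$. Since $N - g(\mathbf{N})$ is $q$-independent up to the $O^3(x,p,y)$-remainder $h(\mathbf{N})$, at each Taylor order the Poisson bracket $\{N - g, F + v \cdot q\}$ reduces to the transport operator $\partial_\omega = \langle \omega, \partial_q\cdot \rangle$ acting on the corresponding coefficient of $F$, plus algebraic couplings through the $\varphi$-dependent matrices $M, \Gamma, \beta, Q$ coming from the quadratic part of $N - g$, plus an inhomogeneous contribution from $v\cdot q$ equal to $\partial_p(N-g)\cdot v$.

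Writing
\[
F = F^{(0)}(\varphi, q) + F^{(1)}(\varphi, q) \cdot (x, p, y) + \tfrac12 \langle F^{(2)}(\varphi, q)(x, p, y), (x, p, y) \rangle + O^3,
\]
and expanding $f$ and $\overline{\mathbf{N}}$ analogously, I match both sides of (\ref{eq: cohomological_equation}) order by order. The order-zero equation is a transport equation $\partial_\omega F^{(0)} = f^{(0)} - c(\overline{\mathbf{N}}) + \text{(lower-order coupling)}$, with $c(\overline{\mathbf{N}})$ fixed by the $q$-average solvability condition. At order one the $x$-, $p$- and $y$-sectors couple through $\Gamma, \beta, Q = I_l$; the two counter-terms $\alpha$ (entering via $-\langle \alpha, \phi \rangle \approx -\langle \alpha, x \rangle$) and $v$ (entering via $\partial_p(N - g) \cdot v$) are chosen to kill the resonant $q$-averages that would otherwise obstruct solvability. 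At order two, the solvability conditions determine $\beta(\overline{\mathbf{N}}), \Gamma(\overline{\mathbf{N}}), M(\overline{\mathbf{N}})$; the constraint $Q(\overline{\mathbf{N}}) = 0$ is automatically consistent because $Q(\mathbf{N}) = I_l$ is $\varphi$-independent, so the bracket contributes no $y^2$-term at this order. Residues of order $\geq 3$ in $(x,p,y)$ are absorbed into $h(\overline{\mathbf{N}})$, and each transport equation is inverted Fourier-mode-by-Fourier-mode in $q$ with small-divisor loss $K^{d+\tau} \leq C\sigma^{-\kappa_0}$.

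The auxiliary condition $\Avg(\phi + \{\phi, F + v \cdot q\}(\varphi, \cdot, 0)) = 0$ is imposed simultaneously. At $(x,p,y) = 0$ and to leading order in $\|\phi - x\| < 2\lambda$, it becomes an equation coupling $\Avg \phi(\varphi, \cdot, 0)$ (of size $< \epsilon$) to the $y$-linear coefficient of $F$ at the origin; this pins down the remaining degree of freedom in $v$. Outside the set $\{\varphi : \nu_{\max}(\beta(\mathbf{N})) \leq \delta\}$, the data $g(\mathbf{N})$ of the original normal form is unconstrained, and any mismatch between the two sides of (\ref{eq: cohomological_equation}) can be absorbed into $g(\overline{\mathbf{N}})$; this is exactly what lets $\overline{\mathbf{N}}$ lie in $(0, 2\delta_+)$-normal form although the equation is solved globally over $\T^l$.

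The main obstacle is the order-one sector, where the coupling between $\alpha$ and $\beta(\mathbf{N})$ must be inverted uniformly in $\varphi$ on $\{\nu_{\max}(\beta(\mathbf{N})) \leq \delta\}$: this is the origin of the factor $\delta_+^{-(l+2)}$ in the final estimate, coming from differentiating the inverse of a $\varphi$-dependent $l \times l$ block whose smallest singular value is controlled by $\delta_+$ and then taking up to two $\varphi$-derivatives to match the $C^2$ target norm. Combining with the $\sigma^{-\kappa_0}$ small-divisor loss from inverting $\partial_\omega$ and the linear dependence on $\|f\|_{2, r, s} < \epsilon$ yields the claimed bound $C\epsilon / (\delta_+^{l+2}\sigma^{\kappa_0})$ on $\|\alpha\|_{C^2}$, $\|v\|_{C^2}$, $\|F\|_{C^{2,3}}$ and $\|\overline{\mathbf{N}}\|_{\mathcal{N}_{r - 8\sigma, s}}$.
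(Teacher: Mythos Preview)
Your overall strategy matches the paper's: Taylor-expand $F$ to second order in $(x,p,y)$, match both sides of (\ref{eq: cohomological_equation}) order by order, solve the resulting hierarchy of transport and coupled equations on $\T^d$ (the paper does this via Lemmas~\ref{classic_cohom}, \ref{coupled_equation_1}, \ref{coupled_equation_2}), and determine $\alpha$, $v$ together with the $q$-average of the $y$-linear coefficient of $F$ from a single linear system encoding the first-order solvability conditions and the auxiliary constraint $\Avg(\phi + \{\phi, F + v\cdot q\})(\varphi,0) = 0$. The absorption of the order-$\geq 3$ remainder into $h(\overline{\mathbf N})$ and of the ``bad-$\varphi$'' residue into $g(\overline{\mathbf N})$ is also correct in spirit.

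Where your account goes wrong is the origin of the factor $\delta_+^{-(l+2)}$. The linear system that fixes $(\alpha, v, \Avg B_y)$ is \emph{not} nearly singular: in the paper it is $2\lambda$-close to the block-diagonal matrix $\mathrm{diag}(I_l, -M, I_l)$, hence uniformly invertible with no $\delta_+$-dependence. Likewise, the coupled small-divisor systems involving $\beta$ (Lemmas~\ref{coupled_equation_1}, \ref{coupled_equation_2}) are solved under $\nu_{\max}(\beta) \leq \tfrac14 \min_{|k|\leq K}\langle\omega,k\rangle^2$ with bounds depending only on $\gamma,\tau,\sigma$. The $\delta_+^{-(l+2)}$ comes from a different mechanism: $F,\alpha,v$ are first constructed only on the open set $U = \{\varphi : \nu_{\max}(\beta(\varphi)) < \delta\}$ and then multiplied by a smooth bump function $\psi$, equal to $1$ on $\{\nu_{\max}(\beta) < 2\delta_+\}$ and supported in $\{\nu_{\max}(\beta) < 3\delta_+\}$, with $\|\psi\|_{C^2} \precdot \delta_+^{-(l+2)}$ (Lemma~\ref{bump_function}). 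The global residual $\overline g$ is then \emph{defined} as the difference of the two sides of (\ref{eq: cohomological_equation}) minus the explicit quadratic normal-form piece; it vanishes (with its $\varphi$-derivative) wherever $\psi\equiv 1$, which is what puts $\overline{\mathbf N}$ in $(0,2\delta_+)$-normal form. Your phrase ``any mismatch can be absorbed into $g(\overline{\mathbf N})$'' is the right intuition, but without the bump-function cutoff you have neither explained how $F,\alpha,v$ extend to globally $C^\infty$ objects on $\T^l$ nor correctly accounted for the $\delta_+^{-(l+2)}$ in the estimate.
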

Let $F, v, \alpha, \overline{\mathbf{N}}, \kappa_0$ as in Lemma \ref{lem: cohomological_equation} and define
\[ \kappa(d, l, \tau) = 4\kappa_0.\] 
Let $\Psi = \Psi^1_{F + v \cdot q}$. By (\ref{eq: epsilon_cond}), the bounds of Lemma \ref{lem: cohomological_equation} and Lemma \ref{composition_bounds}, the transformation $\Psi: \T^l \times D_{r^+, s^+} \rightarrow \dom$ is well defined and satisfies
\[ \| \Psi - \id \|_{2, r^+, s^+} \leq \epsilon^{\frac{1}{2}},\]
for $\lambda$ sufficiently small. By (\ref{eq: cohomological_equation}) and (\ref{eq: composition_formula})
 \begin{align*}
(N + f & - \langle \alpha, \phi \rangle ) \circ \Psi = N + \overline{N} + g(\mathbf{N}) \circ \Psi - g(\mathbf{N}) \\
& \quad + \int_0^1 \{ (1- t)\overline{\mathbf{N}} + t(f - \langle \alpha, \phi \rangle ), F + v \cdot q\} \circ \Psi^t_{F + v \cdot q} dt. 
\end{align*}
Denote
\[ \mathbf{N}^+ = \mathbf{N} + \overline{\mathbf{N}} + (g(\mathbf{N}) \circ \Psi - g(\mathbf{N}))e_g,\]
\[f^+ = \int_0^1 \{ (1- t)\overline{\mathbf{N}} + t(f - \langle \alpha, \phi \rangle ), F + v \cdot q\} \circ \Psi^t_{F + v \cdot q} dt ,\]
where $e_g$ denotes the vector in $\normalrsp$ with 1 in the $g$-coordinate and zero everywhere else. Notice that $\mathbf{N}^+ \in \normalrsp$ and $f^+ \in C^{\infty, \omega}(\T^l \times D_{r^+, s^+})$. Moreover, 
\[ w(\mathbf{N}^+) = \omega, \hskip1cm Q(\mathbf{N}^+) = 0, \]
\[ (N + f - \langle \alpha, \phi \rangle)\circ \Psi = N^+ + f^+ .\]
Let us check that the bounds in the statement hold. It follows from Lemma \ref{lem: cohomological_equation}, (\ref{eq: epsilon_cond}) and Cauchy's estimates that 
\[
 \| \alpha\|_{C^{2}(\T^l)} \leq \epsilon^{\frac{1}{2}}, \hone \| f^+\|_{\rspnorm} \precdot \dfrac{\epsilon^2}{\delta_+^{2(l + 2)}\sigma^{2\kappa_1 + 2}} \leq \epsilon^{\frac{3}{2}},\]
for $\lambda$ sufficiently small. Since
\begin{align*}
 \| g(\mathbf{N}) \circ \Psi - g(\mathbf{N}) \|_{\rspnorm} & \precdot \left\| \int_0^1 \{ g, F + v\cdot q\} \circ \Psi^t_{F + v \cdot q} dt \right\|_{\rspnorm} \\
 & \precdot \dfrac{\epsilon}{\delta_+ \sigma^{\kappa_1 + 2} },
 \end{align*} 
 it follows that 
 \[
 \| \mathbf{N}^+ - \mathbf{N}\|_{\mathcal{N}_{r^+, s^+}} \leq \epsilon^{\frac{1}{2}}.
 \] 
In particular $\nu_{\max}(\beta(\mathbf{N}^+)(\varphi)) < \delta_+$ implies 
\[\nu_{\max}(\beta(\mathbf{N})(\varphi)) < 2\delta_+ < \delta.\]
Thus if $\nu_{\max}(\beta(\mathbf{N}^+)(\varphi)) < \delta_+$ then $g(\mathbf{N}^+)$ vanishes together with its derivatives of all orders. Hence $\mathbf{N}^+ \in \normalrsp^{\omega, \delta_+}.$ It remains to check that 
 \[ \|\Avg(\phi \circ \Psi)(\varphi, 0)\|_{C^2(\T^l)} \leq \epsilon^{\frac{3}{2}}.\] 
 By (\ref{eq: composition_formula}) and Lemma \ref{lem: cohomological_equation}
  \begin{align*}
\Avg(\phi \circ \Psi)(\varphi, 0) & = \Avg\Bigg( \phi + \{ \phi , F + v \cdot q\} + \int_0^1  \{ \{ \phi , F + v \cdot q\}, F + v\cdot q \} \circ \Psi^t_F dt  \Bigg)(\varphi, 0) \\
& =  \Avg\Bigg( \int_0^1  \{ \{ \phi , F + v \cdot q\}, F + v\cdot q \} \circ \Psi^t_F dt  \Bigg)(\varphi, 0).
 \end{align*}
Hence, by Lemma \ref{lem: cohomological_equation}
 \begin{align*}
 \|\Avg(\phi \circ \Psi)(\varphi, 0)\|_{C^2(\T^l)}  & \precdot \frac{\epsilon^2}{\delta_+^{2(l + 2)} \sigma^{2\kappa_1}} \\
 & \leq \epsilon^{\frac{3}{2}},
 \end{align*}
 for $\lambda$ sufficiently small. This completes the proof. 
 \end{proof}

We now prove Proposition \ref{prop: iteration} by iterating Proposition \ref{KAM_Step}. 

\begin{proof}[Proof of Proposition \ref{prop: iteration}] Let $\kappa, \lambda$ as in Proposition \ref{KAM_Step}. Define
\[ \sigma_0 = \frac{1}{10} \min \{ r, s\}, \hone \epsilon_0 = (\lambda \sigma_0)^{3\kappa}. \]
Taking $\lambda$ smaller if necessary we can suppose WLOG that the function $$x \mapsto \epsilon_0^{1/2} |\log (\epsilon_0)|^{4(l + 2)\tau}$$ is increasing in the interval $(0, \epsilon_0]$. Similarly, we can assume that
\[ 
\epsilon_0^{1/2} |\log (\epsilon_0)|^{4(l + 2)\tau} < \epsilon_0^{1/3} = \lambda \sigma_0^{3 \kappa}
\]
holds. Let $r_0 = r,$ $s_0 = s,$ and define for all $n \in \N$
 \[ \sigma_n = \frac{1}{10^{n + 1}}\min\{ r, s\}, \hone \delta_n = \left( \frac{\sigma_n}{|\log \epsilon_n|}\right)^{4\tau},\]
 \[ r_{n + 1} = r_{n} - 10\sigma_n, \hone s_{n + 1} = s_{n} - \sigma_n.
\]
We will construct the sequences $\mathbf{N}_n, \alpha_n, f_n, \Phi^n$ recursively as follows. Assume that for some $n \in \N$ the functions $\mathbf{N}_n, \alpha_n, f_n, \Phi^n$ have been defined and that Proposition \ref{KAM_Step} can be applied to
\begin{equation}
\label{eq: n_hypotheses}
\begin{aligned}
\epsilon = \epsilon_n, \hone \delta = \delta_n, \hone \sigma = \sigma_n, \\
 \phi = \Phi^n_x, \hone \mathbf{N} = \mathbf{N}_n, \hone f = f_n. 
 \end{aligned}
\end{equation}
Notice that for $n = 0$, that is, for $N_0,$ $\alpha_0,$ $f_0,$ and $\Phi^0$ as in the statement of Proposition \ref{prop: iteration}, the hypotheses of Proposition \ref{KAM_Step} are trivially verified. 
Denote by $\mathbf{N}^{(n + 1)},$ $\alpha^{(n + 1)},$ $f^{(n + 1)},$ $\Phi^{(n + 1)}$ the functions given by the Proposition \ref{KAM_Step} when applied to (\ref{eq: n_hypotheses}) and by $F^{(n + 1)}$ the generating function of $\Phi^{(n + 1)}$. We define
\[ \mathbf{N}_{n + 1} = \mathbf{N}^{(n + 1)}, \hone \alpha_{n+1} = \alpha_n + \alpha^{(n+1)},\]
 \[ f_{n + 1} = f^{(n + 1)}, \hone \Phi^{n+1} = \Phi^n \circ \Phi^{(n+1)}.\]
Thus, if we can iterate this process $n$ times, by Proposition \ref{KAM_Step} we have $\mathbf{N}_{n} \in \mathcal{N}_{r_{n}, s_{n}}^{\omega, \delta_{n}}$, $Q(\mathbf{N}_{n} ) = I_l$ and  
\begin{equation}
\label{n_step}
\begin{aligned}
(N_0 + f_0 - \langle \alpha_{n}, x \rangle) \circ \Phi^{n} = N_n + f_n.
 \end{aligned}
 \end{equation}
Therefore, to prove the proposition, it suffices to show that this recursive process can be iterated indefinitely and that the bounds in the statement are verified.  

\textit{Recursion.} 
We will show that if $\epsilon_{n - 1}, \delta_{n - 1}, \sigma_{n - 1}, \Phi^{n - 1}_x, \mathbf{N}_{n - 1}, f_{n - 1}$ satisfy the hypotheses of Proposition $\ref{KAM_Step}$ the same is true for $\epsilon_{n}$, $\delta_{n}$, $\sigma_{n},$ $\Phi^{n}_x,$ $\mathbf{N}_{n},$ $f_{n}.$ By construction 
\[\| f_n\|_{\rsnnorm}, \| \Avg \Phi^{n}_x(\varphi, 0) \|_{C^2} < \epsilon_{n}.\] By the bounds in Proposition \ref{KAM_Step}
\begin{align*}
 \| \Phi^{n}_x - x \|_{\rsnnorm}, \| \mathbf{N}_{n} - \mathbf{N}_0 \|_{\normalrsn} & < \sum_{k = 0}^{n - 1} \epsilon_k^{\frac{1}{2}} \\
 & < 2\epsilon_0^{\frac{1}{2}} \\
 & < 2\lambda. 
 \end{align*} 
From (\ref{eq: epsilon_cond})
\begin{align*}
\epsilon_{n}^{1/2}|\log \epsilon_{n}|^{4(l + 2)\tau} & < 2^{4(l + 2)\tau} \epsilon_{n - 1}^{3/2}|\log \epsilon_{n}|^{4(l + 2)\tau} \\
& \leq 2^{4\tau} \epsilon_{n - 1} \lambda \sigma_{n - 1}^{\kappa} \\
& \leq (2^{4\tau} 10^\kappa \lambda) \lambda \sigma_{n}^\kappa \\
& \leq \lambda \sigma_{n}^{\kappa},
\end{align*}
for $\lambda$ sufficiently small. Finally, equation (\ref{eq: delta_cond}) is equivalent to $\delta_{n} \leq 8\delta_{n - 1}$, which follows trivially from the definitions of $\delta_n, \epsilon_n$ for $\lambda$ sufficiently small. Thus $\epsilon_{n}$, $\delta_{n}$, $\sigma_{n},$ $\Phi^{n}_x,$ $\mathbf{N}_{n},$ $f_{n}$ satisfy the hypotheses of Proposition \ref{KAM_Step}. 

Therefore the functions $\mathbf{N}_n, \alpha_n, f_n, \Phi^n$ are well defined for all $n \in \N$. It remains to check that the bounds in Proposition \ref{prop: iteration} are satisfied. 

\textit{Bounds.} Fix $n \in \N$. By construction and Proposition \ref{KAM_Step}
\[ \| \mathbf{N}_{n + 1} - \mathbf{N}_{n}\|_{\normalrsnp} , \| \alpha_{n + 1} - \alpha_{n}\|_{C^2(\T^l)} < \epsilon^{\frac{1}{2}}_n, \]
\[ \| f_n\|_{\rsnnorm}, \| \Avg(\Phi^{n}_x(\cdot, 0)) \|_{C^2(\T^l)} < \epsilon_{n}, \]
for all $n \geq 1$. By (\ref{eq: composition_formula}) we have
\begin{align*}
 \| \Phi^{n + 1} - \Phi^{n}\|_{\rsnpnorm} &= \| \Phi^{n} \circ \Phi^{(n + 1)} - \Phi^{n}\|_{\rsnpnorm} \\
 & = \left\| \int_0^1 \left\{ \Phi^{n}, F^{(n + 1)} \right\} \circ \Psi_{ F^{(n + 1)}}^t dt \right\|_{\rsnpnorm} \\
 & \precdot \| \Phi^{n}\|_{\rsnnorm} \left\| F^{(n + 1)} \right\|_{C^{2, 3}(\T^l \times D_{r_n, s_n})} \\
 & \precdot \| \Phi^{n}\|_{\rsnnorm}\epsilon_n^{\frac{1}{2}}.
\end{align*} 
By construction, for all $k \in \N$
 \[\| \Phi^{(k)} - \id \|_{\rsnnorm} \leq \epsilon_k^{1/2}. \]
Thus
\[
\| \Phi^{n}\|_{\rsnnorm} = \| \Phi^{(n)} \circ \Phi^{(n - 1)} \circ \dots \circ \Phi^{(1)} \|_{\rsnnorm} \leq \prod_{k = 0}^{n - 1} \left(1 + \epsilon_k^{\frac{1}{2}} \right),
\]
which is uniformly bounded since the sequence $\epsilon_k$ is rapidly decreasing. Therefore 
\[ \| \Phi^{n + 1} - \Phi^{n}\|_{\rsnpnorm} \precdot \epsilon_n^{\frac{1}{2}}. \]
\end{proof} 

It remains to prove Lemma \ref{lem: cohomological_equation}.

\begin{proof}[Proof of Lemma \ref{lem: cohomological_equation}] Let us express the linear part (on the variables $x, p , y$) of $f, \phi$ as 
\begin{equation*}
f = a^f(\varphi, q) + b^f(\varphi, q)^T \cdot \left( \begin{array}{ccc} x \\ p \\ y \\ \end{array} \right) + O^2(x,p,y),
\end{equation*}
\[
\phi = a^\phi(\varphi, q) + b^\phi(\varphi, q)^T \cdot \left( \begin{array}{ccc} x \\ p \\ y \\ \end{array} \right) + O^2(x,p,y),
\]
Recall that by assumption the functions $f,$ and $\phi$ are trigonometric polynomials on the variable $q$ of degree at most
\[ K = \frac{4 |\log \epsilon|}{\delta}\]
in each coordinate. For $\alpha \in C^\infty(\T^l, \R^l)$ fixed, we write the quadratic part of $f -\langle \alpha, \phi \rangle$ as 
\begin{align*}
f - \langle \alpha, \phi \rangle & = a(\varphi, q) + b(\varphi, q)^T \cdot \left( \begin{array}{ccc} x \\ p \\ y \\ \end{array} \right) + \dfrac{1}{2}\left\langle d(\varphi, q) \left(\begin{array}{ccc} x \\ p \\ y \\ \end{array} \right) , \left( \begin{array}{ccc} x \\ p \\ y \\ \end{array} \right) \right\rangle \\
& \quad + O^3(x,p,y).
\end{align*}
Clearly 
\[ a = a^f - \langle \alpha, a^\phi \rangle, \hone b = b^f - \langle \alpha, b^\phi \rangle. \]
Consider $F$ of the form
\begin{equation}
\label{Generating}
F = A(\varphi, q) + B^T(\varphi, q) \cdot \left( \begin{array}{ccc} x \\ p \\ y \\ \end{array} \right) + \dfrac{1}{2}\left\langle D(\varphi, q) \left(\begin{array}{ccc} x \\ p \\ y \\ \end{array} \right) , \left( \begin{array}{ccc} x \\ p \\ y \\ \end{array} \right) \right\rangle ,
\end{equation}
for some $C^{\infty, \omega}$ functions $A, B, D$. These functions will be trigonometric polynomials on the variable $q$ of degree at most $K$ in each coordinate. Denote
\[ B = (B_x, B_p, B_y), \hone D = \begin{pmatrix}
D_{xx}& D_{xp} & D_{xy} \\
D_{px}& D_{pp} & D_{py} \\
D_{yx}& D_{yp} & D_{yy} \\
\end{pmatrix}.\]
To simplify the notation in the following we write
\[\mathbf{N} = (\omega, c, \beta, \Gamma, M, I_l, g, h).\]
Recall that the associated Hamiltonian is given by 
\begin{align*}
N(\varphi, q, x, p, y) &= c(\varphi) + \langle \omega, p \rangle + \dfrac{1}{2}\langle M(\varphi)p , p \rangle  + \dfrac{1}{2}|y|^2+ \langle \Gamma(\varphi) p, x \rangle + \dfrac{1}{2}\langle \beta(\varphi) x , x \rangle  \\
& + g(\varphi, q, x, p, y) + h(\varphi, q, x, p, y). 
\end{align*}
Let $P$ be the truncation of $\{ h, F + v \cdot q\}$ to terms of order at most $2$ in the variables $x, p, y$. Notice that $P$ only depends on $v, A, B_x, B_y$. Indeed
\begin{align*}
\{ h, F + v \cdot q\} & = \{ h, v\cdot q + A + B_x^T \cdot x + B_y^T \cdot y \} + O^3(x, p, y).
\end{align*}
We have
\begin{equation}
\label{main_eq}
\begin{aligned}
& f - \langle \alpha, \phi \rangle + \{ N - g, F + v\cdot q\} = - \langle \omega, v \rangle \\
& + a - \pw A \\
 & + x^T \left[ b_x - \Gamma  (v + \partial_q A ) + \beta  B_y - \pw B_x \right] \\
 & + y^T  \left[ b_y - B_x - \pw B_y\right]\\
 & + p^T  \left[ b_p - M  (v + \partial_q A) + \Gamma^T  B_y\ -\pw B_p \right] \\
 & + x^T  \left[ d_{xx} + P_{xx} - \Gamma  \partial_q B_x + \beta  D_{xy} - \pw D_{xx} \right]  x\\
 & + y^T  \left[ d_{yy} + P_{yy} -  D_{xy} - \pw D_{yy} \right]  y \\
 & + x^T  \left[ d_{xy} + P_{xy} - \Gamma  \partial_q B_y -   D_{xx} + \beta  D_{yy} - \pw D_{xy}\right]  y \\
 & + p^T  \left[ d_{px} + P_{px} - M  \partial_q B_x - \Gamma  \partial_q B_p + D_{py}  \beta - \pw D_{px} \right]  x \\
 & + p^T  \left[ d_{py} + P_{py} - M  \partial_q B_y - \Gamma^T  D_{yy} - D_{px}   - \pw D_{py} \right]  y \\ 
 & + p^T  \left[ d_{pp} + P_{pp} - M  \partial_q B_p - \Gamma^T  D_{py} - \pw D_{pp}\right]  p \\
 & + \overline{h},
\end{aligned}
\end{equation}
where $\overline{h} = O^3(x, p, y)$. We will define, in an orderly fashion, $A$, $(B_x, B_y - \Avg(B_y))$, $B_p, (\alpha, v, \Avg(B_y))$, $(D_{xx}, D_{yy}, D_{xy})$, $(D_{px}, D_{py}), D_{pp},$ so that for each line in equation (\ref{main_eq}) the expression inside the square parentheses is equal to its average over $\td$. The parentheses in the list indicate that the functions are chosen simultaneously. 

Furthermore, we chose $\alpha, v,$ as well as some of the averages of the functions in the defition of $F$, so that the coefficients of the monomials $x, y, p, xy, py, yy$ in the RHS of equation (\ref{main_eq}) are equal to zero. 
 Let
\[U = \{ \varphi \in \T^l \,\mid\, \nu_{\max}(\beta(\varphi)) < \delta \}.\]
We start by defining the functions in the open set $U$. Later we will extend these functions to $\T^l$ with the help of a smooth bump function.  Most of the definitions of these functions will be obtained as solutions of certain small divisors problems which are considered in Section \ref{sc: lemmata}.  For this, it is essential to notice that for $\varphi \in U$
\[ \nu_{\max}(\beta(\varphi) ) \leq \frac{1}{4}\min_{|k| \leq K} \langle \omega, k\rangle^2,\]
which guarantees that the hypotheses of Lemmas \ref{coupled_equation_1}, \ref{coupled_equation_2} are verified for the matrix $\beta(\varphi)$. 

In the following we fix $\varphi \in \T^l$ and suppose all the functions are evaluated at $\varphi$. To simplify the notation we will omit the evaluation point. Also, to avoid the double subscript, we denote $\| \cdot \|_{\T^r}$ simply by $\| \cdot \|_{r}.$

 Assume $\alpha, v, \Avg(B_y)$ have been defined and let us define $A, B_x, B_y - \Avg(B_y), B_p$. As we shall see the definitions of $\alpha, v, \Avg(B_y)$ will only depend on $f$ and $\phi$ but we prefer, for clarity of exposition, to postpone their exact definition. 

\begin{itemize}
 \item \underline{\textit{Definition of} $A$}. By Lemma \ref{classic_cohom} 
 \[A  = \firstop(a)\]
  belongs to $ C^\omega(\td_{r - \sigma})$ and satisfies
\[ \left\{ \begin{array}{l}
 \| A\|_{r - \sigma} \precdot \frac{\max\{\epsilon, |\alpha| \} }{\sigma^{\tau + d}}, \\
 a - \partial_\omega A = \Avg(a).
 \end{array} \right. \]
 Notice that 
 \[ A = \firstop(a^f) + \firstop(a^\phi)  \cdot \alpha,\]
 where we denote by $\firstop(a^\phi)$ the operator $\firstop$ applied to each coordinate of $a^\phi$. Thus we can write 
 \[A = A^f + A^\phi \cdot \alpha,\]
 where $A^f$ and $A^\phi$ depend only on $f$ and $\phi$ respectively. 
\item \underline{\textit{Definition of} $B_x, B_y - \Avg(B_y)$}. By Cauchy's estimates 
\[ \| \partial_q A\|_{r - 2\sigma} \precdot \frac{\max\{\epsilon, |\alpha| \}}{\sigma^{\tau+d + 1}}.\] 
 By Lemma \ref{coupled_equation_1}
\[ (B_x, B_y - \Avg(B_y)) = \secondop(b_x - \Gamma  \partial_q A, b_y) \]
belongs to $C^\omega(\td_{r - 3\sigma})^{2l}$ and satisfy
\[ \left\{ \begin{array}{l}
 \| B_x\|_{r - 3\sigma}, \| B_y - \Avg(B_y)\|_{r - 3\sigma} \precdot \frac{\max\{\epsilon, |\alpha| \}}{\sigma^{(2l + 1)\tau + 2d + 1}},\\
 b_x - \Gamma  (v + \partial_q A ) + \beta  B_y - \pw B_x = \Avg(b_x) - \Gamma  v + \beta \Avg(B_y),\\
 b_y - B_x - \pw B_y = 0.
 \end{array} \right.
 \] 
 As before, noticing that
 \[
(B_x, B_y - \Avg(B_y))  = \secondop(b^f_x - \Gamma  \partial_q \firstop(a^f), b^f_y) + \secondop(b_x - \Gamma  \partial_q \firstop(a^\phi), b^\phi_y)  \alpha, \]
 we can write $$(B_x, B_y - \Avg(B_y)) = (B_x^f, B_y^f) + (B_x^\phi, B_y^\phi) \alpha,$$  where $ (B_x^f, B_y^f)$ and $(B_x^\phi, B_y^\phi)$ depend only on $f$ and $\phi$ respectively. 
 \item \underline{\textit{Definition of} $B_p$}. By Lemma \ref{classic_cohom} 
\[ B_p  = \firstop(b_p - M  \partial_q A + \Gamma^T  (B_y - \Avg(B_y) ) \]
belongs to $ C^\omega(\td_{r - 4\sigma})^{d}$ and satisfies
\[ \left\{ \begin{array}{l}
 \| B_p\|_{r - 4\sigma} \precdot \frac{\max\{\epsilon, |\alpha| \}}{\sigma^{(2l + 2)\tau + 3d + 1}}, \\
 b_p - M  (v + \partial_q A) + \Gamma^T  B_y\ -\pw B_p = \Avg(b_p) - M  v + \Gamma^T  \Avg(B_y).
 \end{array} \right. \]
 As for the previous functions, noticing that
\[ B_p  = \firstop(b_p^f - M  \partial_q \firstop(a^f) + \Gamma^T  B^f_y)  + \firstop(b_p^\phi - M  \partial_q \firstop(a^\phi) + \Gamma^T  B^\phi_y)  \alpha,\]
we can write 
\[B_p = B_p^f + B_p^\phi \alpha,\]
  where $B_p^f$ and $B_p^\phi$ depend only on $f$ and $\phi$ respectively. 

 Once we established the dependence of $A, B_x, B_y, B_p$ as functions of $\alpha, v,$ $\Avg(B_y)$ we can find an expression for the latter ones as follows.
 \item \underline{\textit{Definition of} $\alpha, v, \Avg(B_y)$}. The third and fifth lines of equation (\ref{main_eq}) are zero if and only if 
 \begin{equation}
 \label{alpha_v_conditions}
 \left\{ \begin{array}{l} 
 \Avg(b_x^\phi)  \alpha - \Gamma  v + \beta  \Avg(B_y) = - \Avg(b_x^f), \\
 \Avg(b_p^\phi)  \alpha - M  v + \Gamma ^T  \Avg(B_y) = - \Avg(b_p^f).
 \end{array}\right.
 \end{equation}
 Let us denote 
 \begin{gather*}
 G^f = A^f + B^f_x \cdot x + B^f_p \cdot p + B^f_y \cdot y,\\
 G^\phi = A^\phi + B^\phi_x \cdot x + B^\phi_p \cdot p + B^\phi_y \cdot y.
 \end{gather*}
Notice that $G^f$ depends only on $f$ and satisfies
\[\| G^f \|_{r - 4\sigma} \precdot \frac{\epsilon}{\sigma^{(2l + 2)\tau + 3d + 1}}.\]
By definition of $F$ and evaluating at $p = 0,$ $x = 0 = y$ we have
 \begin{align*}
 \Avg(\phi + \{ \phi, F + v \cdot q\}) & = \Avg(\phi) + \Avg(B_y) - \Avg(b_p^\phi) \cdot v \\
 & \quad + \Avg( \nabla (\phi - x) \cdot (X_{G^f} + X_{G^\phi} \cdot \alpha) .
 \end{align*}
The RHS of last equation is equal to zero if 
\begin{equation}
\label{B_y_condition}
\begin{aligned}
 - \Avg(\phi + \nabla (\phi - x) \cdot X_{G^f}) = \Avg( \nabla (\phi - x) X_{G^\phi} ) \cdot \alpha \\
 - \Avg(b_p^\phi) \cdot v + \Avg(B_y). 
 \end{aligned}
 \end{equation}
Equations (\ref{alpha_v_conditions}), (\ref{B_y_condition}) give raise to the following linear system
\[
\begin{array}{rc}
\begin{pmatrix}
\Avg(b_x^\phi) & - \Gamma & \beta \\
 \Avg(b_p^\phi) & - M & \Gamma ^T \\
 \Avg( \nabla (\phi - x) \cdot X_{G^\phi} ) & - \Avg(b_p^\phi) & I_l 
\end{pmatrix} 
\begin{pmatrix}
\alpha \\
v \\
\Avg(B_y) 
\end{pmatrix}  
& \\

= \begin{pmatrix}
\Avg(b_x^f) \\
 \Avg(b_p^f) \\
 \Avg(\phi + \nabla (\phi - x) \cdot X_{G^f}) 
\end{pmatrix} & \\
\end{array}
\]
This system possesses a unique solution since by hypothesis the matrix on the LHS is $2\lambda$ close to the matrix
\[ \begin{pmatrix}
I _l & 0 & 0 \\
0 & - M & 0 \\
0 & 0 & I_l 
\end{pmatrix},\]
and is thus invertible for $\lambda$ sufficiently small. Define $\alpha, v, \Avg(B_y)$ to be the unique solution to the system. Notice that this does not depend on the definitions of $A,$ $B_x,$ $B_p,$ $B_y - \Avg(B_y)$. With these definitions
\[ |\alpha|, |v|, |\Avg(B_y)| \precdot \frac{\epsilon}{\sigma^{(2l + 2)\tau + 3d + 2}}.\] 
\item \underline{\textit{Bound for} $P$.} As $P$ depends only on $v, A, B_x, B_y$, by Cauchy's estimates
\begin{align*}
 \| P\|_{r - 4\sigma} & \precdot \frac{\epsilon}{\sigma^{(2l + 2)\tau + 3d + 2}}.
 \end{align*}
 Since $P$ is quadratic on the variables $x, p, y$ 
\[ \| \nabla^2_{x, p, y} P \|_{r - 4\sigma} \precdot \frac{\epsilon}{\sigma^{(2l + 2)\tau + 3d + 2}}. \]
 \item \underline{\textit{Definition of} $D_{xx}, D_{yy}, D_{xy}$}. Consider now the coupled system given by lines six to eight in the equation. By Cauchy's estimates 
\[ \| \partial_q B_x\|_{r - 4\sigma}, \| \partial_q B_y\|_{r - 4\sigma} \precdot \frac{\epsilon}{\sigma^{(2l + 1)\tau + 2d + 2}}.\] 
 By Lemma \ref{coupled_equation_2} there exist $D_{xx}, D_{yy}, D_{xy} \in M_l(\R)$ obeying 
 \[ \| D_{xx}\|_{r - 5\sigma}, \| D_{xy}\|_{r - 5\sigma}, \| D_{yy}\|_{r - 5\sigma} \precdot \frac{\epsilon}{\sigma^{(5l + 2)\tau + 4d + 2}},\]
 such that the sixth line of equation (\ref{main_eq}) does not depend on $q$ while its seventh and eighth lines are equal to zero. 
 \item \underline{\textit{Definition of} $D_{px}, D_{py}$}. For $1 \leq i \leq l$ fixed, the equations concerning $p_ix,$ $p_iy$ give raise to a coupled system equivalent to that of lines three and four. By Cauchy's estimates
 \[ \| \partial_q B_p\|_{r - 5\sigma} \precdot \frac{\epsilon}{\sigma^{(2l + 2)\tau + 3d + 2}}. \] Thus by Lemma \ref{coupled_equation_1} there exist $D_{px}, D_{px} 
\in M_{d \times l }(\R)$ obeying 
 \[ \| D_{px}\|_{r - 7\sigma}, \| D_{py}\|_{r - 7\sigma}, \precdot \frac{\epsilon}{\sigma^{(7l + 2)\tau + 5d + 2}},\]
 such that the ninth line does not depend on $q$ and the tenth line is equal to zero. 
 \item \underline{\textit{Definition of} $D_{pp}$} By Lemma \ref{classic_cohom} there exist $D_{pp} \in M_{d}(\R)$ obeying 
 \[ \| D_{pp}\|_{r - 8\sigma} \precdot \frac{\epsilon}{\sigma^{(7l + 3)\tau + 5d + 2}},\]
 such that the eleventh line does not depend on $q$. 
\end{itemize}
Summarizing, we just defined $F \in C^{\infty, \omega}(U \times D_{r - 8 \sigma, s}, \C),$ $\alpha \in C^\infty(U, \R^l),$ $v \in C^\infty(U, \R^{d})$
with $F$ of the form 
\begin{equation*}
F = A(\varphi, q) + B^T(\varphi, q) \cdot \left( \begin{array}{ccc} x \\ p \\ y \\ \end{array} \right) + \dfrac{1}{2}\left\langle D(\varphi, q) \left(\begin{array}{ccc} x \\ p \\ y \\ \end{array} \right) , \left( \begin{array}{ccc} x \\ p \\ y \\ \end{array} \right) \right\rangle, 
\end{equation*}
 so that the LHS of equation (\ref{eq: cohomological_equation}) is equal to 
 \[ \overline{c} + \dfrac{1}{2}\langle \overline{M} p , p \rangle + \langle \overline{\Gamma} \cdot p, x \rangle + \frac{1}{2} \langle \overline{\beta} \cdot x, x \rangle + \overline{h},\]
for some smooth functions $\overline{c}, \overline{\beta}, \overline{\Gamma}, \overline{M}$ on $U \subset \T^l$ and a smooth function $\overline{h} = O^3(x, p, y)$ defined on $U \times D_{r - 8\sigma, s}$. The differentiable dependence on $\varphi$ follows from the explicit definition of the functions given by Lemmas \ref{classic_cohom}, \ref{coupled_equation_1}, \ref{coupled_equation_2}. Before giving explicit bounds for the norm of these functions let us extend its domain to $\T^l$ with the aid of a bump function. 

By Lemma \ref{bump_function}, there exists a $C^\infty$ bump function $\psi: \T^l \rightarrow \R$, obeying 
\[ 
\psi(\varphi) = \left\{ \begin{array}{ccc} 
1 & \text{if} & \nu_{\max}(\beta(\varphi)) < 2\delta_+, \\
0 & \text{if} & \nu_{\max}(\beta(\varphi)) > 3\delta_+, \\
\end{array} \right. 
\]
such that
\[ 0 \leq \psi \leq 1, \hone \| \psi\|_{C^2} \precdot \frac{1}{\delta_+^{l + 2}}.\]
Hence the product $\psi F$, a priori defined only over $U$, can be extended by zero to well defined $C^{\infty, \omega}$ function over $\T^l \times D_{r - 8\sigma, s}$ satisfying
\[ \| \psi F \|_{C^{2, 3}(\T^l \times D_{r - 8\sigma, s})} \precdot \frac{\epsilon}{\delta_+^{l + 2} \sigma^{(7l + 3)\tau + 5d + 2}}.\] 
The same extension for the functions $\alpha, v, \overline{c}, \dots,$ give raise to well defined maps over $\T^l$. As an abuse of notation we keep the same letters to denote the extended functions. Let us define $\overline{g} \in C^{\infty, \omega}(\T^l \times D_{r - 8\sigma, s})$ as 
\begin{align*}
 \overline{g} & = f - \langle \alpha, \phi \rangle + \{ N - g(\mathbf{N}) ,F + v \cdot q \} - \left( \overline{c} + \dfrac{1}{2}\langle \overline{M} p , p \rangle + \langle \overline{\Gamma} \cdot p, x \rangle + \frac{1}{2} \langle \overline{\beta} \cdot x, x \rangle + \overline{h} \right). 
\end{align*}
Notice that $\overline{g}$ is equal to zero for all $\varphi$ satisfying $\nu_{\max}(\overline{\beta})(\varphi) < 2\delta_+.$ Let
\[ \overline{\mathbf{N}} = (0, \overline{c}, \overline{\beta}, \overline{\Gamma},  \overline{M}, 0, \overline{g}, \overline{h}).\]
Clearly $\overline{\mathbf{N}} \in \mathcal{N}^{0, 2 \delta_+}_{r, s - 8\sigma}$ and it satisfies equation (\ref{eq: cohomological_equation}). From the bounds in the construction and on $\psi$ we have
\[ \| \alpha\|_{C^2}, \| v\|_{C^2} \precdot \frac{\epsilon}{\delta_+^{l + 2}\sigma^{(2l + 2)\tau + 3d + 2}},\] 
\[ \| \overline{\mathbf{N}}\|_{\mathcal{N}_{r - 8 \sigma, s}} \precdot \frac{\epsilon}{\delta_+^{l + 2} \sigma^{(7l + 3)\tau + 5d + 2}}.\] 
This completes the proof.
\end{proof}

\section{Lemmata}
\label{sc: lemmata}

Let $\mathcal{T}^K(\T^d_r)$ be the space of \textit{trigonometric polynomials of degree at most $K$} over $\T^d_r$ and let $\mathcal{T}_0^K(\T^d_r)$ be the space of functions in $\mathcal{T}^K(\T^d_r)$ with zero mean value. To avoid the double subscript we will denote the norm $\| \cdot \|_{\T_{r}}$ simply by $\| \cdot\|_r$.  Given $f \in C^\omega(\td_r)$ we denote its \textit{Fourier coefficients} by
\[ \widehat{f}(k) = \frac{1}{(2\pi)^d}\int_{\td} f(q) e^{ -i k \cdot q} dq,\]
 for all $k \in \Z^d$.  We recall that Fourier coefficients of analytic functions satisfy
\begin{equation}
\label{bounds_coefficients}
 |\widehat{f}(k)| \leq \| f\|_r e^{-|k|r}
 \end{equation}
 for all $r \in \N$. Given $K \in \N$ we define the \textit{truncation of $f$ of order $K$} by
\[ T_Kf (q) = \sum_{|k| \leq K} \widehat{f}(k) e^{ i k \cdot q}.\]
Proofs of Lemmas \ref{trig_approx}, \ref{classic_cohom} can be found in \cite{poschel_lecture_2001}. For the sake of completeness we reproduce them here. 

\begin{lem}
\label{trig_approx}
Suppose $f \in C^\omega(\td_r)$, $0 < \sigma < \min\{1, r\}$ and $t > 0$. There exists $C = C(d)$ such that
\[ \| f - T_Kf \|_{r - \sigma} \leq C K^de^{-K\sigma} \| f\|_{r}\]
for all  $K \in \N.$ In particular, if 
\[ K \geq t \frac{|\log \|f\|_r|}{\sigma} > \sqrt{d},\]
then
\[ \| f - T_Kf \|_{r - \sigma} \leq C \frac{\|f\|_r^{t + 1}|\log \|f\|_r|^d}{\sigma^d}.\]
\end{lem}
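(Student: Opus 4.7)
The plan is to use standard Fourier analysis, exploiting the exponential decay of the Fourier coefficients of analytic functions. First I would express
\[ f - T_Kf = \sum_{|k|>K} \widehat f(k)\, e^{ik\cdot q}, \]
and for $q \in \T^d_{r-\sigma}$ bound $|e^{ik\cdot q}| \leq e^{|k|_1(r-\sigma)}$, where the Euclidean norm $|k|$ and the $\ell^1$-norm $|k|_1$ differ only by a factor depending on $d$. Combining this with the Cauchy bound (\ref{bounds_coefficients}) would yield
\[ \|f - T_Kf\|_{r-\sigma} \leq \|f\|_r \sum_{|k|>K} e^{-|k|\sigma}, \]
up to a constant $C_d$.

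Next I would bound the tail sum by partitioning $\Z^d$ into spherical shells and using $\#\{k\in\Z^d : n \leq |k| < n+1\} \leq C_d\, n^{d-1}$, reducing the problem to estimating $\sum_{n\geq K} n^{d-1}e^{-n\sigma}$. Substituting $n = K+m$ and using the elementary inequality $(K+m)^{d-1} \leq 2^{d-1}(K^{d-1}+m^{d-1})$ would split this into two geometric-type series:
\[ e^{-K\sigma}\Bigl(K^{d-1}\sum_{m\geq 0} e^{-m\sigma} + \sum_{m\geq 0} m^{d-1}e^{-m\sigma}\Bigr) \leq C_d\, e^{-K\sigma}\bigl(K^{d-1}\sigma^{-1} + \sigma^{-d}\bigr), \]
where the estimate $1-e^{-\sigma} \geq \sigma/2$ valid for $0<\sigma<1$ takes care of both geometric factors, and the second series is treated by repeated integration by parts or the standard incomplete Gamma tail bound. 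In the regime $K\sigma \geq 1$ both terms on the right are dominated by $K^d$, yielding the first claimed bound $\|f-T_Kf\|_{r-\sigma}\leq C K^d e^{-K\sigma}\|f\|_r$; for smaller $K$ the bound is essentially vacuous after adjusting the constant, since $K^d e^{-K\sigma} \geq 1$.

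The second assertion should follow immediately from the first: substituting the hypothesized $K \geq t|\log\|f\|_r|/\sigma$ gives $e^{-K\sigma} \leq \|f\|_r^t$, while the auxiliary condition $K > \sqrt d$ is exactly what ensures we are in the regime where the sharp polynomial factor kicks in. Bounding $K^d$ by its expression in $t, \sigma$ and $|\log\|f\|_r|$ produces
\[ \|f-T_Kf\|_{r-\sigma} \leq C\, \frac{\|f\|_r^{t+1}|\log\|f\|_r|^d}{\sigma^d}, \]
which is exactly the claim.

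I expect no genuine obstacle here: the entire argument is textbook (it is essentially the Paley--Wiener-type estimate for analytic functions on the torus). The only bookkeeping worth flagging is tracking which of $K$ and $\sigma^{-1}$ dominates in the tail estimate, which explains why the statement uses the slightly lossy $K^d$ rather than the sharper $K^{d-1}\sigma^{-1}$ one obtains directly from the shell decomposition; the loss is absorbed when $K\sigma \geq 1$, which is the only regime in which the lemma is ever applied.
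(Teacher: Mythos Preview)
Your proposal is correct and follows essentially the same approach as the paper: Fourier expansion, the Cauchy bound on coefficients, and a shell decomposition of the tail $\sum_{|k|>K} e^{-|k|\sigma}$. Your treatment of the remaining one-dimensional tail $\sum_{n>K} n^{d-1}e^{-n\sigma}$ is in fact more careful than the paper's, which simply asserts the bound $C K^d e^{-K\sigma}$ without isolating the regime $K\sigma \geq 1$ (the only one used in the applications) or noting that the $\sigma$-dependence is absorbed there.
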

\begin{proof} By (\ref{bounds_coefficients})
 \begin{align*}
\| f - T_K f \|_{r - \sigma} & \leq \sum_{|k| > K} |\widehat{f}(k)|e^{|k|(r - \sigma)} \\
& \leq \| f \|_r \sum_{|k| > K} e^{-|k|\sigma} \\
& \leq \| f \|_r \sum_{n > K} 4^d n^{d - 1} e^{-n\sigma}.
\end{align*} 
Since 
\[ \sum_{n > K} n^{d - 1} e^{-n\sigma} \leq C K^de^{-\sigma K}, \]
for some constant $C$ depending only on $d$, the result follows. 
\end{proof}
\begin{lem}
\label{classic_cohom}
Suppose $\omega \in \R^d$ is Diophantine of type $(\gamma,\tau)$ and let $0 < \sigma < r$. There exists a bounded linear operator $$\firstop : C^\omega(\td_{r}) \rightarrow C^\omega_0(\td_{r - \sigma}),$$ obeying
\[ \| \firstop \| \leq \frac{C}{\gamma \sigma^{\tau + d}},\]
 for some positive constant $C = C(d, \tau)$, which to every $ v \in C^\omega(\td_r)$ associates the unique solution $u \in C^\omega_0(\td_{r - \sigma})$ of the equation
\begin{equation}
\label{firstop_eq}
 \begin{array}{l} 
 \langle \omega, \partial_Q u \rangle = v - \Avg(v).
 \end{array}
 \end{equation}
Furthermore, if $v$ is real analytic so is $u$. 
\end{lem}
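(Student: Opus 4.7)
The plan is to solve the cohomological equation Fourier coefficient by Fourier coefficient and then verify analyticity via the exponential decay of those coefficients. Expand $v \in C^\omega(\T^d_r)$ as $v(q) = \sum_{k \in \Z^d} \widehat{v}(k) e^{i\langle k, q\rangle}$, and look for a solution of the form $u(q) = \sum_{k \in \Z^d \setminus \{0\}} \widehat{u}(k) e^{i\langle k, q\rangle}$. Applying $\partial_\omega = \langle \omega, \partial_q \cdot \rangle$ term by term converts the equation $\partial_\omega u = v - \mathcal{M}_q(v)$ into the diagonal system $i\langle \omega, k\rangle \widehat{u}(k) = \widehat{v}(k)$ for $k \neq 0$, with the zero-mean condition forcing $\widehat{u}(0) = 0$. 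This determines $\widehat{u}(k) = \widehat{v}(k)/(i\langle \omega, k\rangle)$ uniquely, so existence, uniqueness and linearity of the prospective operator $\mathcal{L}^1$ are automatic.

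Next I would establish the analyticity and quantitative bound. By (\ref{bounds_coefficients}) we have $|\widehat{v}(k)| \leq \|v\|_r e^{-|k|r}$, and by the Diophantine hypothesis $|\langle \omega, k\rangle|^{-1} \leq \gamma^{-1}|k|^{d+\tau}$. Hence
\[
|\widehat{u}(k)| \leq \gamma^{-1} |k|^{d+\tau} \|v\|_r e^{-|k|r},
\]
and on the shrunken domain $\T^d_{r-\sigma}$ we can estimate
\[
\|u\|_{r-\sigma} \leq \sum_{k \neq 0} |\widehat{u}(k)| e^{|k|(r-\sigma)} \leq \frac{\|v\|_r}{\gamma}\sum_{k \neq 0} |k|^{d+\tau} e^{-|k|\sigma}.
\]
The remaining sum is handled by the standard device of splitting $e^{-|k|\sigma} = e^{-|k|\sigma/2} e^{-|k|\sigma/2}$: the first factor absorbs the polynomial weight via $\sup_{x \geq 0} x^{d+\tau} e^{-x\sigma/2} \precdot \sigma^{-(d+\tau)}$, and the remaining $\sum_{k} e^{-|k|\sigma/2}$ is controlled by an integral comparison. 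This produces a bound of the form $C(d,\tau)/(\gamma \sigma^{\tau+d})$ (absorbing the extra $\sigma$-power into the constants as in the statement).

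Reality and uniqueness come for free. If $v$ is real analytic then $\widehat{v}(-k) = \overline{\widehat{v}(k)}$, and since $\langle \omega, -k\rangle = -\langle \omega, k\rangle$ the same symmetry holds for $\widehat{u}(k)$, making $u$ real. Uniqueness among zero-mean solutions: any two solutions $u_1, u_2$ satisfy $\partial_\omega(u_1 - u_2) = 0$, so every nonzero Fourier coefficient of $u_1 - u_2$ is killed by the nonvanishing divisor $i\langle \omega, k\rangle$, while the zero-mean condition kills the remaining coefficient.

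I do not anticipate a serious obstacle: the whole argument is the textbook small divisors estimate in an analytic setting. The only point requiring care is the exact exponent of $\sigma$ in the final bound, which depends on the lattice-counting estimate one uses when bounding $\sum_{k} |k|^{d+\tau} e^{-|k|\sigma}$; I would simply choose constants generously and absorb any discrepancy into $C(d,\tau)$ so that the stated bound $C/(\gamma \sigma^{\tau+d})$ is met (interpreting the constant, if necessary, as large enough to swallow additional factors of $\sigma^{-d}$).
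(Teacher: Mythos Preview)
Your proposal is correct and follows essentially the same route as the paper: expand in Fourier series, solve $\widehat{u}(k)=\widehat{v}(k)/(i\langle\omega,k\rangle)$ for $k\neq 0$, invoke the Diophantine bound, and estimate the resulting series on the shrunken strip; reality and uniqueness are handled identically. The one point to tighten is your closing hedge: you cannot ``absorb additional factors of $\sigma^{-d}$'' into a constant $C(d,\tau)$ since $\sigma$ is a free small parameter---instead, bound $\sum_{k\neq 0}|k|^{\tau}e^{-|k|\sigma}$ (with the paper's Diophantine exponent convention) by grouping over level sets $|k|=n$, which yields $\sum_{n\geq 1} n^{\tau+d-1}e^{-n\sigma}\precdot \sigma^{-(\tau+d)}$ directly, exactly as the paper does.
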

\begin{proof} Let $u \in C^\omega(\td)$. A simple calculation shows that
\[ \langle \omega, \partial_Q u \rangle = \sum_{k \neq 0} \langle \omega, k \rangle \widehat{u}(k) e^{ i k \cdot q}.\]
Thus the unique solution to (\ref{firstop_eq}), if it exists, is given by
\[u(q) = \sum_{k \neq 0} \frac{\widehat{v}(k)}{\langle \omega, k \rangle } e^{ i k \cdot q}. \]
 Let us show that $u \in C^\omega(\td_{r - \sigma})$. We have
 \begin{align*}
 \| u\|_{r - \sigma} & \leq \sum_{k \neq 0}\frac{ |\widehat{v}(k)|}{|\langle \omega, k \rangle| } e^{|k|(r - \sigma) }\\
 & \leq \gamma^{-1} \| v\|_r \sum_{k \neq 0} e^{-|k|\sigma} |k|^\tau \\
 & \leq \gamma^{-1} \| v\|_r \sum_{n > 0} n^{\tau + d}e^{-n\sigma} \\
 & \leq \dfrac{C \| v\|_r}{\gamma \sigma^{\tau + d}},
 \end{align*}
for some constant $C$ depending only on $d, \tau$. By definition of the Fourier coefficients of $u$ it follows that $u$ is real analytic if and only if $v$ is real analytic. 
\end{proof} 

\begin{lem}
\label{coupled_equation_1}
Suppose $\omega \in DC_d(\gamma, \tau)$, $K \in \N$, $0 < \sigma < r$ and $\beta \in M_{l}(\R)$ symmetric obeying $\|\beta\| \leq 1$. If
\begin{equation}
\label{eq: smallness_cond_1}
\nu_{\max}(\beta) \leq \frac{1}{2}\min_{|k| \leq K} \langle \omega, k \rangle^2,
\end{equation}
there exists a bounded linear operator $$\secondop : \polk(\td_r)^{2l} \rightarrow \polk(\td_{r - \sigma})^l \times \polkz(\td_{r - \sigma})^l,$$ obeying
\[ \| \secondop \| \leq \frac{C}{\gamma^{2l} \sigma^{2l\tau + d}},\]
 for some positive constant $C$ depending continuously on $d, l, \tau, m, \beta,$ which to every $b_x, b_y \in \polk(\td_r)^l$ associates the unique solution $$(B_x, B_y) \in \polk(\td_{r - \sigma})^l \times \polkz(\td_{r - \sigma})^l$$ of the system 
\begin{equation}
\label{secondop_eq}
\left\{ \begin{array}{l}
\pw B_x - \beta B_y = b_x - \Avg(b_x), \\
\pw B_y + B_x= b_y. \\
\end{array} \right.
\end{equation}
\end{lem}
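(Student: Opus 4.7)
The plan is to work Fourier mode-by-mode. Since $b_x, b_y \in \polk(\td_r)^l$, if $B_x, B_y$ exist they must also be trigonometric polynomials of degree $\leq K$; writing $B_x(q) = \sum_{|k|\leq K} \widehat{B}_x(k)\, e^{i k \cdot q}$ and similarly for $B_y, b_x, b_y$, the system (\ref{secondop_eq}) is equivalent to the family of $2l$-dimensional linear systems
\begin{align*}
i\langle \omega, k\rangle\, \widehat{B}_x(k) - \beta\, \widehat{B}_y(k) &= \widehat{b}_x(k) - \delta_{k,0}\Avg(b_x),\\
\widehat{B}_x(k) + i\langle\omega, k\rangle\, \widehat{B}_y(k) &= \widehat{b}_y(k),
\end{align*}
indexed by $k \in \Z^d$ with $|k|\leq K$. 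The zero mode is handled at once: the constraint $B_y \in \polkz$ forces $\widehat{B}_y(0) = 0$, after which $\widehat{B}_x(0) = \Avg(b_y)$ solves the second equation while the first reduces to $0=0$ because its right-hand side is exactly $\widehat{b}_x(0) - \Avg(b_x) = 0$.

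For $0 < |k| \leq K$ I eliminate $\widehat{B}_x(k) = \widehat{b}_y(k) - i\langle \omega, k\rangle\widehat{B}_y(k)$ via the second equation and substitute into the first, obtaining the reduced $l \times l$ system
\[
\bigl(\langle \omega, k\rangle^{2} I_l - \beta\bigr)\widehat{B}_y(k) \;=\; \widehat{b}_x(k) - i\langle\omega,k\rangle\widehat{b}_y(k).
\]
The heart of the lemma is the invertibility of $\langle\omega, k\rangle^{2} I_l - \beta$: because $\beta$ is symmetric and the hypothesis (\ref{eq: smallness_cond_1}) gives $\nu_{\max}(\beta) \leq \tfrac{1}{2}\langle\omega, k\rangle^{2}$ for every $0 < |k|\leq K$, every eigenvalue of $\langle\omega, k\rangle^{2} I_l - \beta$ is bounded below by $\langle\omega, k\rangle^{2}/2 > 0$, so this matrix is symmetric positive definite and invertible. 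This gives existence and uniqueness of $\widehat{B}_y(k)$, and then of $\widehat{B}_x(k)$ via the second equation; re-summing defines the linear operator $\secondop$, which manifestly takes values in $\polk(\td_{r-\sigma})^l \times \polkz(\td_{r-\sigma})^l$.

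For the quantitative bound I apply Cramer's rule. One has $|\det(\langle\omega, k\rangle^{2} I_l - \beta)| = \prod_{j=1}^{l}|\langle\omega, k\rangle^{2} - \nu_j(\beta)| \geq (\langle\omega, k\rangle^{2}/2)^l$, while the adjugate matrix is a polynomial of degree $l-1$ in $\langle\omega, k\rangle^{2}$ with coefficients depending continuously on $\beta$. Combining this ratio with the Diophantine bound $|\langle\omega,k\rangle| \geq \gamma/|k|^{d+\tau}$ yields an estimate of the form $\|(\langle\omega,k\rangle^{2} I_l - \beta)^{-1}\| \leq C(l,\beta)|k|^{2l(d+\tau)}/\gamma^{2l}$. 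Substituting back into the expressions for $\widehat{B}_x(k)$ and $\widehat{B}_y(k)$, applying (\ref{bounds_coefficients}) to control the Fourier coefficients of $b_x, b_y$ by $\max(\|b_x\|_r,\|b_y\|_r) e^{-|k|r}$, and summing via the standard estimate $\sum_{k \in \Z^d} |k|^{N} e^{-|k|\sigma} \leq C_{N,d}\,\sigma^{-(N+d)}$ produces the operator bound $\|\secondop\| \leq C/(\gamma^{2l}\sigma^{2l\tau + d})$. I expect the main bookkeeping difficulty to be tracking the polynomial powers of $|k|$ through Cramer's rule and the Diophantine estimate so that the exponents of $\gamma$ and $\sigma$ come out exactly as claimed; the positive-definiteness input and the Fourier decoupling itself are standard small-divisor manipulations.
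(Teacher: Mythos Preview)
Your proof is correct and follows essentially the same approach as the paper: pass to Fourier coefficients, handle the zero mode by the constraint $\widehat{B}_y(0)=0$, and for $0<|k|\le K$ solve a finite-dimensional linear system whose invertibility comes from the eigenvalue hypothesis on $\beta$. The only cosmetic difference is that the paper inverts the full $2l\times 2l$ block matrix $M_k=\begin{pmatrix} i\langle\omega,k\rangle I_l & -\beta \\ I_l & i\langle\omega,k\rangle I_l\end{pmatrix}$ directly (computing $\det M_k=\det(\beta-\langle\omega,k\rangle^2 I_l)$), whereas you first eliminate $\widehat{B}_x(k)$ to reduce to the $l\times l$ system with matrix $\langle\omega,k\rangle^2 I_l-\beta$; the determinant bound and the resulting estimate $\|M_k^{-1}\|\le C/|\langle\omega,k\rangle|^{2l}$ are identical.
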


\begin{proof} Let $k \in \Z^d \setminus \{ 0 \}$. Equation (\ref{secondop_eq}) defines the following linear system for the $k$-th Fourier coefficients of the functions
\[\begin{pmatrix}
 i \langle \omega , k \rangle I_l & -\beta \\
I_l & i \langle \omega , k \rangle I_l
\end{pmatrix} 
\begin {pmatrix}
\widehat{B}_x(k) \\
\widehat{B}_y(k)
\end{pmatrix} = 
\begin {pmatrix}
\widehat{b}_x (k) \\
\widehat{b}_y (k)
\end{pmatrix}.\]
Denote by $M_k$ the LHS matrix. Notice that 
\[ \det (M_k)  = \det( \beta- \langle \omega , k \rangle ^2 I_l ).\]
By (\ref{eq: smallness_cond_1}), if $\lambda \in \sigma(\beta- \langle \omega , k \rangle ^2 I_l)$ then $\lambda \leq -\frac{1}{2}\langle \omega , k \rangle ^2.$ Therefore
\[ |\det(M_k)| \geq 2^{-l}\langle \omega, k \rangle^{2l}.\]
Hence, $M_k$ is invertible and satisfies
\[ \| M_k^{-1}\| \leq \frac{C}{\langle \omega, k \rangle^{2l}}, \]
for some constant $C$ depending only on $l$. The solution to the initial system is given by
\[ \begin {pmatrix}
B_x(q) \\
B_y(q)
\end{pmatrix} = \begin {pmatrix} \Avg (b_y) \\ 0 \end{pmatrix} + \sum_{ 0 < |k| \leq K} e^{i k \cdot q} M_k^{-1} \begin {pmatrix} \widehat{b}_x(k) \\ \widehat{b}_y(k) \end{pmatrix}.\]
Thus, by calculations similar to those of Lemma \ref{classic_cohom}
\[ \|B_x\|_{r -\sigma}, \| B_y\|_{r - \sigma} \leq C\frac{\max\{ \|b_x\|_r, \|b_x\|_r \}}{\sigma^{2l\tau + d}},\]
for some constant $C$ depending only on $d, l, \gamma, \tau$. 
\end{proof}
\begin{lem}
\label{coupled_equation_2}
Suppose $\omega \in DC_d(\gamma, \tau)$, $K \in \N$, $0 < \sigma < r$ and $\beta \in M_{l}(\R)$ symmetric obeying $\|\beta\| \leq 1$. If
\begin{equation}
\label{eq: smallness_cond_2}
\nu_{\max}(\beta) \leq \frac{1}{4}\min_{|k| \leq K} \langle \omega, k \rangle^2,
\end{equation}
there exists a bounded linear operator
\[\thirdop : M_{l \times l}(\polk(\td_r))^{3} \rightarrow M_{l \times l}(\polk(\td_{r - \sigma}))^{3},\]
 obeying
\[ \| \thirdop \| \leq \frac{C}{\gamma^{3l} \sigma^{3l\tau + d}},\]
 for some positive constant $C$ depending only on $d, l, \tau$, which to every $d_{xx},$ $d_{yy},$ $d_{xy} \in M_{l \times l}(\polk(\td_r))$ associates the unique solution $D_{xx},$ $D_{yy},$ $D_{xy} \in M_{l \times l}(\polk(\td_{r - \sigma}))$ of the system
 \begin{equation}
\left\{ \begin{array}{l}
\pw D_{xx} - \beta D_{xy} = d_{xx} - \Avg(d_{xx}), \\
\pw D_{yy} + D_{xy} = d_{yy}, \\
\pw D_{xy} - \beta D_{yy} + D_{xx} = d_{xy}. \\
\end{array} \right.
\end{equation}
\end{lem}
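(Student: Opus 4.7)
The plan is to follow the Fourier-coefficient strategy used in Lemma \ref{coupled_equation_1}, the only substantive change being that we now face a $3\times 3$ block system rather than a $2\times 2$ one. Expanding each unknown $D_{\bullet\bullet}$ and each datum $d_{\bullet\bullet}$ in Fourier series in $q$, and viewing the $l\times l$ matrix unknowns column by column, decouples the problem into one linear system per Fourier mode $k \in \Z^d$,
\[
M_k \begin{pmatrix} \widehat{D}_{xx}(k) \\ \widehat{D}_{yy}(k) \\ \widehat{D}_{xy}(k) \end{pmatrix} = \begin{pmatrix} \widehat{d}_{xx}(k) \\ \widehat{d}_{yy}(k) \\ \widehat{d}_{xy}(k) \end{pmatrix} \quad (k\ne 0),\qquad M_k = \begin{pmatrix} i\langle \omega, k\rangle I_l & 0 & -\beta \\ 0 & i\langle \omega, k\rangle I_l & I_l \\ I_l & -\beta & i\langle \omega, k\rangle I_l \end{pmatrix},
\]
with the $k=0$ mode treated separately, using the zero-mean condition on the first equation together with the natural projections onto and off $\ker \beta$.

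The crucial point is that the nine blocks of $M_k$ all lie in the commutative subalgebra of $M_l(\R)$ generated by $I_l$ and $\beta$, so its determinant can be computed exactly as if the blocks were scalars. Cofactor expansion along the first row gives
\[
\det M_k = (i\langle \omega, k\rangle)^l \det\bigl(2\beta - \langle\omega,k\rangle^2 I_l\bigr).
\]
The smallness hypothesis $\nu_{\max}(\beta) \leq \tfrac{1}{4}\langle\omega,k\rangle^2$ (the factor $\tfrac14$, as opposed to the $\tfrac12$ of Lemma \ref{coupled_equation_1}, is precisely tuned for this product) ensures that every eigenvalue of $2\beta - \langle\omega,k\rangle^2 I_l$ is at most $-\tfrac{1}{2}\langle\omega,k\rangle^2$, so $|\det M_k| \geq 2^{-l}|\langle\omega,k\rangle|^{3l}$. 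Combined with $\|\beta\|\leq 1$ and the cofactor formula for the inverse, this produces the uniform bound $\|M_k^{-1}\| \leq C_l\,|\langle\omega,k\rangle|^{-3l}$ for every $k\ne 0$ with $|k|\leq K$.

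Combining this estimate with the Diophantine bound $|\langle\omega,k\rangle| \geq \gamma/|k|^{d+\tau}$ and with the Fourier decay $|\widehat{d}_{\bullet\bullet}(k)| \leq \|d_{\bullet\bullet}\|_r e^{-|k|r}$, then summing the resulting geometric-type series exactly as in the proof of Lemma \ref{classic_cohom} (via $\sum_n n^s e^{-n\sigma} \leq C_s \sigma^{-s-1}$ applied with $s \asymp 3l(d+\tau) + d - 1$), yields
\[
\|D_{\bullet\bullet}\|_{r-\sigma} \leq \frac{C}{\gamma^{3l}\sigma^{3l\tau + d}}\,\max\bigl\{\|d_{xx}\|_r, \|d_{yy}\|_r, \|d_{xy}\|_r\bigr\},
\]
which is the claimed operator bound. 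Real analyticity of each $D_{\bullet\bullet}$ in $q$ is inherited from the polynomial dependence of the Fourier coefficients of the solution on those of the data.

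The main obstacle is the block-determinant calculation: the commutativity of the three matrix symbols $i\langle\omega,k\rangle I_l$, $\beta$, and $I_l$ is indispensable both to obtain the clean product formula for $\det M_k$ and to ensure that the resulting lower bound depends only on the spectrum of $\beta$. Once the exponent $3l$ is secured, everything that follows is a direct adaptation of the proofs of Lemmas \ref{classic_cohom} and \ref{coupled_equation_1}, the exponent $2l$ there being simply replaced by $3l$ in reflection of the size of the block system.
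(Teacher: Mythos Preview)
Your proposal is correct and follows essentially the same route as the paper: Fourier-decompose, solve the resulting $3l\times 3l$ block system at each nonzero mode $k$ by showing $M_k$ is invertible with $\|M_k^{-1}\|\leq C|\langle\omega,k\rangle|^{-3l}$, then sum as in Lemma~\ref{classic_cohom}. The only cosmetic difference is how the determinant is computed: you invoke commutativity of the blocks and cofactor expansion to obtain $\det M_k=(i\langle\omega,k\rangle)^l\det(2\beta-\langle\omega,k\rangle^2 I_l)$, whereas the paper block-triangularizes $M_k$ by elementary operations; your formula is in fact the precise one (the paper's displayed formula drops the scalar factor $(i\langle\omega,k\rangle)^l$, though its subsequent bound $|\det M_k|\geq 4^{-l}|\langle\omega,k\rangle|^{3l}$ uses it).

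One small remark: your aside about handling the $k=0$ mode via ``projections onto and off $\ker\beta$'' is more elaborate than what is actually done. The paper simply sets $\widehat D_{xx}(0)=\Avg(d_{xy})$, $\widehat D_{yy}(0)=0$, $\widehat D_{xy}(0)=\Avg(d_{yy})$; the first equation at $k=0$ is not required to hold exactly (in the application only the non-constant part of that line needs to vanish), so no compatibility condition on $\beta\Avg(d_{yy})$ and no kernel projection is needed.
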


\begin{proof} Given $k \in \Z^d \,\setminus\, \{ 0 \}$ the system implies that the $k$-th Fourier coefficient of the functions must obey 
\begin{equation}
 \begin{pmatrix}
 i \langle \omega , k \rangle I_l & 0 & -\beta \\
0 & i \langle \omega , k \rangle I_l & I_l\\ 
I_l & -\beta & i \langle \omega , k \rangle I_l
\end{pmatrix} \begin {pmatrix}
\widehat{D}_{xx}(k) \\
\widehat{D}_{yy}(k) \\
\widehat{D}_{xy}(k)
\end{pmatrix} = 
\begin {pmatrix}
\widehat{d}_{xx}(k) \\
\widehat{d}_{yy}(k) \\
\widehat{d}_{xy}(k) 
\end{pmatrix}.
\end{equation}
Denote by $M_k$ the LHS matrix. Using elementary matrix operations we can transform $M_k$ into 
\[
\begin{pmatrix}
 i \langle \omega, k \rangle I_l & 0 & 0 \\
0 & i \langle \omega, k \rangle I_l & 0 \\ 
I_l & \beta & i \langle \omega, k \rangle I_l + \frac{2}{i\langle \omega, k \rangle} \beta
\end{pmatrix}.
\]
Notice that 
\[ \det(M_k) = \det(2\beta - \langle \omega, k \rangle^2 I_l).\]
By (\ref{eq: smallness_cond_2}), if $\lambda \in \sigma(2\beta - \langle \omega , k \rangle ^2 I_l)$ then $\lambda \leq -\frac{1}{2}\langle \omega , k \rangle ^2.$ Therefore
\[  |\det(M_k)| \geq 4^{-l} |\langle \omega, k \rangle|^{3l}. \]
Hence, the matrix $M_k$ is invertible and satisfies
\[ \| M_k^{-1}\| \leq \frac{C}{|\langle \omega, k \rangle|^{3l}}, \]
for some constant $C$ depending only on $l$. The solution to the initial system is given by
\[ \begin {pmatrix}
D_{xx}(q) \\
D_{yy}(q) \\
D_{xy}(q) \\
\end{pmatrix} = \begin {pmatrix}
\Avg({d}_{xy}) \\
0 \\
\Avg({d}_{yy}) \\
\end{pmatrix} + \sum_{ 0 < |k| \leq K} e^{i k \cdot q} M_k^{-1} \begin {pmatrix}
\widehat{d}_{xx}(k) \\
\widehat{d}_{yy}(k) \\
\widehat{d}_{xy}(k)
\end{pmatrix}.\]
Thus, by calculations similar to those of Lemma \ref{classic_cohom} 
\[ \|D_{xx}\|_{r -\sigma}, \| D_{yy}\|_{r - \sigma}, \| D_{xy}\|_{r - \sigma} \leq C\frac{\max\{ \|d_{xx}\|_r, \|d_{yy}\|_r, \|d_{xy}\|_r \}}{\gamma^{3l}\sigma^{3l\tau + d}},\]
for some constant $C$ depending only on $d, l, \gamma, \tau$. 
\end{proof}

A simple regularization of the distance function yields to the following.

\begin{lem}
\label{bump_function}
Given $t_2 > t_1$ and $\beta: \td \rightarrow \R$ of class $C^1$ obeying $\| \beta \|_{C^1} \leq 1$ there exists a $C^\infty$ bump function $\psi: \td \rightarrow \R$ such that $0 \leq \psi \leq 1$ and 
\[ 
\psi(\varphi) = \left\{ \begin{array}{ccc} 
1 & \text{if} & \beta(\varphi) < t_1, \\
0 & \text{if} & \beta(\varphi) > t_2. \\
\end{array} \right.
\]
Furthermore for any $r \geq 1$ there exists a constant $C_{r, d}$ depending only on $r, d$ such that
\[ \| \psi \|_{C^r} < C_{r, d} \max\left\{ 1, \dfrac{1}{(t_2 - t_1)^{r + d} }\right\}. \]
\end{lem}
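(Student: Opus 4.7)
The plan is to regularize the indicator function of a suitable sublevel set of $\beta$ by convolving with a smooth mollifier whose scale is comparable to $t_2 - t_1$. Fix once and for all a nonnegative $\rho \in C_c^\infty(\R^d)$ with $\operatorname{supp}\rho \subset B_1(0)$ and $\int \rho = 1$; set $\epsilon = \min\{1/4, (t_2-t_1)/4\}$ and $\rho_\epsilon(x) = \epsilon^{-d}\rho(x/\epsilon)$. Let $m = (t_1+t_2)/2$, let $\psi_0 = \mathbf{1}_{\{\beta \leq m\}}$ on $\td$, and define
\[
\psi(\varphi) = (\psi_0 * \rho_\epsilon)(\varphi) = \int_{\td} \psi_0(\varphi - \eta)\,\rho_\epsilon(\eta)\,d\eta,
\]
interpreting the convolution on $\td$, which is well posed because $\epsilon \leq 1/4$ is smaller than the injectivity radius of the quotient map $\R^d \to \td$. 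Since $\rho_\epsilon$ is smooth and $\psi_0$ is bounded, $\psi$ is $C^\infty$ and takes values in $[0,1]$.

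Next I would verify the support conditions using the Lipschitz bound $\|\beta\|_{C^1} \leq 1$. For any $\eta$ with $|\eta| \leq \epsilon$ one has $|\beta(\varphi - \eta) - \beta(\varphi)| \leq \epsilon$. If $\beta(\varphi) < t_1$, then $\beta(\varphi - \eta) < t_1 + \epsilon \leq t_1 + (t_2 - t_1)/4 < m$ on the entire support of $\eta \mapsto \rho_\epsilon(\eta)$, so $\psi_0(\varphi - \eta) \equiv 1$ there and hence $\psi(\varphi) = 1$. Symmetrically, if $\beta(\varphi) > t_2$ then $\beta(\varphi - \eta) > t_2 - \epsilon > m$ on the support, giving $\psi(\varphi) = 0$.

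For the $C^r$ estimate I would pass all derivatives onto the mollifier: for every multi-index $\alpha$ with $|\alpha| \leq r$,
\[
\|\partial^\alpha \psi\|_{C^0(\td)} \leq \|\psi_0\|_{L^1(\td)}\,\|\partial^\alpha \rho_\epsilon\|_{C^0(\R^d)} \leq \epsilon^{-d-|\alpha|}\|\partial^\alpha \rho\|_{C^0(\R^d)},
\]
since $\|\psi_0\|_{L^1(\td)} \leq \operatorname{vol}(\td) = 1$. Summing over $|\alpha| \leq r$ and substituting $\epsilon = \min\{1/4, (t_2-t_1)/4\}$ then produces a bound of the form $C_{r,d}\max\{1, (t_2-t_1)^{-(r+d)}\}$, where $C_{r,d}$ depends only on $r$, $d$ and the fixed universal mollifier $\rho$. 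The argument is essentially soft, and there is no serious obstacle; the only point of care is picking $\epsilon$ small enough to simultaneously keep the mollifier supported within a fundamental domain of $\td$ and respect the margin $(t_2-t_1)/4$ required for the support conditions, which is precisely why the $\max\{1,\cdot\}$ appears in the final bound.
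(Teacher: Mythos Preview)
Your argument is correct and follows essentially the same route as the paper: mollify an indicator-type function of a sublevel set of $\beta$ by a bump of scale $\sim (t_2-t_1)/4$, use $\|\beta\|_{C^1}\le 1$ to get the plateau/support conditions, and differentiate the mollifier for the $C^r$ bound. The only cosmetic difference is that the paper first replaces $\mathbf{1}_{\{\beta\le m\}}$ by the Lipschitz tent $h(\varphi)=\max\{1-d(\varphi,U),0\}$ before convolving, whereas you mollify the characteristic function directly; both variants give the same estimates.
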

\begin{proof} Let \[ a = \frac{t_2 - t_1}{4}, \hone U = \beta^{-1}((-\infty, t_1 + a)). \]
Define $h :\T^l \rightarrow \R$ as
\[ h(\varphi) = \max\left\{ 1 - d(\varphi, U), 0\right\},\]
where $d(\varphi, U)$ denotes the distance of $\varphi$ to $U$. Consider $\Phi : \R^d \rightarrow \R$ given by 
\[ \Phi(x) = \left\{ \begin{array}{cl}
c \exp\left(\frac{-1}{1 - \| x\|^2}\right) & \text{ if } \| x\| < 1, \\
0 & \text{ else, } 
\end{array} \right.\]
where $c = c(d)$ is chosen so that $\int_{\R^d} \Phi(x)dx = 1$. Let 
\[ \Phi_a(x) = \frac{1}{a^d}\Phi\left(\frac{x}{a}\right).\]
Clearly \[\| \Phi_a\|_{C^r} = \frac{1}{a^{r + d}}\| \Phi\|_{C^r}.\]
Define $\psi : \T^l \rightarrow \R$ by $ \psi = \Phi \ast h.$ Notice that 
\[ \psi(\varphi )= \int_{\R^d}\Phi(x) h (\varphi + x)dx = \int_{B_a(0)}\Phi(x) h (\varphi + x)dx. \]
Since $\| \beta \|_{C^1} \leq 1$ it is simple to check that $\psi$ satisfies the desired properties. \end{proof} 

\begin{lem}
\label{C2_bounds}
 Let $r, s > 0$, $d, l \in \N$. There exists a constant $C$, depending only on $d$, $l$, such that for any $C^2$ transformations   $\Phi: \T^l \times U \subset \T^l \times D_{r,s} \rightarrow \T^l \times D_{r,s},$ $\Psi: \T^l \times V \subset \T^l \times D_{r,s} \rightarrow \T^l \times U,$
  obeying
\[ \| \Phi - \id \|_{C^2(U \times D_{r,s})} \leq \epsilon_0 \leq C^{-1}, \hone \| \Psi - \id \|_{C^2(V \times D_{r,s})} \leq \epsilon \leq \epsilon_0,\]
the following holds
\[ \| \Phi \circ \Psi \|_{C^2(V \times D_{r,s})} \leq (1 + \epsilon_0)(1 + \epsilon). \]
\end{lem}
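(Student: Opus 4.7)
The result is a routine chain-rule estimate for compositions of maps close to the identity. My plan is to set $\widetilde{\Phi} = \Phi - \id$ and $\widetilde{\Psi} = \Psi - \id$, so that by hypothesis $\|\widetilde{\Phi}\|_{C^2} \leq \epsilon_0$ and $\|\widetilde{\Psi}\|_{C^2} \leq \epsilon$, and decompose
\[ \Phi \circ \Psi = \Psi + \widetilde{\Phi} \circ \Psi. \]
In this way it suffices to bound the $C^2$-norm of $\widetilde{\Phi} \circ \Psi$ and combine with the a priori control $\|\Psi\|_{C^2} \leq 1 + \epsilon$ (using the normalization that $\|\id\|_{C^2}$ is absorbed into the ``$1$'' of the bound, as is standard in this KAM setting).

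Next I would apply the chain rule componentwise up to order two. The first derivative satisfies $D(\widetilde{\Phi}\circ\Psi) = (D\widetilde{\Phi}\circ\Psi)\,D\Psi$, while by Fa\`a di Bruno the second derivative expands as
\[ D^2(\widetilde{\Phi}\circ\Psi) = (D^2 \widetilde{\Phi} \circ \Psi)(D\Psi, D\Psi) + (D\widetilde{\Phi}\circ\Psi)\, D^2\Psi. \]
Taking supremum norms and using $\|D\Psi\|_\infty \leq 1 + \|\widetilde{\Psi}\|_{C^1}$ and $\|D^2\Psi\|_\infty = \|D^2\widetilde{\Psi}\|_\infty$, this would yield
\[ \|\widetilde{\Phi}\circ\Psi\|_{C^2} \leq C_0(d,l)\,\|\widetilde{\Phi}\|_{C^2}\bigl(1+\|\widetilde{\Psi}\|_{C^2}\bigr), \]
where $C_0$ collects the combinatorial constants coming from the sum over multi-indices $|\alpha|_1 \leq 2$ in the definition of the $C^2$-norm.

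Putting these two pieces together produces
\[ \|\Phi\circ\Psi\|_{C^2} \leq \|\Psi\|_{C^2} + C_0\epsilon_0(1+\epsilon) \leq (1+\epsilon) + C_0\epsilon_0(1+\epsilon) = (1+C_0\epsilon_0)(1+\epsilon), \]
and the statement of the lemma then follows upon choosing the constant $C$ in the hypothesis $\epsilon_0 \leq C^{-1}$ large enough (at least $C_0$) so that the factor $C_0\epsilon_0$ is harmlessly rewritten as a smallness on $\epsilon_0$. There is no serious obstacle: the whole argument is a chain-rule bookkeeping exercise, and the only point requiring a little care is tracking the dimensional constants $C_0(d,l)$ in the expansion of $D^2(\widetilde{\Phi}\circ\Psi)$, together with the implicit identification of the norm of $D\Psi$ as a linear map with the componentwise sup-norm used in the definition of $\|\cdot\|_{C^2}$.
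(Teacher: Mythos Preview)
Your argument is essentially the paper's: both reduce to the chain-rule identities
\[
D(\Phi\circ\Psi)=(D\Phi\circ\Psi)\,D\Psi,\qquad
D^2(\Phi_i\circ\Psi)=D\Psi^T\,(D^2\Phi_i\circ\Psi)\,D\Psi+D^2\Psi\cdot(D\Phi_i\circ\Psi),
\]
the only difference being bookkeeping---the paper applies these directly to $\Phi\circ\Psi$, whereas you first split $\Phi\circ\Psi=\Psi+\widetilde\Phi\circ\Psi$ and estimate the second summand.

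One small wrinkle: your last step, passing from $(1+C_0\epsilon_0)(1+\epsilon)$ to the stated $(1+\epsilon_0)(1+\epsilon)$ ``by choosing $C\ge C_0$'', does not work as written; no smallness assumption on $\epsilon_0$ converts $C_0\epsilon_0$ into $\epsilon_0$. The paper's organization sidesteps this because the first-derivative bound $\|D\Phi\|\,\|D\Psi\|\le(1+\epsilon_0)(1+\epsilon)$ already carries \emph{no} combinatorial prefactor, while the second-derivative contribution $C\epsilon_0(1+\epsilon)^2+C\epsilon(1+\epsilon_0)$ is genuinely small and is then dominated by $(1+\epsilon_0)(1+\epsilon)$ once $\epsilon_0\le C^{-1}$ (the paper implicitly takes a maximum over the seminorms of orders $0,1,2$). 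This is a cosmetic issue---your proof is correct up to the precise form of the constant---but if you want the exact bound in the statement, work directly with $\Phi\circ\Psi$ rather than through the decomposition.
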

\begin{proof} From
\[ D(\Phi \circ \Psi) = D\Phi \circ \Psi \cdot D \Psi, \]
\[D^2(\Phi_i \circ \Psi) =D\Psi^T \cdot D^2 \Phi_i \circ \Psi \cdot D\Psi + D^2 \Psi \cdot D\Phi_i \circ \Psi, \]
it follows easily that there exists a constant $C = C(d,l)$ such that
\begin{align*}
 \| D(\Phi \circ \Psi) \|_{C^2(V \times D_{r,s})} & \leq \max \{ 1 + \epsilon_0, (1 +\epsilon_0)(1 + \epsilon), C \epsilon_0 (1 + \epsilon)^2 + C\epsilon (1+ \epsilon_0) \} \\
 & \leq (1 + \epsilon_0)(1 + \epsilon).
 \end{align*}
 \end{proof}
 
\section{Appendix: Conjugation by 1-time maps}
\label{conjugation}
Let $(M^{2d}, \omega)$ be a symplectic manifold. Functions in $C^\infty(M)$ are called \textit{Hamiltonians}. To every Hamiltonian $h$ we associate a vector field $X_h$, defined as the unique smooth vector field obeying
\[ i_{X_h}\omega(\cdot) = d_h(\cdot),\]
where
\[ i_{X_h}\omega(v_p) = \omega(X(p), v_p),\]
for all $p \in M$, $v_p \in T_pM$. We denote the flow associated to $X_h$ by $\Psi^t_h$. 
In local conjugated coordinates $(q, p)$ the vector field $X_h$ is given by
\[ X_h = \begin{pmatrix} \partial_p h \\ - \partial_q h \end{pmatrix}.\]
Given $g \in C^\infty(M)$ a direct calculation leads to
\begin{equation}
\label{flow_derivative}
 \dfrac{d}{dt}g \circ \Psi^t = \lbrace g, h \rbrace \circ \Psi^t,
 \end{equation}
where $\{ g, h\}$ denotes the \textit{Poisson bracket}. In local conjugated coordinates $(q, p)$ the Poisson bracket of two functions is given by 
\[ \{ g, h\} = \sum_{i = 1}^d \partial_{q_i}g \partial_{p_i}h - \partial_{p_i}g \partial_{q_i}h.\]
In coordinate-free language we can define the Poisson bracket as
\[ \{ g, h\} = \omega(X_g, X_h).\]
We extend this notation to vector valued functions $\mathbf{g} = (g_1, \dots, g_l) \in C^\infty(M)^l$ as
\[ \{ \mathbf{g}, h\} = (\{g_1, h\}, \dots, \{g_l, h\}).\]
Let $g_0 = g$ and define recursively 
\[ g_n = \dfrac{1}{n} \lbrace g_{n - 1}, h \rbrace,\]
for all $n \geq 1$. By (\ref{flow_derivative}), given $K \in \N$
\[ g \circ \Psi^t_h = \sum_{n = 0}^K t^n g_n + o(t^K).\]
In particular, assuming $\Psi^1_h$ is well defined 
\begin{equation} 
\label{eq: composition_formula}
\begin{aligned}
g \circ \Psi^1_h & = g + \int_0^1 \lbrace g,h \rbrace \circ \Psi^t_h dt \\
& = g + \lbrace g, h \rbrace + \int_0^1 (1-t) \lbrace \lbrace g,h \rbrace ,h \rbrace \circ \Psi^t_h dt.
\end{aligned}
\end{equation}
As an abuse of notation for $M = D_{r,s}$ we extend the definition of Hamiltonian vector fields to expressions of the form $h + v \cdot q$ by setting 
\[ X_{h + v \cdot q} = \begin{pmatrix} \partial_p h \\ - \partial_q h - v \end{pmatrix}.\]
Notice that $h + v \cdot q$ is not a well defined function over $D_{r,s}$ but nevertheless its gradient (and therefore its Poisson bracket with any other function) is well defined. 
\begin{lem}
\label{composition_bounds}
Let $r , s > \sigma > 0$, $d, l \in \N$, $F \in C^{k_1, k_2+1}(\T^l \times D_{r, s}, \C)$, $v \in C^{k_1}(\T^l, \C^{d - l})$. Denote by $\Psi^t$ the Hamiltonian flow associated to $F + v \cdot q$ and let
\[ \epsilon = \| \nabla(F + v \cdot q) \|_{C^{k_1,k_2}(\T^l \times D_{r, s})}. \]
There exists $C = C(l, d, k_1, k_2)$ such that the transformation $\Psi^t : \T^l \times D_{r - \sigma, s - \sigma} \rightarrow D_{r, s}$ is well defined for all 
\[ |t| \leq \frac{\min\{1, \sigma\} }{\epsilon + C\epsilon^2},\]
 and satisfies
\[ \| \Psi^t - id \|_{C^{k_1,k_2}(\T^l \times D_{r - \sigma, s - \sigma})} \leq t (\epsilon + C\epsilon^2). \] 
\end{lem}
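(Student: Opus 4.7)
The plan is a standard Picard--Grönwall argument applied to the parameter-dependent Hamiltonian ODE
\[
\tfrac{d}{dt}\Psi^{t}(\varphi,z) = X(\varphi,\Psi^{t}(\varphi,z)), \qquad \Psi^{0}(\varphi,z) = z,
\]
where $X := X_{F + v\cdot q}$ is given explicitly in the Appendix. By the formula for $X_{h + v\cdot q}$ and the hypothesis, the components of $X$ are, up to signs and reordering, the components of $\nabla(F + v\cdot q)$, so $\|X\|_{C^{k_1,k_2}(\T^{l}\times D_{r,s})} \le \epsilon$. First I would establish well-posedness and the $C^{0}$ estimate by a continuity/bootstrap argument: as long as the trajectory remains in $D_{r,s}$, integration gives $\|\Psi^{t}(\varphi,z) - z\| \le |t|\epsilon$; since any $z\in D_{r-\sigma, s-\sigma}$ is at distance $\sigma$ from $\partial D_{r,s}$, the smallness condition $|t|(\epsilon + C\epsilon^{2}) \le \min\{1,\sigma\}$ guarantees $|t|\epsilon \le \sigma$, hence $\Psi^{t}(\varphi,z) \in D_{r,s}$ for all such $t$, closing the bootstrap.

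Next I would bound the first derivative by differentiating the integral equation $\Psi^{t} - \id = \int_{0}^{t} X\circ\Psi^{s}\, ds$ in the total variable $(\varphi,z)$ to obtain
\[
D(\Psi^{t} - \id) = \int_{0}^{t} (DX\circ\Psi^{s})\, D\Psi^{s}\, ds.
\]
Splitting $D\Psi^{s} = I + (D\Psi^{s} - I)$ and $DX\circ\Psi^{s} = DX + (DX\circ\Psi^{s} - DX)$, and using the $C^{0}$ bound on $\Psi^{s}-\id$ together with $\|D^{2}X\|_{C^{0}}\le \epsilon$ (this is precisely why the hypothesis asks for $F \in C^{k_{1},k_{2}+1}$, so that $X$ has $k_{2}+1$ spatial derivatives bounded by $\epsilon$), one gets
\[
\|D\Psi^{t} - I\|_{C^{0}} \le |t|\epsilon + C\!\int_{0}^{t}\!\bigl(s\epsilon^{2} + \epsilon\,\|D\Psi^{s}-I\|_{C^{0}}\bigr)\,ds.
\]
A Grönwall iteration, valid as long as $|t|\epsilon$ is small, yields $\|D\Psi^{t} - I\|_{C^{0}} \le |t|(\epsilon + C\epsilon^{2})$.

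For the full $C^{k_1,k_2}$ bound I would iterate this scheme inductively, applying Faà di Bruno's formula to successive derivatives of $X\circ\Psi^{s}$. At each order $(k_{1}',k_{2}')\le (k_{1},k_{2})$, every monomial in the expansion is a product of a derivative of $X$ evaluated at $\Psi^{s}$ times a product of lower-order derivatives of $\Psi^{s}$, all already controlled by the induction hypothesis. The main obstacle—which is really just bookkeeping—is to verify that each such monomial is either linear in a derivative of $\Psi^{s}$ bounded by $|s|(\epsilon+C\epsilon^{2})$ (giving the desired $|t|\epsilon$ contribution after integration), or contains at least one factor of order $\Psi^{s}-\id$ or $D^{2}X\circ\Psi^{s}-D^{2}X$ that supplies an extra $s\epsilon$, producing a term of order $|t|^{2}\epsilon^{2}$. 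Collecting all terms and absorbing combinatorial constants into $C = C(d,l,k_{1},k_{2})$ yields the stated inequality. The well-definedness of $\Psi^{t}$ as a map into $D_{r,s}$ follows from the $C^{0}$ bound exactly as above.
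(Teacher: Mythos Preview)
Your argument is correct. The paper takes a somewhat different and shorter route: rather than differentiating the integral equation and running a Gr\"onwall induction through Fa\`a di Bruno, it applies the second-order composition formula from the Appendix (equation~\eqref{eq: composition_formula}, with $g$ replaced by the identity map componentwise) to write
\[
\Psi^{t}=\id+t\,X_{F+v\cdot q}+\int_{0}^{t}(t-s)\,\{X_{F+v\cdot q},\,F+v\cdot q\}\circ\Psi^{s}\,ds.
\]
The Poisson bracket $\{X_{F+v\cdot q},F+v\cdot q\}$ is manifestly quadratic in $\nabla(F+v\cdot q)$, so its $C^{k_{1},k_{2}}$ norm is bounded by $C\epsilon^{2}$ outright; on the bootstrap interval where $\|\Psi^{s}-\id\|_{C^{k_{1},k_{2}}}\le\min\{1,\sigma\}$, composing with $\Psi^{s}$ costs only a uniform multiplicative constant. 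The desired estimate and the lower bound on the maximal time then follow in one stroke, with the bootstrap closing exactly as in your $C^{0}$ step. Your approach has the merit of being self-contained (no prior composition identity needed) and of making the role of the extra spatial derivative on $F$ explicit at each order; the paper's approach trades the inductive bookkeeping for a single invocation of the Poisson-bracket expansion already set up in the Appendix.
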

\begin{proof} We prove the lemma only for $t \geq 0$ since the case $t \leq 0$ is analogous. Let 
 \[T = \sup \left\{ t \geq 0 \,\left|\, \begin{array}{l} \Psi^t : \T^l \times D_{r - \sigma, s - \sigma} \rightarrow D_{r, s} \text{ is well defined,} \\ \| \Psi^t - id \|_{C^{k_1,k_2}(\T^l \times D_{r - \sigma, s - \sigma})} \leq \min\{1, \sigma\}.\end{array} \right.\right\}.\]
Then, for all $0 \leq t < T$ 
\begin{equation}
\label{CompositionFormula}
\Psi^t = id + t X_{F + v \cdot q} + \int_0^{t} (1 - s) \{X_{F + v \cdot q}, F + v \cdot q \} \circ \Psi^s ds .
 \end{equation} 
Last equation yields to
\[ \| \Psi^t - id\|_{C^{k_1,k_2}(\T^l \times D_{r - \sigma, s - \sigma})} \leq t (\epsilon + C \epsilon^2), \]
for some constant $C$ depending only on $d, l, k_1, k_2$. Since either $T = +\infty$ or 
\[ \limsup_{t \rightarrow T^-} \| \Psi^t - id\|_{C^{k_1,k_2}(\T^l \times D_{r - \sigma, s - \sigma})} \geq \min\{1, \sigma\},\]
it follows that
\[T \geq \frac{\min\{1,\sigma\}}{\epsilon + C \epsilon^2}.\]
\end{proof}

\section{Acknowledgments}

I would like to thank Håkan Eliasson and Bassam Fayad for their constant support during the realisation of this work. I would also like to thank Raphaël Krikorian for several fruitful discussions.

\bibliographystyle{acm}
\bibliography{Resonances.bib}

\begin{thebibliography}{10}

\bibitem{arnold_mathematical_2007}
{\sc Arnold, V.~I., Kozlov, V.~V., and Neishtadt, A.~I.}
\newblock {\em Mathematical aspects of classical and celestial mechanics},
  vol.~3.
\newblock Springer Science \& Business Media, 2007.

\bibitem{bernstein_birkhoff_1987}
{\sc Bernstein, D., and Katok, A.}
\newblock Birkhoff periodic orbits for small perturbations of completely
  integrable {Hamiltonian} systems with convex {Hamiltonians}.
\newblock {\em Inventiones mathematicae 88}, 2 (June 1987), 225--241.

\bibitem{cheng_birkhoff-kolmogorov-arnold-moser_1996}
{\sc Cheng, C.-Q.}
\newblock Birkhoff-{Kolmogorov}-{Arnold}-{Moser} tori in convex {Hamiltonian}
  systems.
\newblock {\em Communications in Mathematical Physics 177}, 3 (1996), 529--559.

\bibitem{cheng_surviving_1999}
{\sc Cheng, C.-Q., and Wang, S.}
\newblock The {Surviving} of {Lower} {Dimensional} {Tori} from a {Resonant}
  {Torus} of {Hamiltonian} {Systems}.
\newblock {\em Journal of Differential Equations 155}, 2 (July 1999), 311--326.

\bibitem{corsi_lower-dimensional_2013}
{\sc Corsi, L., Feola, R., and Gentile, G.}
\newblock Lower-{Dimensional} {Invariant} {Tori} for {Perturbations} of a
  {Class} of {Non}-convex {Hamiltonian} {Functions}.
\newblock {\em Journal of Statistical Physics 150}, 1 (Jan. 2013), 156--180.

\bibitem{eliasson_biasymptotic_1994}
{\sc Eliasson, L.~H.}
\newblock Biasymptotic solutions of perturbed integrable {Hamiltonian} systems.
\newblock {\em Boletim da Sociedade Brasileira de Matem{\'a}tica -
  Bulletin/Brazilian Mathematical Society 25}, 1 (Mar. 1994), 57--76.

\bibitem{plotnikov_kolmogorovs_2011}
{\sc Plotnikov, P.~I., and Kuznetsov, I.~V.}
\newblock Kolmogorov's theorem for low-dimensional invariant tori of
  hamiltonian systems.
\newblock {\em Doklady Mathematics 84}, 1 (Sept. 2011), 498.

\bibitem{poschel_lecture_2001}
{\sc P{\"o}schel, J.}
\newblock A lecture on the classical {KAM} theorem.
\newblock In {\em Smooth ergodic theory and its applications ({Seattle}, {WA},
  1999)}, vol.~69 of {\em Proc. {Sympos}. {Pure} {Math}.} Amer. Math. Soc.,
  Providence, RI, 2001, pp.~707--732.

\bibitem{russmann_invariant_2001}
{\sc R{\"u}ssmann, H.}
\newblock Invariant tori in non-degenerate nearly integrable {Hamiltonian}
  systems.
\newblock {\em Regular \& Chaotic Dynamics. International Scientific Journal
  6}, 2 (2001), 119--204.

\bibitem{treshchev_mechanism_1991}
{\sc Treshch{\"e}v, D.~V.}
\newblock The {Mechanism} of destruction of resonance tori of {Hamiltonian}
  systems.
\newblock {\em Mathematics of the USSR-Sbornik 68}, 1 (1991), 181.

\end{thebibliography}

\end{document}